\documentclass[onefignum,onetabnum]{siamart250211}

\usepackage{mathtools,amsmath,amssymb,mathrsfs,amsfonts}
\usepackage{epsfig}
\usepackage{leftidx}
\usepackage{graphicx}
\usepackage{amsfonts}
\usepackage{color,epstopdf}
\usepackage{cite}
\usepackage{graphicx,wrapfig,tabularx}
\usepackage{flafter}
\usepackage{fancyhdr}
\usepackage{stmaryrd}
\usepackage{graphicx,subfigure}
\usepackage{multicol,multirow,booktabs}
\usepackage{booktabs,threeparttable}
\usepackage[center]{caption2}
\usepackage{epstopdf}
\usepackage [latin1]{inputenc}
\usepackage{multirow}
\usepackage{enumerate}
\usepackage{enumitem}
\usepackage{algpseudocode,algorithm,algorithmicx}
\usepackage{makecell}
\ifpdf
  \DeclareGraphicsExtensions{.eps,.pdf,.png,.jpg}
\else
  \DeclareGraphicsExtensions{.eps}
\fi

\newsiamremark{remark}{Remark}
\newsiamremark{hypothesis}{Hypothesis}
\crefname{hypothesis}{Hypothesis}{Hypotheses}
\newsiamthm{claim}{Claim}

\newtheorem{example}{Example}[section]

\def\ck{{\scriptstyle{K}}}
\def\cm{{\scriptstyle{M}}}

\def\Ck{{{\mathtt{K}}}}
\newcommand{\ze}{\mathrm{e}}

\newcommand{\myvec}[1]{\boldsymbol{#1}}
\newcommand{\zd}{\,\mathrm{d}}

 \newcommand{\absb}[1]{\big|#1\big|}

 \newcommand{\bra}[1]{\left(#1\right)}
\newcommand{\brab}[1]{\big(#1\big)}
\newcommand{\braB}[1]{\Big(#1\Big)}
\newcommand{\brat}[1]{(#1)}
\newcommand{\kbra}[1]{\left[#1\right]}
\newcommand{\kbrab}[1]{\big[#1\big]}

\newcommand{\myinner}[1]{\left\langle#1\right\rangle}
\newcommand{\myinnert}[1]{\langle#1\rangle}
\newcommand{\myinnerb}[1]{\big\langle#1\big\rangle}
\newcommand{\myinnerB}[1]{\Big\langle#1\Big\rangle}
\newcommand{\mynorm}[1]{\left\|#1\right\|}
\newcommand{\mynormb}[1]{\big\|#1\big\|}

\newcommand{\mynormt}[1]{\|#1\|}

\usepackage{cite}

\headers{Long-time stability of IERK methods}{H.-L. Liao, X.-M. Wang, X.-P. Wang and C. Wen}

\title{Long-time stability of implicit-explicit Runge-Kutta methods for two-dimensional incompressible flows\thanks{Submitted to the editors \today.
		\funding{This work is supported by the National Natural Science Foundation of China under grants 12471383, 12271237, 11731006 and 12501551, Basic Research Program of Jiangsu Province under grant BK20252027 and BK20251395, and	Ministry of Education Key Laboratory of NSLSCS under grant 202501.}}}

\author{ Hong-lin Liao\thanks{Corresponding author. ORCID 0000-0003-0777-6832. School of Mathematics, Nanjing University of Aeronautics and Astronautics, Nanjing 211106, China;
		Key Laboratory of Mathematical Modeling and High Performance Computing of Air Vehicles (NUAA), MIIT, Nanjing 211106, China. (\email{liaohl@nuaa.edu.cn}, \email{liaohl@csrc.ac.cn})}
		\and 
		Xiaoming Wang\thanks{School of Mathematical Sciences, Eastern Institute of Technology, Ningbo, Ningbo 315200, China. (\email{wxm.math@outlook.com})}
		\and
		Xuping Wang\thanks{School of Science and Engineering, The Chinese University of Hong Kong, Shenzhen 518172, China. (\email{wangxuping@cuhk.edu.cn}).}
		\and
		Cao Wen\thanks{School of Mathematics, Nanjing University of Aeronautics and Astronautics, Nanjing 211106, China. (\email{wencao@nuaa.edu.cn})}
		}

\usepackage{amsopn}

\begin{document}
  
\maketitle
  
\begin{abstract}
	High-order adaptive time-stepping algorithms are of significant practical value and theoretical interest for accelerating long-time fluid-flow simulations and resolving complex dynamical behaviors. While several high-order implicit-explicit schemes have been proposed in the literature, their long-time stability properties remain largely unexplored.				
	We develop a family of long-time stable implicit-explicit Runge-Kutta (IERK) methods, up to fourth-order temporal accuracy, for the two-dimensional incompressible Navier-Stokes equations in vorticity-stream function formulation. By combining a convolution-type H\"{o}lder inequality with a damping-type multistage Gr\"{o}nwall inequality, we establish a unified analytical framework that proves long-time stability in both the $L^2$ and $H^1$ norms. A key component of the analysis is a mathematical-induction argument that ensures stage-wise boundedness of the vorticity in the $H^\delta$ norm for some $\delta>0$.
	
	To the best of our knowledge, this is the first work to establish large-time stability results for high-order IERK algorithms for the two-dimensional incompressible Navier-Stokes equations. 
	Our IERK schemes employ stiffly accurate diagonally implicit Runge-Kutta  approximations for the linear diffusive term together with explicit Runge-Kutta approximations for the nonlinear advection term. By exploiting the specific structure of the Navier-Stokes model, we derive a reduced set of order conditions-requiring only 5 and 11 conditions for the third- and fourth-order methods, respectively, in contrast to the classical 6 and 18-allowing the construction of a parameterized family of efficient schemes. These IERK methods are particularly well suited for adaptive time-stepping, as they permit significantly enlarged step sizes in actual computations.
	
	Also, a new strategy of adaptive time-stepping with local delay and local backtrack is designed to accurately resolve the small-scale chaotic or high-frequency quasi-periodic  behaviors and efficiently accelerate the large-scale low-frequency periodic motions. Numerical experiments are provided to corroborate the theoretical findings.
	\\[1ex]   
	\textsc{Keywords:} two-dimensional incompressible flow, implicit-explicit Runge-Kutta methods, long-time stability, damping-type multi-stage Gr\"{o}nwall inequality,  simplified  order conditions
	\\[1ex]
	\emph{AMS subject classifications}: 65M12, 65M50, 76B47, 76D05  
\end{abstract}
  
\section{Introduction}
\setcounter{equation}{0}

We construct  high-order  stable implicit-explicit 
Runge-Kutta (IERK) approximations to accelerate the long-time simulations of incompressible Navier-Stokes equation (INSE) in the vorticity-stream function form \cite{ConstantinFoias:1988, FoiasManleyRosaTemam:2001,HillSuli:2000,Tachim-Medjo:1996,Temam:1997}
\begin{align}
&\,	\partial_t\omega+\myvec{u}\cdot\nabla \omega=\nu\Delta\omega+g,\label{cont: INSE-vorticity}\\
	&\,-\Delta\psi=\omega,\quad
	\myvec{u}=(\partial_y\psi,-\partial_x\psi),\label{cont: INSE-velocity}
\end{align}
where $\omega$ is the vorticity, $\psi$ is the stream function, $\myvec{u}:=\bra{u,v}$ is the velocity, $g$ is a given external force and $\nu$ 
is the coefficient of the kinematic viscosity. The divergence-free condition is always fulfilled in the form \eqref{cont: INSE-velocity} due to the fact $\nabla\cdot\myvec{u}=\partial_x\partial_y\psi-\partial_y\partial_x\psi=0$. 
For simplicity, we consider periodic boundary condition so that the domain is a 2-D torus $\mathbb{T}^2$  and assume that all functions have mean zero
over the torus.
It is known that the solution is analytic (Gevrey class regular \cite{FoiasTemam:1998JFA}) in space for a smooth source term $g$.  Therefore, spectral methods are clearly a natural choice for spatial discretization \cite{ShenTangWang:2011Spectral}. We apply Fourier pseudo-spectral method to approximate the spatial operators $\partial_x$, $\partial_y$, $\nabla$ and $\Delta$, as described in the next section, with the associated discrete operators (matrices) $\mathcal{D}_x$, $\mathcal{D}_y$, $\nabla_h$ and $\Delta_h$, respectively. Of course, there is also other spatial discretization method such as discontinuous Galerkin method\cite{LiuShu:2000, NguyenPeraireCockburn: 2011}. These works focus on the influence of high-order spatial discretization methods on the dynamic behaviors of INSE with high Reynolds numbers, which is not the focus of this work.  

The INSE in the vorticity-stream function form \eqref{cont: INSE-vorticity}-\eqref{cont: INSE-velocity} exhibits excellent long-time stability. In particular, the enstrophy $\frac{1}{2}\|\omega\|^2$ and the palinstrophy remain uniformly bounded for all $t>1$ for square integrable initial data, so the system possesses a global attractor (including large-scale low-frequency quasi-periodic motions and abundant small-scale  chaotic behaviors or high-frequency turbulent fluctuations) and invariant measures provided that the source term $g$ is square integrable in space and time independent \cite{ConstantinFoias:1988, FoiasManleyRosaTemam:2001,Temam:1997}. Consequently, in practical numerical simulations it is essential that numerical schemes designed for the dissipative system should inherit the dissipative nature of the original model and be able to capture the long-time dynamical behaviors accurately in the sense of convergence to the global attractor and invariant measures, cf.  \cite{ChenGunzburgerSunWang:2013,ChenGunzburgerSunWang:2016,ChengWang:2016,ChengWangxiaoming:2008,GottliebToneWangWangWirosoetisno:2012,HillSuli:2000,StuartHumphries:1998,ToneWirosoetisno:2006,Wang:2010,Wang:2012,Wang:2016}.  In the case when the long time dynamics is of interest, it is of great importance to develop efficient and long time stable numerical schemes, leading to the development of linearly semi-implicit or implicit-explicit schemes where the nonlinear term is treated explicitly for computational efficiency in order to avoid inner iterations at each time level.  For instance, \cite{GottliebToneWangWangWirosoetisno:2012} investigated the long time stability of the implicit-explicit Euler scheme 
\begin{align}\label{scheme: BDF1-vorticity}
	D_1\omega^{n}+\myvec{u}^{n-1}\cdot\nabla_h \omega^{n-1}=\nu\Delta_h\omega^{n}+g^{n},
\end{align}
where $D_1\omega^{n}:=(\omega^{n}-\omega^{n-1})/\tau$. It was proven that the first-order method is long time stable in the $L^2$ and $H^1$ norms, and that the global attractor as well as the invariant measures of the scheme converge to those of the INSE at vanishing time step. In addition, one of the authors of this work  investigated the second-order implicit-explicit schemes \cite{Wang:2012} by combining a second order backward-differentiation
for the diffusion term and an explicit second-order Adams-Bashforth/Gear's extrapolation treatment for the nonlinear advection term.
\begin{align}\label{scheme: BDF2-vorticity}
	D_2\omega^{n}+(2\myvec{u}^{n-1}-\myvec{u}^{n-2})\cdot\nabla_h (2\omega^{n-1}-\omega^{n-2})=\nu\Delta_h\omega^{n}+g^{n},
\end{align}
where  $D_2\omega^{n}:=(\frac32\omega^{n}-2\omega^{n-1}+\frac12\omega^{n-2})/\tau$
is the second-order backward differentiation formula. The uniform-in-time $L^2$, $H^1$ and $H^2$
bounds of the numerical solution was established in \cite{Wang:2012} so that a statistical convergence
becomes available. The same technique was applied to the double diffusive model in \cite{ToneWangWirosoetisno:2015}. To accelerate the long-time simulations of \eqref{cont: INSE-vorticity}-\eqref{cont: INSE-velocity}, high-order time approximations are desirable. By employing a  skew symmetric form for the nonlinear advection term, the so-called Temam technique, at the discrete level,
\begin{align}\label{discrete: skew symmetric form}
	\mathtt{c}_h(\myvec{u},\omega):=\frac{1}2\myvec{u}\cdot\nabla_h \omega+\frac{1}2\nabla_h \cdot(\myvec{u}\omega),
\end{align}
Cheng and Wang \cite{ChengWang:2016} studied the following high-order implicit-explicit multistep schemes proposed in \cite{GottliebWang:2012}, up to fourth-order accuracy, for the vorticity equation \eqref{cont: INSE-vorticity},
\begin{align}\label{scheme: Adams-Moulton-vorticity}
	\frac{\omega^{n}-\omega^{n-1}}{\tau}+\sum_{j=0}^{n-1}b_{n-j}^{(k)}\mathtt{c}_h(\myvec{u}^{j},\omega^{j})=
	\nu\sum_{j=0}^{n}d_{n-j}^{(k)}\Delta_h\omega^{j}+\tilde{g}^{n}\quad\text{for $k=2,3,4$,}
\end{align}
where $\tilde{g}^n=\frac{1}{\tau}\int_{t_{n-1}}^{t_{n}} g(\cdot,\tau) d\tau$ and $b_{j}^{(k)}$ are the  Adams-Bashforth coefficients, while $d_{j}^{(k)}$ are the special Adams-Moulton coefficients on a stretched stencil such that 
\begin{align}\label{scheme: first-coefficient dominant}
	d_{0}^{(k)}>\sum_{j=0}^{n-1}\absb{d_{n-j}^{(k)}}\quad\text{for $k=2,3,4$.}
\end{align} 
By taking advantage of the first-coefficient domination  condition \eqref{scheme: first-coefficient dominant}, the long-time stability in the $L^2$ and $H^m$ $(m\ge1)$ norms was established for the implicit-explicit multistep schemes \eqref{scheme: Adams-Moulton-vorticity} by a mathematical induction to the boundedness of vorticity in the $H^\delta$ norm for some $\delta>0$. 

Another venue to enhance efficiency is time-adaptivity so that relatively large time step can be utilized when the evolution of the solution is relatively slow, and small time step is used when the evolution is relatively fast, cf. Figures \ref{fig: ex1-Radau1}-\ref{fig: ex1-Radau1-tau=0.1}. While time-adaptivity alone is relatively straightforward, the concurrent maintenance of long time stability of the solution to the scheme is highly nontrivial.  Indeed, no long-time stable variable-step version of the high-order implicit-explicit multistep schemes \eqref{scheme: BDF2-vorticity} and \eqref{scheme: Adams-Moulton-vorticity} are available so far.
Therefore, implicit-explicit time approximations allowing variable time-step sizes, such as the non-uniform implicit-explicit multi-step methods and Runge-Kutta (RK) methods, are highly desirable for the INSE \eqref{cont: INSE-vorticity}-\eqref{cont: INSE-velocity}.

In recent years, some variable-step multi-step algorithms  were developed and analyzed for incompressible fluid dynamics, see \cite{ArchillaNovo:2022,BoisneaultDubuisPicasso:2023, DeCariaSchneier:2021,WangWangMao:2022, ChuWangWangZhang: 2023} and the references therein. However, due to the non-A-stable property of high-order variable-step multi-step schemes and the additional degrees of freedom from historical time-steps, the stability and convergence analysis of such high-order time discretizations are always theoretically challenging, also see
\cite{LiLiao:2022,LiaoZhang:2021}. 
The existing theoretical results always focus on the stability and convergence on a fixed time interval, while the long-time stability analysis (especially, the asymptotic decaying estimates of solution) for high-order variable-step multi-step schemes seems rare (maybe, missing) in the literature. 
This issue will be  addressed in a forthcoming report; and in this article, we aim to establish the long-time stability of a class of IERK methods for the INSE \eqref{cont: INSE-vorticity}-\eqref{cont: INSE-velocity}. 

The current work is inspired by some recent developments \cite{ChenHuangLiaoYi:2026cicp,LiaoWangWen:2024IERK,LiaoWangWen:2024JCP,LiaoTangWangZhou:2025R-IERK,WangZhaoLiao:2025} on the construction and stability analysis of computationally efficient RK methods preserving the original energy dissipation law of nonlinear gradient flows. The single-step nature of RK methods makes them particularly suitable for designing time adaptive algorithms for nonlinear stiff equations, cf. \cite{AscherRuuthSpiteri:1997,BoscarinoPareschiRusso:2017,BoscarinoRusso:2009,CalvoFrutosNovo:2001,CardoneJackiewiczSanduZhang:2014MMA,DimarcoPareschi:2013,FuTangYang:2024,GuesmiGrotteschiStiller:2023,IzzoJackiewicz:2017,KanevskyCarpenterGottliebHesthaven:2007,KennedyCarpenter:2003,LiuZou:2006,PareschiRusso:2005} and the references therein. Meanwhile, to the best of our knowledge, this is the first time such long-time stability of IERK methods, up to fourth-order accuracy,  is addressed for the INSE.  The main theoretical tools include a convolution-type H\"{o}lder inequality (see Lemma \ref{lemma: bound quadratic form}), a damping-type  Gr\"{o}nwall inequality for multi-stage methods (see Lemma \ref{lemma: decaying gronwall multi-stage RK}) and a unified discrete energy argument (cf. \cite{LiaoTangWangZhou:2025R-IERK,WangZhaoLiao:2025}) of the internal stability   with a complete mathematical induction to the stage-wise boundedness of vorticity in the $H^\delta$ norm. 
Unlike our recent works \cite{LiaoWangWen:2024IERK,LiaoTangWangZhou:2025R-IERK} where only schemes of order less than four are explicitly developed, 
a class of fourth-order IERK methods is designed by simplifying the order conditions, see Table \ref{table: order condition} and more details in Section \ref{appendix: order conditions} of the supplementary material.

Consider a nonuniform time mesh $0=t_0<t_1<\cdots<t_N=T$ with the time-step $\tau_n=t_n-t_{n-1}$ for $n\ge1$ and the maximum time step $\tau_{\max} =\max_{n\geq 1} \tau_n$.	
For an $s$-stage IERK method, let $\omega_h^{n,i}$, $\psi_h^{n,i}$ and $\myvec{u}_h^{n,i}$ be the numerical approximation   of  the vorticity $\omega$, the stream function $\psi$ and the velocity $\myvec{u}$, respectively, at the time stage $t_{n,i}:=t_{n-1}+c_{i}\tau_n$ with the abscissas $c_1:=0$, $c_i>0$ for $2\le i\le s-1$, and $c_{s}:=1$.  Always, the number of implicit stages is denoted by $s_{\mathrm{I}}:=s-1$. 
To integrate the INSE \eqref{cont: INSE-vorticity}-\eqref{cont: INSE-velocity} from $t_{n-1}$ ($n\ge1$) to the next grid point $t_n$, we consider the following $s$-stage stiffly-accurate IERK method:  with the stage starting values $\myvec{u}^{n,1}_h:=\myvec{u}^{n-1}_h$ and $\omega^{n,1}_h:=\omega^{n-1}_h$ for $n\ge1$, to seek $\myvec{u}^{n}_h:=\myvec{u}^{n,s}_h$ and $\omega^{n}_h:=\omega^{n,s}_h$ such that
\begin{align}	
	&\,\omega_h^{n,i+1}-\omega_h^{n,1}
	+\tau_n\sum_{j=1}^{i}\hat{a}_{i+1,j}\mathtt{c}_h(\myvec{u}_h^{n,j},\omega_h^{n,j})
	=
	\nu\tau_n\sum_{j=2}^{i+1}a_{i+1,j}\brab{\Delta_h\omega_h^{n,j}+g^{n,j}},\label{scheme: general IERK-vorticity}\\
	&\,-\Delta_h\psi_h^{n,i+1}=\omega_h^{n,i+1},\quad
	\myvec{u}_h^{n,i+1}=\brat{\mathcal{D}_y\psi_h^{n,i+1},-\mathcal{D}_x\psi_h^{n,i+1}},\label{scheme: general IERK-velocity}
\end{align}
for $1\le i\le s_{\mathrm{I}}$. 
At the time stage $t_{n,i+1}$, once the velocity $\myvec{u}_h^{n,j}$ and the vorticity $\omega_h^{n,j}$ $(1\le j\le i)$ are available, we compute the vorticity  $\omega_h^{n,i+1}$ via the IERK procedure \eqref{scheme: general IERK-vorticity}, obtain the stream function $\psi_h^{n,i+1}$ and the velocity $\myvec{u}_h^{n,i+1}$ via the Poisson equation in \eqref{scheme: general IERK-velocity}. The associated Butcher tableaux of \eqref{scheme: general IERK-vorticity} reads
\begin{align*}
	&\begin{array}{c|c}
		\mathbf{c} & A \\
		\hline\\[-8pt] 	& \mathbf{b}^T
	\end{array}
	=	\begin{array}{c|ccccc}
		c_{1} & 0 &  &  &  &   \\
		c_{2} & 0& a_{22} &  &  &   \\
		c_{3} & 0 & a_{32} & a_{33} &  &   \\
		\vdots & \vdots & \vdots & \ddots & \ddots &  \\[2pt]
		c_{s} & 0 & a_{s,2} &  \cdots  & a_{s,s-1}   & a_{s,s} \\[2pt]
		\hline  & 0 & a_{s,2} &  \cdots  & a_{s,s-1}   & a_{s,s}
	\end{array},\begin{array}{c|c}
		\mathbf{c} & \widehat{A} \\
		\hline\\[-8pt] 	& \hat{\mathbf{b}}^T
	\end{array}
	=\begin{array}{c|ccccc}
		c_{1} & 0 &  &  &  &   \\
		c_{2} & \hat{a}_{21} & 0 &  &  &   \\
		c_{3} & \hat{a}_{31} & \hat{a}_{32} & 0 &  &   \\
		\vdots & \vdots & \vdots & \ddots & \ddots &  \\[2pt]
		c_{s} & \hat{a}_{s,1} & \hat{a}_{s,2} &  \cdots  & \hat{a}_{s,s-1}   & 0 \\[1pt]
		\hline\\[-8pt]  & \hat{a}_{s,1} & \hat{a}_{s,2} &  \cdots  & \hat{a}_{s,s-1}   & 0
	\end{array}.
\end{align*}

\begin{table}[htb!]\centering
	\begin{threeparttable}
		\centering 
		\renewcommand\arraystretch{1.3}
		\belowrulesep=0pt\aboverulesep=0pt
		\caption{Order conditions for IERK methods up to fourth-order accuracy.}
		\label{table: order condition}
		\vspace{2mm}
		\begin{tabular}{c|cc|c}
			\toprule 
			\multirow{2}*{Order} & \multicolumn{2}{c|}{Stand-alone conditions} & Coupling condition \\
			\cmidrule{2-4}
			& Implicit part & Explicit part & \\
			\midrule 
			1 & $\mathbf{b}^{T} \mathbf{1}=1$ & $\hat{\mathbf{b}}^{T} \mathbf{1} = 1$ & - \\[2pt]
			\midrule 2 & $\mathbf{b}^{T} \mathbf{c} = \tfrac1{2}$ & $\hat{\mathbf{b}}^{T} \mathbf{c} = \tfrac1{2}$ & - \\[2pt]
			\midrule 3 &  & $\hat{\mathbf{b}}^{T} \mathbf{c}^{.2} = \tfrac1{3}$ & \\
			& $\mathbf{b}^{T} A  \mathbf{c} = \tfrac1{6}$ & $\hat{\mathbf{b}}^{T} \widehat{A}  \mathbf{c} =\tfrac1{6}$ & {$\mathbf{b}^{T} \widehat{A}  \mathbf{c} = \tfrac1{6}, \; \hat{\mathbf{b}}^{T} A  \mathbf{c} = \tfrac1{6}$} \\[2pt]
			\midrule 
			& & $ \hat{\mathbf{b}}^{T} \mathbf{c}^{.3} = \frac{1}{4}$  &  \\[2pt]
			&  &  {$ \hat{\mathbf{b}}^{T} [\mathbf{c} \odot (\widehat{A}  \mathbf{c})] = \frac{1}{8}$}  & $\widehat{\mathbf{b}}^{T} [\mathbf{c} \odot (A \mathbf{c})] = \frac{1}{8}$ \\[2pt]
			4 &  & {$\hat{\mathbf{b}}^{T} \widehat{A} \mathbf{c}^{.2} = \frac{1}{12}$} &   $\mathbf{b}^{T} \widehat{A} \mathbf{c}^{.2} = \frac{1}{12}$, $\hat{\mathbf{b}}^{T} A \widehat{A}  \mathbf{c} + \mathbf{b}^{T} \widehat{A}^2  \mathbf{c} = \frac{1}{12}$  \\[2pt]
			& $\mathbf{b}^{T} A^{2}  \mathbf{c} =\frac{1}{24}$ &  $\hat{\mathbf{b}}^{T} \widehat{A}^{2}  \mathbf{c} = \frac{1}{24}$  &{ $\mathbf{b}^{T} \widehat{A} A \mathbf{c} + \hat{\mathbf{b}}^{T} A^2 \mathbf{c} = \frac{1}{12}$} \\[2pt]
			& & & { $\mathbf{b}^{T} A \widehat{A}  \mathbf{c} = \frac{1}{24}, \ \hat{\myvec{b}}^\top \hat{A} A \myvec{c} = \frac{1}{24}$} \\
			\bottomrule
		\end{tabular}
		\footnotesize
		\tnote{* For the vectors $\mathbf{x}$ and $\mathbf{y}$, $\mathbf{x}\odot\mathbf{y}:=(x_1y_1,x_2y_2,\cdots,x_sy_s)^T$ and $\mathbf{x}^{.m}:=\mathbf{x}\odot\mathbf{x}^{.(m-1)}$ for $m>1$.}
	\end{threeparttable}
\end{table}

In our scheme, the stiff linear dissipation term $\nu\Delta\omega$ is approximated by a $s$-stage stiffly-accurate diagonally implicit Runge-Kutta methods with the coefficient matrix $A$, the abscissa vector $\mathbf{c}=A\mathbf{1}$ and the vector of weights $\mathbf{b}:=A^T\mathbf{e}_s$, while the nonlinear convective term $\myvec{u}\cdot\nabla \omega$ is approximated, in the skew symmetric form \eqref{discrete: skew symmetric form}, 
by the $s$-stage  explicit Runge-Kutta methods with the strictly lower triangular coefficient matrix $\widehat{A}$, the abscissa vector $\hat{\mathbf{c}}=\widehat{A}\mathbf{1}$ and the vector of weights $\hat{\mathbf{b}}:=\widehat{A}^T\mathbf{e}_s$, where $\mathbf{1}=(1,1,\cdots,1)_{s-1}^T$ and $\mathbf{e}_s=(0,0,\cdots,1)_{s-1}^T$. We also impose the canopy node condition, $\hat{\mathbf{c}}={\mathbf{c}}$ or $A\mathbf{1}=\widehat{A}\mathbf{1}$,
such that the resulting IERK procedure \eqref{scheme: general IERK-vorticity} 
for the vorticity equation \eqref{cont: INSE-vorticity} is consistent at all stages. 
Note that, the IERK procedure \eqref{scheme: general IERK-vorticity} is an ARS-type IERK \cite{FuTangYang:2024,IzzoJackiewicz:2017,LiuZou:2006} named after \cite{AscherRuuthSpiteri:1997}, or called Radau-type IERK \cite{ChenHuangLiaoYi:2026cicp,LiaoWangWen:2024IERK,LiaoTangWangZhou:2025R-IERK,WangZhaoLiao:2025} since the implicit part employs the diagonally implicit Runge-Kutta method of Radau-type. 

As mentioned earlier, another novel contribution is a set of simplified order conditions for the IERK procedure \eqref{scheme: general IERK-vorticity}, see Table \ref{table: order condition} and the detail derivations in Section \ref{appendix: order conditions} of the supplementary material. Compared with the classical order conditions \cite{AscherRuuthSpiteri:1997,IzzoJackiewicz:2017,LiaoWangWen:2024IERK}, the number of third-order conditions has been reduced from 6 to 5, and the number of fourth-order conditions has been significantly reduced from 18 to 11. 
Obviously, the simplified order conditions make it easier for us to construct a formal class of fourth-order IERK methods. The idea of our simplification is very simple: consider a special ordinary differential problem $y'=Ly+g(y,t)$ with a linear (abstract) operator $L$ but not the traditional model $y'=f(y,t)+g(y,t)$ in the derivation of order conditions. Practically, to avoid the inner iterations at each stage or maintain the whole computational efficiency, the implicit part of IERK method always contains only linear terms, such as the implicit part of IERK procedure \eqref{scheme: general IERK-vorticity}. 

The remainder of the paper is organized as follows. Next section describes the Fourier pseudo-spectral discretization and presents some technical lemmas, including the convolution-type H\"{o}lder inequality for the quadratic form and the damping-type  Gr\"{o}nwall inequality for multi-stage methods. The  unified discrete energy arguments of the internal stability  with a complete mathematical induction to the stage-wise boundedness of vorticity in the $H^\delta$ norm are addressed in Section \ref{sec: stability}, 
while some new parameterized IERK methods up to fourth-order time accuracy are presented in Section \ref{sec: IERK methods} of the supplementary material. 
Extensive numerical experiments confirming both the accuracy and long-time behaviors are reported in Section \ref{sec: experiments}, in which we also suggest a new adaptive time-stepping (ATS) algorithm, called  adaptive time-stepping with local delay and local backtrack (ATS-LDLB), to accurately resolve the small-scale chaotic or high-frequency quasi-periodic  behaviors and efficiently accelerate the large-scale low-frequency periodic motions.

\section{Some preliminaries and technical lemmas}
\setcounter{equation}{0}

\subsection{Fourier pseudo-spectral approximation}
Set the domain $\Omega=(0,L)^2$
and consider $h_x=h_y=h:=L/M$ in each direction
for an even positive integer $M$.
Let $\Omega_{h}:=\big\{\myvec{x}_{h}=(ih,jh)\,|\,1\le i,j \le M\big\}$
and put
$\bar{\Omega}_{h}:=\Omega_{h}\cup\partial{\Omega}$.
Denote the space of $L$-periodic grid functions
$\mathbb{V}_{h}:=\{v\,|\,v=\bra{v_h}\; \text{is $L$-periodic for}\; \myvec{x}_h\in\bar{\Omega}_h\}.$
For a periodic function $v(\myvec{x})$ on $\bar{\Omega}$,
let $P_M:L^2(\Omega)\rightarrow \mathscr{F}_M$
be the standard $L^2$ projection operator onto the space $\mathscr{F}_M$,
consisting of all trigonometric polynomials of degree up to $M/2$,
and $I_M:L^2(\Omega)\rightarrow \mathscr{F}_M$
be the trigonometric interpolation operator \cite{ShenTangWang:2011Spectral},
i.e.,
\[
\bra{P_Mv}(\myvec{x})=\sum_{\ell,m =- M/2}^{M/2-1}
\widehat{v}_{\ell,m}e_{\ell,m}(\myvec{x}),\quad
\bra{I_Mv}(\myvec{x})=\sum_{\ell,m=- M/2}^{M/2-1}
\widetilde{v}_{\ell,m}e_{\ell,m}(\myvec{x}),
\]
where the complex exponential basis function
$e_{\ell,m}(\myvec{x}):=e^{\mathrm{i}\nu\bra{\ell x+my}}$ with $\nu=2\pi/L$.
The coefficients $\widehat{v}_{\ell,m}$
refer to the standard Fourier coefficients of function $v(\myvec{x})$,
and the
pseudo-spectral coefficients $\widetilde{v}_{\ell,m}$ are determined such that $\bra{I_Mv}(\myvec{x}_h)=v_h$. 
The Fourier pseudo-spectral first and second order derivatives of $v_h$ are given by
\[
\mathcal{D}_xv_h:=\sum_{\ell,m= -M/2}^{M/2-1}
\bra{\mathrm{i}\nu\ell}\widetilde{v}_{\ell,m}
e_{\ell,m}(\myvec{x}_h),\quad
\mathcal{D}_x^2v_h:=\sum_{\ell,m = -M/2}^{M/2-1}
\bra{\mathrm{i}\nu\ell}^2\widetilde{v}_{\ell,m}
e_{\ell,m}(\myvec{x}_h).
\]
The operators $\mathcal{D}_y$
and $\mathcal{D}_y^2$ can be defined similarly.
We define the discrete gradient and Laplacian
 by $\nabla_hv_h := (\mathcal{D}_xv_h,\mathcal{D}_yv_h)^T$ and $\Delta_hv_h :=\brat{\mathcal{D}_x^2+\mathcal{D}_y^2}v_h,$ respectively.

For grid functions $v,w\in\mathbb{V}_{h}$,
define the inner product
$\myinner{v,w}:=h^2\sum_{\myvec{x}_h\in\Omega_{h}}v_h \bar{w}_h,$
and the associated $L^{2}$ norm $\mynorm{v}:=\sqrt{\myinner{v,v}}$.
Also, we will use  $\mynorm{v}_{\infty}=\max_{\myvec{x}_h\in\Omega_{h}}|v_h|$,  $$\mynormb{\nabla_hv}:=\sqrt{h^2\sum_{\myvec{x}_h\in\Omega_{h}}|\nabla_hv_h|^2}\quad
\text{and}\quad  \mynormb{\Delta_hv}:=\sqrt{h^2\sum_{\myvec{x}_h\in\Omega_{h}}|\Delta_hv_h|^2}.$$
One has the Green's formulas,
$\myinner{-\Delta_hv,w}=\myinner{\nabla_hv,\nabla_hw}$ and
$\myinnert{\Delta_h^2v,w}=\myinner{\Delta_hv,\Delta_hw}$,
see \cite{ShenTangWang:2011Spectral,ChengWangWiseYue:2016Weakly} for more details.
Here and hereafter, any subscripted $\ck$, such as $\ck_\psi$, $\ck_g$, $\ck_\Omega$, ${\ck}_{\Omega1}$, ${\ck}_{\Omega2}$, ${\ck}_{\Omega3}$, ${\ck}_{\Omega,\delta}$, $\ck_{p1}$, $\ck_{p2}$, ${\ck}_{p,\delta}$ and so on,
denotes a fixed constant; while any subscripted $\Ck$, such as $\Ck_\omega$ and $\Ck_\psi$, denotes a generic positive constant, not necessarily
the same at different occurrences. The appeared constants may be dependent on the given data
and the solutions $\omega$ and $\psi$ but are always independent of the spatial length $h$ and the time-step size $\tau_n$.

\subsection{Estimates in Fourier collocation spectral space}	
It is recognized that aliasing errors in the nonlinear term impose a significant challenge for the numerical analysis of Fourier pseudospectral scheme. To address this issue, we introduce a continuous extension of a grid function and a Fourier collocation interpolation operator.

Let $v_h\in\mathbb{V}_{h}$ be a periodic grid function defined on the 2D mesh $\Omega_{h}$.  Its \emph{continuous extension} (or Fourier collocation interpolant) is the $\mathscr{F}_{M}$-trigonometric polynomial
\begin{align}\label{continuous extension formula}
	v_{\mathrm{c}}(\myvec{x}):=(I_{M}v_h)(\myvec{x})=\sum_{\ell,m=-M/2}^{M/2-1}\widetilde{v}_{\ell,m}\,e_{\ell,m}(\myvec{x}),\quad \myvec{x}\in\Omega,
\end{align}
where $I_M$ denotes the trigonometric interpolation operator and the \emph{pseudo-spectral coefficients} $\widetilde{v}_{\ell,m}$ are determined by requiring exactness on the grid:
\begin{align*}
	\widetilde{v}_{\ell,m}=\frac{1}{M^{2}}\sum_{i,j=0}^{M-1}v_h(x_i,y_j)\,e_{-\ell,-m}(x_i,y_j),\qquad
	|\ell|,|m|\le M/2.
\end{align*}
Consequently $v_{\mathrm{c}}(\myvec{x}_h)=v_h(\myvec{x}_h)$ at every mesh point $\boldsymbol{x}_h\in\Omega_h$.
For any periodic continuous function $v$ (possibly containing wavelengths beyond the mesh resolution), the same operator $I_M$ produces its collocation interpolant
\begin{align*}
	(I_M v)(\myvec{x})=\sum_{\ell,m=-M/2}^{M/2-1}\widetilde{v}_{\ell,m}\,e_{\ell,m}(\myvec{x}),
\end{align*}
with coefficients $\widetilde{v}_{\ell,m}$ given by the discrete formula above with $v_h(x_i,y_j)$ replaced by $v(x_i,y_j)$.
Note that $\widetilde{v}_{\ell,m}$ may not be the true Fourier coefficients of $v$, due to the truncation and aliasing errors.

We also use  the standard seminorms and norms in the Sobolev space $H^{m}(\Omega)$ for $m\ge0$.
Let $C^{\infty}(\Omega)$ be a set of infinitely differentiable $L$-periodic functions defined on $\Omega$,
and $H_{per}^{m}(\Omega)$ be the closure of $C^{\infty}(\Omega)$ in $H^{m}(\Omega)$,
endowed with the semi-norm $|\cdot|_{H_{per}^m}$ and the norm $\mynorm{\cdot}_{H_{per}^{m}}$. 
For simplicity, denote further $|\cdot|_{H^m}:=|\cdot|_{H_{per}^m}$, $\mynorm{\cdot}_{H^{m}}:=\mynorm{\cdot}_{H_{per}^{m}}$ and $\mynorm{\cdot}_{L^{2}}:=\mynorm{\cdot}_{H^{0}}$.

\begin{lemma} \cite{ChengWang:2016} \label{lemma: collocation interpolation}
		For any $v\in\mathcal{P}_{2N}$ in the $d$ dimension, it holds that
	$$\mynormb{I_Mv}_{H^k}\le 2^{d/2}\mynormb{v}_{H^k}\quad\text{for $k\in\mathbb{Z}$, $k\ge0$.}$$
\end{lemma}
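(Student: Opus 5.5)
The plan is to argue directly on the Fourier side, comparing the Fourier coefficients of $I_Mv$ with those of $v$. Take $v\in\mathcal{P}_{2N}$, a trigonometric polynomial of degree at most $2N$ in each direction, with $M=2N$. Write $v(\myvec{x})=\sum_{\myvec{k}}\widehat{v}_{\myvec{k}}e_{\myvec{k}}(\myvec{x})$ with $\myvec{k}\in\{-2N,\dots,2N\}^d$.

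\textbf{Step 1: aliasing formula.} At the grid points, $e_{\myvec{k}}$ coincides with $e_{\myvec{k}'}$ whenever $\myvec{k}\equiv\myvec{k}'$ modulo $M$ componentwise. Hence the pseudo-spectral coefficients satisfy
\[
\widetilde{v}_{\myvec{j}}=\sum_{\myvec{k}\equiv\myvec{j}\ (\mathrm{mod}\ M)}\widehat{v}_{\myvec{k}},\qquad \myvec{j}\in\{-N,\dots,N-1\}^d .
\]
Since each component satisfies $|k_i|\le 2N=M$, for each $j_i$ only a few $k_i$ contribute. Apart from the endpoint case $\pm 2N$, this is at most two values: $k_i=j_i$, and $k_i=j_i\pm M$ with the sign opposite to that of $j_i$. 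I would handle the edge modes $k_i=\pm 2N$ by the convention $\mathcal{P}_{2N}$ = span of $e_{\myvec{k}}$ with $-2N\le k_i\le 2N-1$, which is the form used in \cite{ChengWang:2016}. Under this convention each $\myvec{j}$ receives at most $2^d$ contributions $\widehat{v}_{\myvec{k}}$.

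\textbf{Step 2: comparison of weights.} The key point is that each aliased wavenumber $\myvec{k}$ mapped to $\myvec{j}$ satisfies $|k_i|\ge|j_i|$ componentwise. Indeed, $|j_i\pm M|\ge M-|j_i|\ge N\ge|j_i|$. Therefore $|\myvec{k}|^{2}\ge|\myvec{j}|^{2}$, and the $H^k$ weight $(1+|\myvec{j}|^2)^k$ (or $|\myvec{j}|^{2k}$ for the seminorm) is dominated by that of every contributing $\myvec{k}$.

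\textbf{Step 3: Cauchy--Schwarz and summation.} By Cauchy--Schwarz,
\[
|\myvec{j}|^{2k}|\widetilde{v}_{\myvec{j}}|^2\le 2^d\sum_{\myvec{k}\equiv\myvec{j}}|\myvec{k}|^{2k}|\widehat{v}_{\myvec{k}}|^2 .
\]
Summing over $\myvec{j}$, each $\myvec{k}$ appears exactly once, which gives $|I_Mv|_{H^k}^2\le 2^d|v|_{H^k}^2$. The same computation with the full weight $(1+|\cdot|^2)^k$ gives the norm bound. Taking square roots yields the factor $2^{d/2}$.

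The only delicate point is the bookkeeping in Step 1 at the boundary frequencies $\pm N$ and $\pm 2N$. There one must check both that the count stays at most $2^d$ and that the weight comparison in Step 2 still holds. When $j_i=-N$, the contributing values are $k_i=-N$ and $k_i=N$ (the latter if $N$ is included), and both have $|k_i|=N$, so the comparison is fine. Everything else is routine Parseval.
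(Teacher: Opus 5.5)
The paper gives no proof of this lemma and only cites \cite{ChengWang:2016}. Your argument (aliasing formula, at most two aliases per direction whose frequencies dominate $|j_i|$, then Cauchy--Schwarz and Parseval) is the standard proof from that line of work, going back to \cite{GottliebWang:2012}, and it is correct. Your half-open convention for $\mathcal{P}_{2N}$ is genuinely necessary on the paper's even grid: with the symmetric range $-M,\dots,M$ the case $k=0$ fails (for $d=1$, $L=2\pi$, $v=1+2\cos(Mx)$ equals $3$ at every node, so $\|I_Mv\|=\sqrt{3}\,\|v\|$), and attributing that convention to \cite{ChengWang:2016} is probably inaccurate, since as far as I recall the cited works avoid the issue by using $2N+1$ nodes, though this does not affect your proof.
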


This lemma devotes to overcome a key difficulty associated with the $H^k$ norm bound of the nonlinear term
obtained by collocation interpolation. As is seen later, Lemma \ref{lemma: collocation interpolation}, combined with the H\"{o}lder's inequality, the Sobolev embedding and the elliptic regularity, will be used in the proofs of Lemma \ref{lemma: nonlinear convection bound} and  Lemma \ref{lemma: nonlinear convection H2 bound}.

Note that, the numerical velocity from \eqref{scheme: general IERK-velocity} is divergence-free at each stage,
\begin{align}\label{discrete: divergence-free}
	\nabla_h\cdot\myvec{u}^{n,i+1}=&\,\mathcal{D}_x\mathcal{D}_y\psi^{n,i+1}-\mathcal{D}_y\mathcal{D}_x\psi^{n,i+1}=0\quad\text{for $1\le i\le s_{\mathrm{I}}$.}
\end{align} 
The skew symmetric form \eqref{discrete: skew symmetric form} of the nonlinear convection term is well-known for its ability to overcome the difficulty associated with the aliasing errors appearing in
pseudospectral approximations. Actually, it makes the nonlinear term orthogonal to
the vorticity field in the $L^2$ space,
\begin{align}\label{discrete: skew symmetric orthogonality}
	2\myinnerb{\mathtt{c}_h(\myvec{u},\omega),\omega}=\myinnerb{\myvec{u}\cdot\nabla_h \omega,\omega}-\myinnerb{\myvec{u}\omega,\nabla_h \omega}=0.
\end{align}
In addition, we denote $\myvec{u}_c^{n,j}$, $\myvec{\omega}_c^{n,j}$, and $\myvec{\psi}_c^{n,j}$
as the continuous projection
of $\myvec{u}^{n,j}$, $\omega^{n,j}$, and $\psi^{n,j}$
, respectively, with the projection formula given by \eqref{continuous extension formula}. It is
clear that $\myvec{u}_c^{n,j}, \myvec{\omega}_c^{n,j}, \myvec{\psi}_c^{n,j}\in \mathscr{F}_M$ and the kinematic equation $-\Delta \myvec{\psi}_c^{n,j}= \myvec{\omega}_c^{n,j}, \myvec{u}_c^{n,j}=(\partial_y\myvec{\psi}_c^{n,j},-\partial_x\myvec{\psi}_c^{n,j})$
is
satisfied at the continuous level. Because of these kinematic equations, an application
of elliptic regularity shows that
\begin{align}\label{elliptic regularity}
	&\|\psi^{n,j}\|_{H^{m+2}_h} \le \ck_\psi \|\omega^{n,j}\|_{H^{m}_h}, \quad
	\|\psi^{n,j}\|_{H^{m+2+\alpha}_h} \leq \ck_\psi \|\omega^{n,j}\|_{H^{m+\alpha}_h},\\ \notag
	&\|\myvec{\psi}_c^{n,j}\|_{H^{m+2}} \leq \ck_\psi \|\myvec{\omega}_c^{n,j}\|_{H^{m}}, \quad
	\|\myvec{\psi}_c^{n,j}||_{H^{m+2+\alpha}} \leq \ck_\psi \|\myvec{\omega}_c^{n,j}||_{H^{m+\alpha}},
\end{align}
in which we normalize the stream function with $\int_{\Omega}\psi^{n,j}d\myvec{x}=0$. Meanwhile, since all the profiles have mean zero over the domain,
\begin{align*}
	&\overline{\psi^{n,j}}=0,\quad\overline{\myvec{u}^{n,j}}=\brat{\overline{\mathcal{D}_y\psi^{n,j}},-\overline{\mathcal{D}_x\psi^{n,j}}}=0,\quad \overline{\omega^{n,j}}=\overline{\Delta_h \psi^{n,j}}=0,\\
	&\overline{\myvec{\psi}_c^{n,j}}=0,\quad\overline{\myvec{u}^{n,j}_c}=\brat{\overline{\partial_y\myvec{\psi}_c^{n,j}},-\overline{\partial_x\myvec{\psi}_c^{n,j}}}=0,\quad \overline{\myvec{\omega}_c^{n,j}}=\overline{\Delta \myvec{\psi}_c^{n,j}}=0.
\end{align*}
It is clear that all the Poincar\'{e} inequality and elliptic regularity can be applied.

\subsection{Technical lemmas for IERK methods}

We define the lower triangular coefficient matrices for the implicit and explicit parts,
$$A_{\mathrm{I}}:=\brab{a_{i+1,j+1}}_{i,j=1}^{s_{\mathrm{I}}}\quad\text{and}\quad A_{\mathrm{E}}:=\brab{\hat{a}_{i+1,j}}_{i,j=1}^{s_{\mathrm{I}}}\,.$$
Also, let $E_{s_{\mathrm{I}}}:=(1_{i\ge j})_{s_{\mathrm{I}}\times s_{\mathrm{I}}}$ be the lower triangular matrix full of element 1, and let $I_{s_{\mathrm{I}}}$ be the  identity matrix of the same size as $A_{\mathrm{I}}$. 
In the subsequent analysis, we always define the stage difference operator  $\delta_{\tau}w^{n,i+1}:=w^{n,i+1}-w^{n,i}$ and use the following stage difference form of \eqref{scheme: general IERK-vorticity}, 
\begin{align}\label{scheme: general IERK-vorticity-differential}    
	\delta_{\tau}\omega_h^{n,i+1}+\tau_n\sum_{j=1}^{i}\underline{\hat{a}}_{i+1,j}
	\mathtt{c}_h(\myvec{u}_h^{n,j},\omega_h^{n,j})
	=\nu\tau_n\sum_{j=1}^{i}\underline{a}_{i+1,j+1}\brab{\Delta_h\omega_h^{n,j+1}+g^{n,j+1}}
\end{align}
for $1\le i\le s_{\mathrm{I}}$, where the difference coefficients $\underline{a}_{i+1,j+1}$ and $\underline{\hat{a}}_{i+1,j}$ can be defined by
\begin{align}\label{scheme: IERK-coefficient matrix}    
	(\underline{a}_{i+1,j+1})_{s_{\mathrm{I}}\times s_{\mathrm{I}}}:=E_{s_{\mathrm{I}}}^{-1}A_{\mathrm{I}}\quad\text{and}\quad 
	(\underline{\hat{a}}_{i+1,j})_{s_{\mathrm{I}}\times s_{\mathrm{I}}}:=E_{s_{\mathrm{I}}}^{-1}A_{\mathrm{E}}.
\end{align}

For the two difference coefficient matrices defined by \eqref{scheme: IERK-coefficient matrix}, we have the following result, where $\mathcal{S}(A):=(A+A^T)/2$ for a given matrix $A$.

\begin{lemma} \label{lemma: bound quadratic form}	
	Assume that $\mathcal{S}(E_{s_{\mathrm{I}}}^{-1}A_{\mathrm{I}})$ has a finite minimum eigenvalue $\lambda_{\mathrm{I}}>0$, while the spectral
	norms of $E_{s_{\mathrm{I}}}^{-1}A_{\mathrm{I}}$ and $E_{s_{\mathrm{I}}}^{-1}A_{\mathrm{E}}$ are bounded by  positive constants $\sigma_{\mathrm{I}}$ and $\sigma_{\mathrm{E}}$, respectively. For any time-space sequences $\{v_h^{i},u_h^i \mid i \geq 1\}$, it holds that, 
	\begin{align*}
		(i)\;\;&\,\sum_{i=1}^{k} \sum_{j=1}^{i} \underline{a}_{i+1,j+1} \myinnerb{v^{j}, v^{i}} \geq \lambda_{\mathrm{I}}\sum_{i=1}^{k}\mynormb{v^{i}}^2\quad\text{for $1\le k\le s_{\mathrm{I}}$,}\\
		(ii)\;\;&\,\sum_{i=1}^{k} \sum_{j=1}^{i}\underline{a}_{i+1,j+1} \myinnerb{v^{j}, u^{i}} \leq
		\sigma_{\mathrm{I}}\braB{\sum_{i=1}^k\mynormb{v^{i}}^2}^{\frac12}
		\braB{\sum_{i=1}^k\mynormb{u^{i}}^2}^{\frac12} \quad\text{for $1\le k\le s_{\mathrm{I}}$,}\\
		(iii)\;\;&\,\sum_{i=1}^{k} \sum_{j=1}^{i}\underline{\hat{a}}_{i+1,j} \myinnerb{v^{j}, u^{i}} \leq
		k^{\frac{1}{2}-\frac{1}{\mu}}\sigma_{\mathrm{E}}\braB{\sum_{i=1}^k\mynormb{v^{i}}^\alpha}^{\frac1{\alpha}}
		\braB{\sum_{i=1}^k\mynormb{u^{i}}^\beta}^{\frac1{\beta}} \quad\text{for $1\le k\le s_{\mathrm{I}}$,}
	\end{align*}
	where $\mu:=\max\{\alpha,\beta\}$ for $\alpha, \beta \geq 1$ and $\frac1\alpha+\frac1\beta=1$.
\end{lemma}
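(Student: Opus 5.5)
Each part reduces to a matrix statement by expanding in an orthonormal basis of the grid function space. For a grid function $v$ write its coordinates as $v_p$, $p=1,\dots,P$ (e.g.\ nodal values scaled by $h$, or Fourier coefficients), so that $\myinner{v,w}=\sum_p v_p\bar w_p$. For fixed $k$, set $\mathbf{v}_p:=(v^1_p,\dots,v^k_p)^T\in\mathbb{C}^k$, and let $B_k$ denote the leading $k\times k$ principal submatrix of $E_{s_{\mathrm{I}}}^{-1}A_{\mathrm{I}}$, with $\widehat{B}_k$ the analogous submatrix of $E_{s_{\mathrm{I}}}^{-1}A_{\mathrm{E}}$. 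Both are lower triangular. The double sums then become
$$\sum_{i=1}^k\sum_{j=1}^i \underline{a}_{i+1,j+1}\myinnerb{v^j,u^i}=\sum_p \mathbf{u}_p^{H}B_k\mathbf{v}_p .$$
The same identity holds with $\widehat{B}_k$ and $\underline{\hat a}$. Here I take the quantities to be real parts (or all data real), as is implicit in the statement.

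\textbf{Part (i).} For each $p$ we have $\mathrm{Re}\,\mathbf{v}_p^H B_k\mathbf{v}_p=\mathbf{v}_p^H\mathcal{S}(B_k)\mathbf{v}_p$. Moreover, $\mathcal{S}(B_k)$ is the leading principal submatrix of $\mathcal{S}(E_{s_{\mathrm{I}}}^{-1}A_{\mathrm{I}})$. By Cauchy interlacing, or simply by testing the full matrix against vectors padded with zeros, its minimum eigenvalue is at least $\lambda_{\mathrm{I}}$. Hence each term is at least $\lambda_{\mathrm{I}}|\mathbf{v}_p|^2$, and summing over $p$ gives (i).

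\textbf{Part (ii).} The spectral norm of a submatrix is bounded by that of the full matrix, so $\|B_k\|_2\le\sigma_{\mathrm{I}}$. Then
$$\Big|\sum_p\mathbf{u}_p^HB_k\mathbf{v}_p\Big|\le\sigma_{\mathrm{I}}\sum_p|\mathbf{u}_p||\mathbf{v}_p|\le\sigma_{\mathrm{I}}\Big(\sum_p|\mathbf{v}_p|^2\Big)^{1/2}\Big(\sum_p|\mathbf{u}_p|^2\Big)^{1/2}$$
by Cauchy--Schwarz in $p$. Since $\sum_p|\mathbf{v}_p|^2=\sum_i\|v^i\|^2$, this is (ii).

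\textbf{Part (iii).} The same argument with $\widehat{B}_k$ first gives the $\ell^2$ bound
$$\sigma_{\mathrm{E}}\Big(\sum_i\|v^i\|^2\Big)^{1/2}\Big(\sum_i\|u^i\|^2\Big)^{1/2}.$$
It remains to convert the $\ell^2$ norms of the vectors $(\|v^i\|)_{i\le k}$ and $(\|u^i\|)_{i\le k}$ into $\ell^\alpha$ and $\ell^\beta$ norms. Assume without loss of generality that $\alpha\le2\le\beta$, so $\mu=\beta$. Monotonicity of $\ell^q$ norms gives $\|x\|_{\ell^2}\le\|x\|_{\ell^\alpha}$. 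Hölder's inequality on $k$ entries gives $\|y\|_{\ell^2}\le k^{1/2-1/\beta}\|y\|_{\ell^\beta}$. Multiplying these yields the factor $k^{1/2-1/\mu}$. The case $\beta\le2\le\alpha$ is symmetric. Since $1/\alpha+1/\beta=1$ forces one exponent to be at most $2$ and the other at least $2$, these two cases are exhaustive.

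The only point needing care is the finite-dimensional reduction: I have to check that $\myinner{\cdot,\cdot}$ really diagonalizes into a sum of independent scalar components. It does, because the inner product is a weighted Euclidean inner product on the nodal values. With that in hand, the coefficient matrix acts only across the stage index $i$, so the whole lemma is linear algebra plus Hölder.
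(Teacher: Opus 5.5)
Your proposal is correct and follows essentially the same route as the paper. Part (i) uses Cauchy interlacing for the leading principal submatrix of $\mathcal{S}(E_{s_{\mathrm{I}}}^{-1}A_{\mathrm{I}})$, and parts (ii)--(iii) use a pointwise (per grid node) spectral-norm/Cauchy--Schwarz bound over the stage index, then Cauchy--Schwarz in space, then the equivalences $\|x\|_{\ell^2}\le\|x\|_{\ell^\alpha}$ and $\|y\|_{\ell^2}\le k^{1/2-1/\beta}\|y\|_{\ell^\beta}$; your nodal-coordinate formulation simply makes the paper's pointwise step explicit and also covers complex data by taking real parts.
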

\begin{proof} By the definitions in \eqref{scheme: IERK-coefficient matrix} of the matrix $E_{s_{\mathrm{I}}}^{-1}A_{\mathrm{I}}$, the first inequality is obvious according to the Cauchy's interlacing theorem for a bordered Hermitian matrix, see \cite[Theorem 4.3.17]{HornJohnson:2013book}. As for the third result, we denote $w^i_h=\sum_{j=1}^i\underline{\hat{a}}_{i+1,j} v^i_h $ and apply the  Cauchy-Schwarz inequality to get
	\begin{align}\label{lemmaProof-bound quadratic form equation}
		& \,\sum_{i=1}^{k} \sum_{j=1}^{i}\underline{\hat{a}}_{i+1,j} \myinnerb{v^{j}, u^{i}}\leq	 \myinnerB{\braB{\sum_{i=1}^{k}(w^i)^2}^{\frac12},\braB{\sum_{i=1}^{k}(u^i)^2}^{\frac12}}   \\
		& \,\hspace{2cm}\le  \sigma_{\mathrm{E}} \myinnerB{\braB{\sum_{i=1}^{k}(v^i)^2}^{\frac12},\braB{\sum_{i=1}^{k}(u^i)^2}^{\frac12}}\notag\\ 
		&\,\hspace{2cm}\leq  \sigma_{\mathrm{E}}\braB{\sum_{i=1}^{k}\mynorm{v^i}^2}^{\frac12}\braB{\sum_{i=1}^{k}\mynorm{u^i}^2}^{\frac12}
		= \sigma_{\mathrm{E}}\braB{\sum_{i=1}^{k}\tilde{v}_i^2}^{\frac12}\braB{\sum_{i=1}^{k}\tilde{u}_i^2}^{\frac12},\notag
	\end{align} %
	where $\tilde{v}_i:=\mynormt{v^i}$ and $\tilde{u}_i:=\mynormt{u^i}$. According to the equivalent relationship \cite[(5.4.21)]{HornJohnson:2013book} of the $\ell_{p}$-norms of vector, $$\braB{\sum_{i=1}^{k}\tilde{v}_i^{p_2}}^{\frac{1}{p_2}}\leq \braB{\sum_{i=1}^{k}\tilde{v}_i^{p_1}}^{\frac{1}{p_1}}\leq k^{\frac{1}{p_1}-\frac{1}{p_2}} \braB{\sum_{i=1}^{k}\tilde{v}_i^{p_2}}^{\frac{1}{p_2}}\quad\text{ for $1\leq p_1 \leq p_2<\infty$,}$$ one can get the inequality (ii). Actually, for any $\beta\geq2$ with $\frac1\alpha+\frac1\beta=1$, one finds that
	\begin{align*}
		\braB{\sum_{i=1}^{k}\tilde{v}_i^{2}}^{\frac12}\leq \braB{\sum_{i=1}^{k}\tilde{v}_i^{\alpha}}^{\frac1\alpha} \quad\text{and}\quad \braB{\sum_{i=1}^{k}\tilde{u}_i^2}^{\frac12}\leq k^{\frac{1}{2}-\frac{1}{\beta}} \braB{\sum_{i=1}^{k}\tilde{u}_i^{\beta}}^{\frac1\beta}.
	\end{align*}
	Thus, the right hand side of \eqref{lemmaProof-bound quadratic form equation} can be bounded by
	\begin{align*}
		\braB{\sum_{i=1}^{k}\tilde{v}_i^2}^{\frac12} \braB{\sum_{i=1}^{k}\tilde{u}_i^2}^{\frac12}\leq k^{\frac{1}{2}-\frac{1}{\beta}} \braB{\sum_{i=1}^{k}\tilde{v}_i^{\alpha}}^{\frac1\alpha}\braB{\sum_{i=1}^{k}\tilde{u}_i^{\beta}}^{\frac1\beta} \quad\text{for $\beta\geq2$ and $\frac1\alpha+\frac1\beta=1$}.
	\end{align*}
	Similarly, one has
	\begin{align*}
		\braB{\sum_{i=1}^{k}\tilde{v}_i^2}^{\frac12} \braB{\sum_{i=1}^{k}\tilde{u}_i^2}^{\frac12}\leq k^{\frac{1}{2}-\frac{1}{\alpha}} \braB{\sum_{i=1}^{k}\tilde{v}_i^{\alpha}}^{\frac1\alpha}\braB{\sum_{i=1}^{k}\tilde{u}_i^{\beta}}^{\frac1\beta} \quad\text{for $\alpha\geq2$  and $\frac1\alpha+\frac1\beta=1$}.
	\end{align*}
	The second result can be similarly derived directly from \eqref{lemmaProof-bound quadratic form equation}. It completes the proof.
\end{proof}

The convolution-type H\"{o}lder inequality in Lemma \ref{lemma: bound quadratic form} (iii) and
the following discrete Gr\"{o}nwall inequality are useful in the long-time stability analysis.

\begin{lemma} \label{lemma: decaying gronwall multi-stage RK}
	For any integers $s>1$ and $N>1$, let $c_g>0$ be a constant and $v_{n,i}$ be the non-negative discrete function defined at the stage $t_{n,i}$ for the time level indexes $n=1,2,\cdots,N$  and the stage indexes $i=1,2,\cdots,s$ with a finite $T=t_{N}=t_{N,s}$. Also, let $v_n:=v_{n,s}=v_{n+1,1}$. There exists a positive constant $\xi$, independent of the time-step sizes $\tau_n$, such that
	\begin{equation}\label{lemProof: decaying gronwall assumption}
		(1+2\xi\tau_n)v_{n,i}\le v_{n,1}
		+c_g\tau_n\quad\text{for $n\ge1$ and $2\le i\le s$.}
	\end{equation} 
	If the maximum step-size $\tau_{\max}$ is small such that $\tau_{\max}\le 1/\xi$, it holds  that
	\begin{align*}
		v_{m,i}	\le e^{-\xi t_{m}}v_{0}+c_g/{\xi}\quad \text{for $m\ge1$ and $2\le i\le s$.}
	\end{align*}	
\end{lemma}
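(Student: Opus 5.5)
The plan is to take the end-of-step inequality (the case $i=s$ in \eqref{lemProof: decaying gronwall assumption}), turn it into a one-step contraction, and iterate over $n$. The intermediate stages $2\le i\le s-1$ are then handled in the same way at the last step. Recall that $v_{n,1}=v_{n-1}$ by the convention $v_n=v_{n,s}=v_{n+1,1}$.

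\emph{Step 1: one-step bound.} Dividing \eqref{lemProof: decaying gronwall assumption} by $1+2\xi\tau_n$ gives
\[
v_{n,i}\le \frac{v_{n-1}}{1+2\xi\tau_n}+\frac{c_g\tau_n}{1+2\xi\tau_n}.
\]
I will show that $\frac{1}{1+2\xi\tau_n}\le e^{-\xi\tau_n}$ whenever $\xi\tau_n\le 1$. This is where the assumption $\tau_{\max}\le 1/\xi$ is used. Set $x=\xi\tau_n\in[0,1]$. Then $e^{x}\le 1+x+x^2\le 1+2x$, because $x^2\le x$ on $[0,1]$ and $e^x\le 1+x+x^2$ there (its Taylor remainder is at most $(e-2)x^2\le x^2$). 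Therefore
\[
v_{n,i}\le e^{-\xi\tau_n}v_{n-1}+\frac{c_g\tau_n}{1+2\xi\tau_n}\qquad\text{for }2\le i\le s.
\]

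\emph{Step 2: iteration and a telescoping sum.} Taking $i=s$ and iterating from $n=1$ to $m$ yields
\[
v_m\le e^{-\xi t_m}v_0+c_g\sum_{n=1}^m e^{-\xi(t_m-t_n)}\frac{\tau_n}{1+2\xi\tau_n}.
\]
The obstacle is to bound this sum by $1/\xi$ without any extra constant. The key is the inequality
\[
\frac{\tau_n}{1+2\xi\tau_n}\le \frac{1-e^{-\xi\tau_n}}{\xi}.
\]
With $x=\xi\tau_n$ this reads $x e^{x}\le (1+2x)(e^{x}-1)$, i.e. $(1+x)e^{x}\ge 1+2x$. That holds because $e^{x}\ge 1+x$ and $(1+x)^2\ge 1+2x$. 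Multiplying by $e^{-\xi(t_m-t_n)}$, each term becomes $\xi^{-1}\bigl(e^{-\xi(t_m-t_n)}-e^{-\xi(t_m-t_{n-1})}\bigr)$, so the sum telescopes to $\xi^{-1}(1-e^{-\xi t_m})\le 1/\xi$. This gives $v_m\le e^{-\xi t_m}v_0+c_g/\xi$ for all $m\ge 0$.

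\emph{Step 3: intermediate stages.} For $2\le i\le s$, Step 1 with $n=m$ gives
\[
v_{m,i}\le e^{-\xi\tau_m}v_{m-1}+c_g\frac{\tau_m}{1+2\xi\tau_m}.
\]
Insert the bound from Step 2 for $v_{m-1}$ and use $e^{-\xi\tau_m}e^{-\xi t_{m-1}}=e^{-\xi t_m}$. The constant terms combine to
\[
c_g\Bigl[e^{-\xi\tau_m}\xi^{-1}(1-e^{-\xi t_{m-1}})+\xi^{-1}(1-e^{-\xi\tau_m})\Bigr]=c_g\xi^{-1}(1-e^{-\xi t_m})\le c_g/\xi,
\]
again using the telescoping inequality from Step 2. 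This gives the claimed bound for all $m\ge1$ and $2\le i\le s$.

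I expect the main difficulty to be the two elementary exponential inequalities in Steps 1 and 2: they are what allow the constant to be exactly $c_g/\xi$ rather than $c_g/\xi$ times some factor.
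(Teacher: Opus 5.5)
Your proof is correct and follows the paper's route: you use $1+2\xi\tau_n\ge e^{\xi\tau_n}$ for $\xi\tau_n\le 1$ to get a one-step contraction, iterate the end-of-step inequality, and then reinsert that bound for the intermediate stages. The only difference is in bounding the source sum: the paper replaces $\frac{c_g\tau_n}{1+2\xi\tau_n}$ by $e^{-\xi\tau_n}c_g\tau_n$ and compares the resulting left Riemann sum with $\int_0^{t_m}e^{\xi t}\,\mathrm{d}t$, whereas your sharper pointwise inequality $\frac{x}{1+2x}\le 1-e^{-x}$ (valid for all $x\ge 0$) makes the sum telescope exactly to the same value $\xi^{-1}(1-e^{-\xi t_m})$.
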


\begin{proof} For $\tau_{\max}\le 1/\xi$, one has $1+2\xi\tau_n\ge e^{\xi\tau_n}$. Thus the priori setting \eqref{lemProof: decaying gronwall assumption} gives
	\begin{equation}\label{lemProof: vn stage-level}
		v_{n,i}\le e^{-\xi\tau_n}v_{n,1}+e^{-\xi\tau_n}c_g\tau_n
	\end{equation} 
	One can take $i:=s$ and $n:=k$ in \eqref{lemProof: vn stage-level} to get
	\begin{equation*}
		v_{k}\le e^{-\xi\tau_k}v_{k-1}+e^{-\xi\tau_k}c_g\tau_k\quad\text{for $k\ge1$,}
	\end{equation*} 
	which yields
	\begin{align*}
		v_{k}\le&\, e^{-\xi\tau_k}v_{k-1}+e^{-\xi\tau_k}c_g\tau_k
		\le e^{-\xi\tau_k}(e^{-\xi\tau_{k-1}}v_{k-2}
		+e^{-\xi\tau_{k-1}}c_g\tau_{k-1})+e^{-\xi\tau_k}c_g\tau_k\\
		\le&\,\cdots
		\le e^{-\xi t_k}v_{0}+c_g\sum_{j=1}^{k}\tau_je^{-\xi(t_k-t_{j-1})}.
	\end{align*}
	Then we derive from \eqref{lemProof: vn stage-level} that 
	\begin{align*}
		v_{m,i}\le&\, e^{-\xi t_{m}}v_{0}+c_g\sum_{j=1}^{m}\tau_je^{-\xi(t_{m}-t_{j-1})}
		=e^{-\xi t_{m}}v_{0}+c_ge^{-\xi t_{m}}\sum_{j=1}^{m}\tau_je^{\xi t_{j-1}}\\
		\le&\, e^{-\xi t_{m}}v_{0}+c_ge^{-\xi t_{m}}\int_0^{t_m}e^{\xi t}\zd t
		=e^{-\xi t_{m}}v_{0}+\frac{c_g}{\xi}(1-e^{-\xi t_{m}}),
	\end{align*}
	which leads to the claimed  stage estimate and completes the proof.
\end{proof}


\section{Long time stability of IERK methods}\label{sec: stability}
\setcounter{equation}{0}

The long-time stability of the discrete system \eqref{scheme: general IERK-vorticity}-\eqref{scheme: general IERK-velocity} will be performed by a complete mathematical induction to the stage-wise boundedness of vorticity in the $H^{\delta}$ norm for some $\delta\in(0,\frac12]$, 
\begin{align}\label{estimate: H-delta-bound}
	\mynormb{\myvec{\omega}_c^{n,j}}_{H^{\delta}}\le \cm_0\quad\text{for $n\ge1$ and $1\le j\le s$,}
\end{align} 
where $\myvec{\omega}_c^{n,j}$ is the continuous extension of $\omega_h^{n,j}$ and the global-in-time constant $\cm_0>0$ will be determined later (we use the subscripted $\cm$, including $\cm_0$, $\cm_1$ and $\cm_2$, to
denote a fixed constant in the process of mathematical induction). Obviously, it holds for the starting case $n=1$ and $j=1$ since $\omega_h^{1,1}=\omega_h^{0}$. We put the following induction hypothesis
\begin{align}\label{estimate: H-delta-induction hypothesis}
	\mynormb{\myvec{\omega}_c^{n,j}}_{H^{\delta}}\le \cm_0\quad\text{for $1\le n\le m$ and $1\le j\le l$.}
\end{align}
It remains to verify $\mynormb{\myvec{\omega}_c^{m,l+1}}_{H^{\delta}}\le \cm_0$ by a three-step procedure:
\begin{itemize}
	\item \textbf{Step 1}: 	Based on the induction hypothesis \eqref{estimate: H-delta-induction hypothesis}, this step establishes the global-in-time boundedness $\cm_1$ of stage solutions $\omega_h^{m,l+1}$ in the $L^2$ norm by testing the stage difference form \eqref{scheme: general IERK-vorticity-differential}  with $2\omega_h^{n,i+1}$ with the help of the Sobolev embedding and the H\"{o}lder's inequalities;
	\item \textbf{Step 2}: With the help of the boundedness of $\mynormb{\omega^{m,l+1}}$ in Step 1, this step establishes the global-in-time boundedness $\cm_2$ of stage solutions $\omega_h^{m,l+1}$ in the $H^1$ semi-norm  by testing the stage difference form \eqref{scheme: general IERK-vorticity-differential}  with $-2\Delta_h\omega_h^{n,i+1}$;
	\item \textbf{Step 3}: Recover the stage boundedness  \eqref{estimate: H-delta-bound} in the $H^{\delta}$ norm by using the $L^2$ norm estimate in Step 1 and the $H^1$ semi-norm estimate in Step 2. 
\end{itemize}

\subsection{$L^2$ norm estimate}

Next lemma is helpful to the $L^2$ norm estimate of vorticity, 
see the proof in Section \ref{appendix: lemma3.1} of the supplementary material.

\begin{lemma} \label{lemma: nonlinear convection bound}
	Assume that the spectral norm of $E_{s_{\mathrm{I}}}^{-1}A_{\mathrm{E}}$ is bounded by a constant $\sigma_{\mathrm{E}}>0$. If  $\mynormb{\myvec{\omega}_c^{n,j}}_{H^{\delta}}\le \cm_0$ for $1\le n\le m$ and $1\le j\le l$,
	then there exists a positive constant $\ck_{\Omega}$ such that
	\begin{align*} 
		&\, 2\tau_n\sum_{i=1}^k\sum_{j=1}^{i}
		\underline{\hat{a}}_{i+1,j}\myinnerb{\mathtt{c}_h(\myvec{u}^{n,j},\omega^{n,j}),\omega^{n,i+1}}
		\le\frac{s^2\ck_{\Omega}^2\cm_0^2\sigma_{\mathrm{E}}^2}{8\epsilon}
		\tau_n\sum_{i=1}^k\mynormb{\delta_{\tau}\omega^{n,i+1}}^2\\
		&\,\hspace{1.8cm}+\epsilon\tau_n\sum_{i=1}^k\mynormb{\nabla_h\omega^{n,i+1}}^2\quad\text{for any $\epsilon>0$, $1\le k\le l\le s_{\mathrm{I}}$ and $1\le n\le m$.}
	\end{align*}		
\end{lemma}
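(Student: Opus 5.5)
The key observation is the skew-symmetry \eqref{discrete: skew symmetric orthogonality}: $\myinnerb{\mathtt{c}_h(\myvec{u}^{n,j},\omega^{n,j}),\omega^{n,j}}=0$. This suggests replacing $\omega^{n,i+1}$ in the inner product by a stage difference. I will first write
\[
\omega^{n,i+1}=\omega^{n,j}+\sum_{q=j}^{i}\delta_\tau\omega^{n,q+1},
\]
so that, after the orthogonality kills the $\omega^{n,j}$ part, only stage increments of size $\mynormt{\delta_\tau\omega}$ remain. That is the origin of the factor $\tau_n\sum_i\mynormt{\delta_\tau\omega^{n,i+1}}^2$ on the right.

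A cleaner route is to use the alternative antisymmetric form. For fixed $j$, the map $w\mapsto \myinnerb{\mathtt{c}_h(\myvec{u}^{n,j},\omega^{n,j}),w}$ is a bilinear expression in $(\omega^{n,j},w)$ that is antisymmetric in the two slots. By \eqref{discrete: skew symmetric form} and the divergence-free property \eqref{discrete: divergence-free}, it equals
\[
\tfrac12\myinnerb{\myvec{u}^{n,j}\cdot\nabla_h\omega^{n,j},w}-\tfrac12\myinnerb{\myvec{u}^{n,j}\omega^{n,j},\nabla_h w}.
\]
Taking $w=\omega^{n,i+1}-\omega^{n,j}$ and applying antisymmetry, the expression can be written as $-\myinnerb{\mathtt{c}_h(\myvec{u}^{n,j},w),\omega^{n,j}}$ plus terms in which the derivative falls on $w$ or on $\omega^{n,i+1}$.

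I will then bound the nonlinear term in the form
\[
\absb{\myinnerb{\mathtt{c}_h(\myvec{u}^{n,j},\omega^{n,j}),\omega^{n,i+1}}}\le \ck_\Omega\,\mynormb{\myvec{\omega}_c^{n,j}}_{H^\delta}\,\mynormt{w}\,\mynormt{\nabla_h\omega^{n,i+1}},
\]
where the derivative is placed on $\omega^{n,i+1}$ and $w$ is measured in $L^2$. The ingredients are the continuous extension, Lemma~\ref{lemma: collocation interpolation} to control aliasing, the elliptic regularity estimate \eqref{elliptic regularity} giving $\myvec{u}_c^{n,j}\in H^{1+\delta}\subset L^\infty$, and the embedding $H^\delta\subset L^{2/(1-\delta)}$ combined with H\"older's inequality. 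The hypothesis $\mynormt{\myvec{\omega}_c^{n,j}}_{H^\delta}\le\cm_0$ then supplies the constant $\cm_0$.

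Summing over $i,j$ with the difference coefficients $\underline{\hat a}_{i+1,j}$, I will apply the convolution H\"older estimate, Lemma~\ref{lemma: bound quadratic form}(iii) with $\alpha=\beta=2$, to the sequences $\mynormt{\nabla_h\omega^{n,i+1}}$ and the stage increments. The increment $w$ is a sum of at most $s$ terms $\delta_\tau\omega$, so $\sum_i\mynormt{w_i}^2\le s^2\sum\mynormt{\delta_\tau\omega}^2$; this accounts for the factor $s^2$ (with the $1/8$ coming out of the Young inequality below). The result is a bound of the form
\[
2\tau_n\,\ck_\Omega\cm_0\sigma_{\mathrm{E}}\,s\,\Bigl(\sum_i\mynormt{\delta_\tau\omega^{n,i+1}}^2\Bigr)^{1/2}\Bigl(\sum_i\mynormt{\nabla_h\omega^{n,i+1}}^2\Bigr)^{1/2}.
\]
Young's inequality $2ab\le \frac{a^2}{4\epsilon}\cdots+\epsilon b^2$ then gives the stated estimate, after tracking the factor $1/2$ from the skew-symmetric form.

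The main obstacle is the trilinear estimate with the derivative on the $H^1$ factor and only the $H^\delta$ norm on $\omega^{n,j}$, uniformly in $h$. The difficulty is that pseudo-spectral products alias. I plan to handle this by passing to $\myvec{\omega}_c$ and $\myvec{u}_c$, using the identity $\myinnert{f,g}=(I_M(f),g_c)_{L^2}$ for grid functions, and invoking Lemma~\ref{lemma: collocation interpolation} on the degree-$2N$ products $\myvec{u}_c\omega_c$ and $\myvec{u}_c\cdot\nabla\omega_c$.
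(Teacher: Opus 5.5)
Your proposal is correct and follows essentially the paper's route. The paper also uses skew-symmetry and the discrete Green's formula to rewrite $2\myinnerb{\mathtt{c}_h(\myvec{u}^{n,j},\omega^{n,j}),\omega^{n,i+1}}$ as $2\myinnerb{\omega^{n,i+1}-\omega^{n,j},\mathtt{c}_h(\myvec{u}^{n,j},\omega^{n,i+1})}$, bounds $\mynormb{\mathtt{c}_h(\myvec{u}^{n,j},\omega^{n,i+1})}\le\ck_{\Omega}\mynormb{\myvec{\omega}_c^{n,j}}_{H^{\delta}}\mynormb{\nabla_h\omega^{n,i+1}}$ using Lemma~\ref{lemma: collocation interpolation}, H\"older, Sobolev embedding and elliptic regularity, telescopes the difference, and finishes with Lemma~\ref{lemma: bound quadratic form}(iii) ($\alpha=\beta=2$) and Young, exactly as you plan.

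The one step you leave loose is that identity. Your intermediate form $-\myinnerb{\mathtt{c}_h(\myvec{u}^{n,j},w),\omega^{n,j}}$ still puts the derivative on $w$ or on $\omega^{n,j}$, and $\nabla_h\omega^{n,1}$ is not in the dissipation. You need one more use of $\myinnerb{\mathtt{c}_h(\myvec{u}^{n,j},w),w}=0$ to replace $\omega^{n,j}$ by $\omega^{n,i+1}$, which gives the exact identity $\myinnerb{\mathtt{c}_h(\myvec{u}^{n,j},\omega^{n,j}),\omega^{n,i+1}}=\myinnerb{\mathtt{c}_h(\myvec{u}^{n,j},\omega^{n,i+1}),w}$, with no additional terms.
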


Now testing the stage difference form \eqref{scheme: general IERK-vorticity-differential}  with $2\omega_h^{n,i+1}$
and summing $i$ from $i=1$ to $k$, one has 
\begin{align}\label{stability: IERK-L2 product}
	\mynormb{\omega^{n,k+1}}^2&\,-\mynormb{\omega^{n,1}}^2+\sum_{i=1}^k\mynormb{\delta_{\tau}\omega^{n,i+1}}^2\\
	&\,+2\nu\tau_n\sum_{i=1}^k\sum_{j=1}^{i}
	\underline{a}_{i+1,j+1}\myinnerb{\nabla_h\omega^{n,j+1},\nabla_h\omega^{n,i+1}}\notag\\
	&\,=-2\tau_n\sum_{i=1}^k\sum_{j=1}^{i}
	\underline{\hat{a}}_{i+1,j}\myinnerb{\mathtt{c}_h(\myvec{u}^{n,j},\omega^{n,j}),\omega^{n,i+1}}\notag\\
	&\,+2\tau_n\sum_{i=1}^k\sum_{j=1}^{i}
	\underline{a}_{i+1,j+1}\myinnerb{g^{n,j+1},\omega^{n,i+1}}.\notag
\end{align}
With the help of Lemma \ref{lemma: bound quadratic form} (i), the last term at the left hand side of \eqref{stability: IERK-L2 product} can be bounded by
\begin{align*} 
	J_{L1}^{n,k+1}:=\,2\nu\tau_n\sum_{i=1}^k\sum_{j=1}^{i}
	\underline{a}_{i+1,j+1}\myinnerb{\nabla_h\omega^{n,j+1},\nabla_h\omega^{n,i+1}}\ge\,2\lambda_{\mathrm{I}}\nu\tau_n\sum_{i=1}^k\mynormb{\nabla_h\omega^{n,i+1}}^2.
\end{align*}
Applying Lemma \ref{lemma: nonlinear convection bound} with $\epsilon:=\epsilon_1\lambda_{\mathrm{I}}\nu$, 
the first term at the right hand side of \eqref{stability: IERK-L2 product} is bounded by
\begin{align*} 
	J_{L2}^{n,k+1}
	\le&\,\frac{s^2\ck_{\Omega}^2\cm_0^2\sigma_{\mathrm{E}}^2}{8\lambda_{\mathrm{I}}\epsilon_1\nu}
	\tau_n\sum_{i=1}^k\mynormb{\delta_{\tau}\omega^{n,i+1}}^2
	+\epsilon_1\lambda_{\mathrm{I}}\nu\tau_n\sum_{i=1}^k\mynormb{\nabla_h\omega^{n,i+1}}^2.
\end{align*}
Applying the Poincar\'{e} inequality, $\mynormb{\omega^{n,i+1}}\le \ck_{p1}\mynormb{\nabla_h\omega^{n,i+1}}$, 
the second term at the right hand side of \eqref{stability: IERK-L2 product} can be bounded by
\begin{align*} 
	J_{L3}^{n,k+1}
	\leq&\,2\sigma_{\mathrm{I}}\ck_{p1}\tau_n\sqrt{\sum_{i=1}^k\mynormb{g^{n,i+1}}^2}
	\sqrt{\sum_{i=1}^k\mynormb{\nabla_h\omega^{n,i+1}}^2} \\
	\leq&\, \frac{\sigma_{\mathrm{I}}^2\ck_{p1}^2}{\lambda_{\mathrm{I}}\epsilon_2\nu}\tau_n\sum_{i=1}^k\mynormb{g^{n,i+1}}^2
	+\epsilon_2\lambda_{\mathrm{I}}\nu\tau_n\sum_{i=1}^k\mynormb{\nabla_h\omega^{n,i+1}}^2.
\end{align*}
By collecting the above estimates on $J_{Lj}^{n,k+1}$ $(j=1,2,3)$, it follows from \eqref{stability: IERK-L2 product} that
\begin{align}\label{stability: IERK-L2 product2}
	\mynormb{\omega^{n,k+1}}^2-&\,\mynormb{\omega^{n,1}}^2+\sum_{i=1}^k\mynormb{\delta_{\tau}\omega^{n,i+1}}^2
	+(2-\epsilon_1-\epsilon_2)\lambda_{\mathrm{I}}\nu\tau_n\sum_{i=1}^k\mynormb{\nabla_h\omega^{n,i+1}}^2
	\\
	\le&\,\frac{s^2\ck_{\Omega}^2\cm_0^2\sigma_{\mathrm{E}}^2}{8\lambda_{\mathrm{I}}\epsilon_1\nu}
	\tau_n\sum_{i=1}^k\mynormb{\delta_{\tau}\omega^{n,i+1}}^2
	+\frac{\sigma_{\mathrm{I}}^2\ck_{p1}^2}{\lambda_{\mathrm{I}}\epsilon_2\nu}\tau_n\sum_{i=1}^k\mynormb{g^{n,i+1}}^2.
	\notag
\end{align}
By taking $\epsilon_1=\epsilon_2=1/8$ and assuming the maximum step size 
\begin{align}\label{cond: maximum time-stepL2}
	\tau_{\max}\le \frac{\lambda_{\mathrm{I}}\nu}{s^2\ck_{\Omega}^2\cm_0^2\sigma_{\mathrm{E}}^2},
\end{align}
one can apply the priori setting $\mynormb{g^{n,i}}\le \ck_g$ to get
\begin{align}\label{stability: IERK-L2 product3}
	\mynormb{\omega^{n,k+1}}^2+\,\tfrac{7}{4}\lambda_{\mathrm{I}}\nu\tau_n\sum_{i=1}^k\mynormb{\nabla_h\omega^{n,i+1}}^2
	\le\,\mynormb{\omega^{n,1}}^2	
	+\frac{8s\sigma_{\mathrm{I}}^2\ck_{p1}^2\ck_g^2}{\lambda_{\mathrm{I}}\nu}\tau_n.
\end{align}
With the help of  the Poincar\'{e} inequality, 
it follows from \eqref{stability: IERK-L2 product3} that
\begin{align*}
	\mynormb{\omega^{n,k+1}}^2+\tfrac{7\lambda_{\mathrm{I}}\nu\tau_n}{4\ck_{p1}}\mynormb{\omega^{n,k+1}}^2
	\le\,\mynormb{\omega^{n,1}}^2+\frac{8s\sigma_{\mathrm{I}}^2\ck_{p1}^2\ck_g^2}{\lambda_{\mathrm{I}}\nu}\tau_n.
\end{align*}
The damping-type discrete Gr\"{o}nwall inequality in Lemma \ref{lemma: decaying gronwall multi-stage RK} 
(with $\xi:=\frac{7\lambda_{\mathrm{I}}\nu}{8\ck_{p1}}$) yields
\begin{align}\label{stability: IERK-L2 product4}
	\mynormb{\omega^{m,l+1}}^2
	\le e^{-\tfrac{7\lambda_{\mathrm{I}}\nu}{8\ck_{p1}}t_m}
	\mynormb{\omega^{0}}^2
	+\frac{64s\sigma_{\mathrm{I}}^2\ck_{p1}^3\ck_g^2}{7\lambda_{\mathrm{I}}^2\nu^2}:=\cm_1,
\end{align}
where $\cm_{1}$ is independent of the priori  bound $\cm_0$.

\subsection{$H^1$ norm estimate}

Next lemma handles the nonlinear convection term in the $H^1$ semi-norm estimate of vorticity, 
see the proof in Section \ref{appendix: lemma3.2} of the supplementary material.

\begin{lemma} \label{lemma: nonlinear convection H2 bound}
	Assume that the spectral norm of $E_{s_{\mathrm{I}}}^{-1}A_{\mathrm{E}}$ is bounded by a constant $\sigma_{\mathrm{E}}>0$. If $\mynormb{\myvec{\omega}_c^{m,l+1}}_{2}\le \cm_1$ and $\mynormb{\myvec{\omega}_c^{n,j}}_{H^{\delta}}\le \cm_0$ for $1\le n\le m$ and $1\le j\le l$,
	then there exists positive constants $\hat{\ck}_{\Omega}(\cm_0)$ and $\bar{\ck}_{\Omega}$ (independent of $\cm_0$) such that
	\begin{align*} 
		&\, 2\tau_n\sum_{i=1}^k\sum_{j=1}^{i}
		\underline{\hat{a}}_{i+1,j}\myinnerb{\mathtt{c}_h(\myvec{u}^{n,j},\omega^{n,j}),\Delta_h\omega^{n,i+1}}
		\le\frac{\hat{\ck}_{\Omega}(\cm_0)s^2\sigma_{\mathrm{E}}^2\tau_n}{16\epsilon}
		\sum_{i=1}^k\mynormb{\delta_{\tau}\nabla\omega^{n,i+1}}^2\\
		&\,\hspace{0.5cm}+\epsilon\tau_n\sum_{i=1}^k\mynormb{\Delta_h\omega^{n,i+1}}^2+\frac{\bar{\ck}_{\Omega}s^2\sigma_{\mathrm{E}}^2\tau_n}{16\epsilon}\quad\text{for any $\epsilon>0$, $1\le k\le l$ and $1\le n\le m$.}
	\end{align*}		
\end{lemma}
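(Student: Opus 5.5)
The plan is to mirror the proof of Lemma \ref{lemma: nonlinear convection bound}, with the test function $\omega^{n,i+1}$ replaced by $\Delta_h\omega^{n,i+1}$.

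\textbf{Step 1 (stage splitting).} For $j\le i$ write $\omega^{n,j}=\omega^{n,i+1}-\sum_{r=j}^{i}\delta_\tau\omega^{n,r+1}$, and split $\myvec{u}^{n,j}$ in the same way. This gives
\[
\mathtt{c}_h(\myvec{u}^{n,j},\omega^{n,j})=\mathtt{c}_h(\myvec{u}^{n,i+1},\omega^{n,i+1})+R^{n,i,j},
\]
where $R^{n,i,j}$ is a sum of terms, each linear in some difference $\delta_\tau\omega^{n,r+1}$ or $\delta_\tau\myvec{u}^{n,r+1}$. The number of such terms is at most $s$, which explains the factor $s^2$ in the final bound.

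\textbf{Step 2 (the "diagonal" part).} This is the term $\myinnert{\mathtt{c}_h(\myvec{u}^{n,i+1},\omega^{n,i+1}),\Delta_h\omega^{n,i+1}}$. In the continuous 2D periodic setting, $\myinnert{\myvec{u}\cdot\nabla\omega,\Delta\omega}=0$ for the vorticity–stream formulation. At the pseudo-spectral level it does not vanish exactly, because of aliasing. I therefore estimate it directly. Pass to the continuous extensions and use Lemma \ref{lemma: collocation interpolation} to control $\|I_M(\cdot)\|$. Then apply H\"older's inequality, the Sobolev embedding and elliptic regularity \eqref{elliptic regularity}:
\[
\|\mathtt{c}_h(\myvec{u},\omega)\|\le C\|\myvec{u}_c\|_{L^\infty}\|\nabla\omega_c\|+C\|\omega_c\|_{L^4}\|\nabla\myvec{u}_c\|_{L^4}.
\]
Next bound $\|\myvec{u}_c\|_{L^\infty}\le C\|\omega_c\|_{H^{\delta}}$, and interpolate $\|\nabla\omega_c\|\le C\|\omega_c\|^{1/2}\|\Delta\omega_c\|^{1/2}$. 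At stage $i+1=l+1$ only the $L^2$ bound $\cm_1$ is available, so there I use Gagliardo–Nirenberg interpolation between $\cm_1$ and $\|\Delta_h\omega\|$. This yields a product $\cm_1^{a}\|\Delta_h\omega\|^{2-\theta}$ with $\theta>0$, and Young's inequality turns it into $\frac{\epsilon}{2}\|\Delta_h\omega\|^2$ plus a constant depending only on $\cm_1$. This is the source of the $\cm_0$-independent term $\bar\ck_\Omega$.

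\textbf{Step 3 (the difference part).} This part carries the coefficients $\underline{\hat a}_{i+1,j}$. Apply Lemma \ref{lemma: bound quadratic form}(iii) with $\alpha=\beta=2$. Each term of $R^{n,i,j}$ is bounded in $L^2$ either by $C\cm_0\|\nabla\delta_\tau\omega\|$ or by $C\|\delta_\tau\omega\|_{H^1}\|\omega^{n,j}\|_{H^{1+\delta}}$. The latter is not available, so I instead bound it by $\hat\ck_\Omega(\cm_0)\|\nabla\delta_\tau\omega\|$, using the hypothesis $\|\omega_c^{n,j}\|_{H^\delta}\le\cm_0$ together with Sobolev embedding, where $\myvec{u}$ lies in $H^{1+\delta}\subset L^\infty$. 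The Cauchy–Schwarz and Young steps then give
\[
\frac{\hat\ck_\Omega s^2\sigma_{\mathrm E}^2}{16\epsilon}\,\tau_n\sum_{i=1}^{k}\|\delta_\tau\nabla\omega^{n,i+1}\|^2+\frac{\epsilon}{2}\,\tau_n\sum_{i=1}^{k}\|\Delta_h\omega^{n,i+1}\|^2 .
\]

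\textbf{Main obstacle.} The hard step is Step 2. There the cancellation fails because of aliasing, and the $H^\delta$ control is only known for $j\le l$, not at stage $l+1$. This is why the argument must rely on $\cm_1$ and a subcritical interpolation exponent. If direct interpolation proves too crude, my first fallback is to use the exact cancellation for the projected (dealiased) part and estimate only the aliasing remainder $(I-P)$ via Lemma \ref{lemma: collocation interpolation}.
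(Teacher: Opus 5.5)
Your Step 2 matches the paper's treatment of the diagonal term. So does the part of Step 3 where $\myvec{u}^{n,j}$ multiplies the vorticity difference, bounded by $\ck_{\Omega1}\cm_0\|\nabla_h(\omega^{n,i+1}-\omega^{n,j})\|\,\|\Delta_h\omega^{n,i+1}\|$. But Step 3 fails for the remaining piece, where a \emph{velocity} difference multiplies a full vorticity stage.

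You claim its $L^2$ norm is at most $\hat{\ck}_{\Omega}(\cm_0)\|\nabla_h\delta_\tau\omega\|$ using only the $H^\delta$ bound. That puts $L^\infty$ on the wrong factor. The velocity difference is indeed in $L^\infty$, but the other factor is $\nabla_h\omega^{*}$. It appears in $\myvec{u}\cdot\nabla_h\omega^*$, and also in the $H^1$ norm of $\myvec{u}\,\omega^*$ needed to treat $\nabla_h\cdot(\myvec{u}\,\omega^*)$ via Lemma \ref{lemma: collocation interpolation}. An $H^\delta$ bound gives no $h$-uniform control of $\nabla_h\omega^*$.

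Concretely, take $\omega^{n,j}=K^{-\delta}\sin(Kx)$ and $\omega^{n,i+1}=\omega^{n,j}+\sin y$. Then the velocity difference is $(\cos y,0)$, $\|\omega^{n,j}\|_{H^\delta}=O(1)$, and $\|\nabla_h(\omega^{n,i+1}-\omega^{n,j})\|$ is fixed. Yet $\|\mathtt{c}_h(\myvec{u}^{n,i+1}-\myvec{u}^{n,j},\omega^{n,j})\|\sim K^{1-\delta}\to\infty$.

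The bilinear splitting also forces a choice, and your Step 3 mixes the two options. Either $\myvec{u}^{n,j}$ goes with the vorticity difference and the velocity difference goes with $\omega^{n,i+1}$, or $\myvec{u}^{n,i+1}$ goes with the vorticity difference and the velocity difference goes with $\omega^{n,j}$. Only the first is workable: $\myvec{u}^{n,i+1}$ has no $\cm_0$ bound when $i=l$, and $\|\Delta_h\omega^{n,1}\|$ is not among the dissipation terms.

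The missing idea is how the paper handles $-2\langle\mathtt{c}_h(\myvec{u}^{n,i+1}-\myvec{u}^{n,j},\omega^{n,i+1}),\Delta_h\omega^{n,i+1}\rangle$. The gradient of $\omega^{n,i+1}$ is paid for by the dissipation, $\|\nabla_h\omega^{n,i+1}\|\lesssim\cm_1^{1/2}\|\Delta_h\omega^{n,i+1}\|^{1/2}$. The velocity difference is bounded in $L^\infty$ by the $H^\delta$ norm of the vorticity difference, interpolated between its $L^2$ bound and its gradient. This gives the bound $\ck_{\Omega2}\cm_1^{3/2}\|\nabla_h(\omega^{n,i+1}-\omega^{n,j})\|^{\delta}\|\Delta_h\omega^{n,i+1}\|^{3/2}$.

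This bound is superlinear in $\|\Delta_h\omega^{n,i+1}\|$, so it cannot be closed with $\alpha=\beta=2$. Instead the paper uses:
\begin{itemize}
\item Lemma \ref{lemma: bound quadratic form}(iii) with $\alpha=4$, $\beta=4/3$;
\item Young's inequality with exponents $4$ and $4/3$;
\item the elementary bound $x^{4\delta}\le(1-2\delta)+2\delta x^2$, which is where $\delta\le 1/2$ is needed.
\end{itemize}
The sublinear power $\delta$ is essential. If you bounded the velocity difference in $L^\infty$ directly by $\|\nabla_h(\omega^{n,i+1}-\omega^{n,j})\|$, Young would produce its fourth power, which cannot be absorbed.

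This cross term contributes $\cm_1$-dependent (not $\cm_0$-dependent) pieces to both $\hat{\ck}_\Omega$ and $\bar{\ck}_\Omega$. So $\bar{\ck}_\Omega$ does not come from the diagonal term alone. The real obstacle is this cross term, not your Step 2.
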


Now testing the stage difference form \eqref{scheme: general IERK-vorticity-differential}  with $-2\Delta_h\omega_h^{n,i+1}$
and summing $i$ from $i=1$ to $k$, one has 
\begin{align}\label{stability: IERK-H1 product}
	\mynormb{\nabla_h\omega^{n,k+1}}^2&\,-\mynormb{\nabla_h\omega^{n,1}}^2+\sum_{i=1}^k\mynormb{\delta_{\tau}\nabla_h\omega^{n,i+1}}^2\\
	&\,+2\nu\tau_n\sum_{i=1}^k\sum_{j=0}^{i}
	\underline{a}_{i+1,j+1}\myinnerb{\Delta_h\omega^{n,j+1},\Delta_h\omega^{n,i+1}}\notag\\
	&\,=2\tau_n\sum_{i=1}^k\sum_{j=1}^{i}
	\underline{\hat{a}}_{i+1,j}\myinnerb{\mathtt{c}_h(\myvec{u}^{n,j},\omega^{n,j}),\Delta_h\omega^{n,i+1}}\notag\\
	&\,-2\tau_n\sum_{i=1}^k\sum_{j=1}^{i}
	\underline{a}_{i+1,j+1}\myinnerb{g^{n,j+1},\Delta_h\omega^{n,i+1}}.\notag
\end{align}
With the help of Lemma \ref{lemma: bound quadratic form} (i), the last term at the left hand side of \eqref{stability: IERK-H1 product} can be bounded by
\begin{align*} 
	J_{H1}^{n,k+1}:=\,2\nu\tau_n\sum_{i=1}^k\sum_{j=1}^{i}
	\underline{a}_{i+1,j+1}\myinnerb{\Delta_h\omega^{n,j+1},\Delta_h\omega^{n,i+1}}
	\ge\,2\lambda_{\mathrm{I}}\nu\tau_n\sum_{i=1}^k\mynormb{\Delta_h\omega^{n,i+1}}^2.
\end{align*}
Applying Lemma \ref{lemma: nonlinear convection bound} with $\epsilon:=\epsilon_3\lambda_{\mathrm{I}}\nu$, 
the first term at the right hand side of \eqref{stability: IERK-H1 product} is bounded by
\begin{align*} 
	J_{H2}^{n,k+1}
	\le&\,\frac{\hat{\ck}_{\Omega}(\cm_0)s^2\sigma_{\mathrm{E}}^2\tau_n}{16\epsilon_3\lambda_{\mathrm{I}}\nu}
	\sum_{i=1}^k\mynormb{\delta_{\tau}\nabla\omega^{n,i+1}}^2
	+\epsilon_3\lambda_{\mathrm{I}}\nu\tau_n\sum_{i=1}^k\mynormb{\Delta_h\omega^{n,i+1}}^2
	+\frac{\bar{\ck}_{\Omega}s^2\sigma_{\mathrm{E}}^2\tau_n}{16\epsilon_3\lambda_{\mathrm{I}}\nu}
\end{align*}
Applying Lemma \ref{lemma: bound quadratic form} (ii),
the second term at the right hand side of \eqref{stability: IERK-H1 product} can be bounded by
\begin{align*} 
	J_{H3}^{n,k+1}
	\leq&\,2\sigma_{\mathrm{I}}\tau_n\sqrt{\sum_{i=1}^k\mynormb{g^{n,i+1}}^2}
	\sqrt{\sum_{i=1}^k\mynormb{\Delta_h\omega^{n,i+1}}^2} \\
	\leq&\, \frac{\sigma_{\mathrm{I}}^2}{\lambda_{\mathrm{I}}\epsilon_4\nu}\tau_n\sum_{i=1}^k\mynormb{g^{n,i+1}}^2
	+\epsilon_4\lambda_{\mathrm{I}}\nu\tau_n\sum_{i=1}^k\mynormb{\Delta_h\omega^{n,i+1}}^2.
\end{align*}
By collecting the above estimates on $J_{Hj}^{n,k+1}$ $(j=1,2,3)$, it follows from \eqref{stability: IERK-H1 product} that
\begin{align*}
	\mynormb{\nabla_h\omega^{n,k+1}}^2-&\,\mynormb{\nabla_h\omega^{n,1}}^2
	+(2-\epsilon_3-\epsilon_4)\lambda_{\mathrm{I}}\nu\tau_n\sum_{i=1}^k\mynormb{\Delta_h\omega^{n,i+1}}^2
	\notag\\
	\le&\,-\sum_{i=1}^k\mynormb{\delta_{\tau}\nabla_h\omega^{n,i+1}}^2+\frac{\hat{\ck}_{\Omega}(\cm_0)s^2\sigma_{\mathrm{E}}^2}{16\epsilon_3\lambda_{\mathrm{I}}\nu}
	\tau_n\sum_{i=1}^k\mynormb{\delta_{\tau}\nabla_h\omega^{n,i+1}}^2\\
	&\,+\frac{\ck_g^2s\sigma_{\mathrm{I}}^2\tau_n}{\epsilon_4\lambda_{\mathrm{I}}\nu}
	+\frac{\bar{\ck}_{\Omega}s^2\sigma_{\mathrm{E}}^2\tau_n}{16\epsilon_3\lambda_{\mathrm{I}}\nu},
\end{align*}
where the priori setting $\mynormb{g^{n,i}}\le \ck_g$ was used.
By taking $\epsilon_3=\epsilon_4=1/8$ and assuming 
\begin{align}\label{cond: maximum time-stepH1}
	\tau_{\max}\le \frac{2\lambda_{\mathrm{I}}\nu}{\hat{\ck}_{\Omega}(\cm_0)s^2\sigma_{\mathrm{E}}^2},
\end{align}
one gets
\begin{align}\label{stability: IERK-H1 product3}
	\mynormb{\nabla_h\omega^{n,k+1}}^2\!
	+\!\tfrac{7\lambda_{\mathrm{I}}
		\nu\tau_n}{4}\sum_{i=1}^k\mynormb{\Delta_h\omega^{n,i+1}}^2
	\!\le\!\mynormb{\nabla_h\omega^{n,1}}^2	
	\!+\!\frac{16\ck_g^2\sigma_{\mathrm{I}}^2s\!+\!\bar{\ck}_{\Omega}\sigma_{\mathrm{E}}^2s^2}
	{2\lambda_{\mathrm{I}}\nu}\tau_n.	
\end{align}
With the help of  the Poincar\'{e} inequality, 
$\mynormb{\nabla_h\omega^{n,i+1}}\le \ck_{p2}\mynormb{\Delta_h\omega^{n,i+1}}$, 
it follows from \eqref{stability: IERK-H1 product3} that
\begin{align*}
	\mynormb{\nabla_h\omega^{n,k+1}}^2+\tfrac{7\lambda_{\mathrm{I}}\nu\tau_n}{4\ck_{p2}}\mynormb{\nabla_h\omega^{n,k+1}}^2
	\le\,\mynormb{\nabla_h\omega^{n,1}}^2
	+\frac{16\ck_g^2\sigma_{\mathrm{I}}^2s+\bar{\ck}_{\Omega}\sigma_{\mathrm{E}}^2s^2}
	{2\lambda_{\mathrm{I}}\nu}\tau_n.
\end{align*}
The damping-type discrete Gr\"{o}nwall inequality in Lemma \ref{lemma: decaying gronwall multi-stage RK} 
(with $\xi:=\frac{7\lambda_{\mathrm{I}}\nu}{8\ck_{p2}}$) yields
\begin{align}\label{stability: IERK-H1 product4}
	\mynormb{\nabla_h\omega^{m,l+1}}^2
	\le\,e^{-\tfrac{7\lambda_{\mathrm{I}}\nu}{8\ck_{p2}}t_m}
	\mynormb{\nabla_h\omega^{0}}^2
	\,+\frac{64\ck_g^2\sigma_{\mathrm{I}}^2s+4\bar{\ck}_{\Omega}\sigma_{\mathrm{E}}^2s^2}
	{7\lambda_{\mathrm{I}}^2\nu^2}\ck_{p2}:=\cm_{2},
\end{align}
where $\cm_{2}$ (involving the constant $\bar{\ck}_{\Omega}$ defined in Lemma \ref{lemma: nonlinear convection H2 bound}) is dependent on  the upper bound $\cm_1$ of $L^2$ norm for the vorticity, see \eqref{stability: IERK-L2 product4},  but always independent of the priori  bound $\cm_0$.

\subsection{Stability from recovering of the priori $H^{\delta}$ assumption}
The $L^2$ norm estimate \eqref{stability: IERK-L2 product4} says that there exists a global-in-time constant $\cm_{1}$, independent of $\cm_0$, such that 
$\mynormb{\omega^{m,l+1}}\le \cm_{1}$. The $H^1$ semi-norm estimate \eqref{stability: IERK-H1 product4} says that there exists a global-in-time constant $\cm_{2}$, independent of $\cm_0$, such that 
$\mynormb{\nabla_h\omega^{m,l+1}}\le \cm_{2}$. Then
\begin{align*}
	\mynormb{\myvec{\omega}_c^{m,l+1}}_{H^{\delta}}
	\le&\, \ck_{\Omega,\delta}\mynormb{\myvec{\omega}_c^{m,l+1}}^{1-\delta}
	\mynormb{\myvec{\omega}_c^{m,l+1}}_{H^1}^{\delta}\\
	\le&\, \ck_{\Omega,\delta}\ck_{p,\delta}\mynormb{\myvec{\omega}_c^{m,l+1}}^{1-\delta}
	\mynormb{\nabla\myvec{\omega}_c^{m,l+1}}^{\delta}
	\le \ck_{\Omega,\delta}\ck_{p,\delta}\cm_{1}^{1-\delta}\cm_{2}^{\delta}.
\end{align*}
Thus by choosing $\delta=1/2$ and taking the constant $\cm_0:=\ck_{\Omega,\delta}\ck_{p,\delta}\sqrt{\cm_{1}\cm_{2}}$, we recover the priori $H^{\delta}$ assumption \eqref{estimate: H-delta-bound} for the case $n=m$ and $j=l+1$ under the  maximum time-step size restrictions \eqref{cond: maximum time-stepL2} and \eqref{cond: maximum time-stepH1}, that is,
\begin{align}\label{cond: maximum time-step}
	\tau_{\max}\le \frac{\lambda_{\mathrm{I}}\nu}{\sigma_{\mathrm{E}}^2s^2}\max\Big\{\frac{1}{\ck_{\Omega}^2\cm_0^2},\frac{2}{\hat{\ck}_{\Omega}(\cm_0)}\Big\},
\end{align}
where the constant $\hat{\ck}_{\Omega}(\cm_0)$, defined in Lemma \ref{lemma: nonlinear convection H2 bound}, is always dependent on the priori  bound $\cm_0$.
The induction for the $H^{\delta}$ norm boundedness \eqref{estimate: H-delta-bound} is completed.


As a by-product, we prove the following theorem, which 
says that the high-order IERK method \eqref{scheme: general IERK-vorticity}-\eqref{scheme: general IERK-velocity} is unconditionally stable when  $\nu$ is not too small.
\begin{theorem} \label{thm: IERK long-time stability}
	Assume that  $\mathcal{S}(E_{s_{\mathrm{I}}}^{-1}A_{\mathrm{I}})$ has a finite minimum eigenvalue $\lambda_{\mathrm{I}}>0$, the spectral
	norms of $E_{s_{\mathrm{I}}}^{-1}A_{\mathrm{I}}$ and $E_{s_{\mathrm{I}}}^{-1}A_{\mathrm{E}}$ are bounded by the positive constants $\sigma_{\mathrm{I}}$ and $\sigma_{\mathrm{E}}$, respectively.
	Let $\omega_h^{n,i}$ be the discrete solution of the IERK method \eqref{scheme: general IERK-vorticity}-\eqref{scheme: general IERK-velocity} with an initial data $\omega_0\in H^2$ and the exterior force $g\in L^{\infty}\brab{\mathbb{R}^+;L^2}$ bounded by $\ck_g$. Let $\myvec{\omega}_c^{n,j}$ is the continuous extension in space of $\omega_h^{n,j}$. If the the maximum step size $\tau_{\max}$ satisfies \eqref{cond: maximum time-step},  there exists a constant 
	$\cm_*$ such that 
	$$\mynormb{\myvec{\omega}_c^{n,j}}_{H^{1}}\le \cm_*=\cm_*\brab{\mynorm{\omega_0}_{H^2},\mynorm{g}_{L^2},\nu^{-1},s,\lambda_{\mathrm{I}}, \sigma_{\mathrm{I}},\sigma_{\mathrm{E}}}\quad\text{for $n\ge1$ and $1\le j\le s$.}$$
	In addition, the $L^2$ norm and $H^1$ semi-norm of the vorticity (equivalent to the $H^1$ and $H^2$ semi-norms of the velocity) has the following asymptotic decaying estimates
	\begin{align*}
		\mynormb{\omega^{n,j}}^2
		\le &\,e^{-\tfrac{7\lambda_{\mathrm{I}}\nu}{8\ck_{p1}}t_n}
		\mynormb{\omega^{0}}^2
		+\tfrac{64s\sigma_{\mathrm{I}}^2\ck_{p1}^3\ck_g^2}{7\lambda_{\mathrm{I}}^2\nu^2}:=\cm_1,\\
		\mynormb{\nabla_h\omega^{n,j}}^2
		\le &\,e^{-\tfrac{7\lambda_{\mathrm{I}}\nu}{8\ck_{p2}}t_n}
		\mynormb{\nabla_h\omega^{0}}^2
		+\tfrac{64\ck_g^2\sigma_{\mathrm{I}}^2s+4\bar{\ck}_{\Omega}\sigma_{\mathrm{E}}^2s^2}
		{7\lambda_{\mathrm{I}}^2\nu^2}\ck_{p2},
	\end{align*}	
	where $s$, $\lambda_{\mathrm{I}}$, $\sigma_{\mathrm{I}}$ and  $\sigma_{\mathrm{E}}$ are determined by the IERK method
	and the global-in-time constant $\bar{\ck}_{\Omega}$, defined in Lemma \ref{lemma: nonlinear convection H2 bound}, is also dependent on  the upper bound $\cm_1$ of $L^2$ norm for the vorticity.
\end{theorem}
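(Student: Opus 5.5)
The theorem follows from the induction argument already carried out in this section, so I would organize the proof around it. I would argue by induction over the lexicographic order of the stage pairs $(n,j)$, with $n\ge1$ and $1\le j\le s$, using the induction hypothesis \eqref{estimate: H-delta-induction hypothesis} with $\delta=\frac12$.

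The constants must be fixed before the induction starts, so that there is no circularity. First set
\[
\cm_1:=\mynorm{\omega^0}^2+\tfrac{64s\sigma_{\mathrm{I}}^2\ck_{p1}^3\ck_g^2}{7\lambda_{\mathrm{I}}^2\nu^2},
\]
which depends only on the data. Next set $\cm_2$ from \eqref{stability: IERK-H1 product4}, using $\mynorm{\nabla_h\omega^0}^2$. This requires only $\omega_0\in H^1$, and $\omega_0\in H^2$ is more than enough; $\bar{\ck}_{\Omega}$ depends on $\cm_1$ but not on $\cm_0$. Finally define $\cm_0:=\ck_{\Omega,\delta}\ck_{p,\delta}\sqrt{\cm_1\cm_2}$, enlarged if necessary so that it also dominates $\mynorm{\omega_c^0}_{H^{1/2}}$. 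Once $\cm_0$ is fixed, the step-size condition \eqref{cond: maximum time-step}, which contains $\hat{\ck}_{\Omega}(\cm_0)$, is a genuine restriction on $\tau_{\max}$ alone.

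The base case is $(n,j)=(1,1)$, where $\omega^{1,1}=\omega^0$. The induction step passes from $(m,l)$ to $(m,l+1)$; the transition from $(m,s)$ to $(m+1,1)$ is trivial because $\omega^{m+1,1}=\omega^{m,s}$.

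For the step itself I would repeat Steps 1--3.
\begin{enumerate}
\item Step 1: Lemma \ref{lemma: nonlinear convection bound}, together with Lemma \ref{lemma: bound quadratic form}(i),(ii) and the restriction \eqref{cond: maximum time-stepL2}, gives the stage inequality \eqref{stability: IERK-L2 product3}. Then Poincaré's inequality gives $(1+2\xi\tau_n)v_{n,k+1}\le v_{n,1}+c\tau_n$ with $v=\mynorm{\omega}^2$. This holds for every $n\le m$ and every admissible $k$, which is exactly hypothesis \eqref{lemProof: decaying gronwall assumption}. Lemma \ref{lemma: decaying gronwall multi-stage RK} therefore gives the first decay estimate at $(m,l+1)$. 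It also needs $\tau_{\max}\le1/\xi$; I would either fold this into \eqref{cond: maximum time-step} or note that it is implied for small $\tau$.
\item Step 2: the analogous argument, using Lemma \ref{lemma: nonlinear convection H2 bound} and \eqref{cond: maximum time-stepH1}, gives the $\nabla_h$ decay bound.
\item Step 3: interpolation between $L^2$ and $H^1$ (with discrete and continuous norms identified through Parseval, since $\omega_c\in\mathscr{F}_M$) recovers $\mynorm{\omega_c^{m,l+1}}_{H^{1/2}}\le\cm_0$ and closes the induction.
\end{enumerate}

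Once the induction closes, the two asymptotic estimates of the theorem hold for all $(n,j)$. Here $t_m$ is replaced by $t_n$, which is harmless because the stage estimate of Lemma \ref{lemma: decaying gronwall multi-stage RK} is stated with the exponent $t_m$ at every stage of the $m$-th step. The $H^1$ bound then follows from
\[
\mynorm{\omega_c^{n,j}}_{H^1}^2\le \mynorm{\omega^{n,j}}^2+\mynorm{\nabla_h\omega^{n,j}}^2\le \cm_1+\cm_2=:\cm_*^2,
\]
which depends only on the listed quantities. For example, $\mynorm{\nabla_h\omega^0}\le C\mynorm{\omega_0}_{H^2}$ by Lemma \ref{lemma: collocation interpolation} applied to the interpolated initial data.

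The main obstacle is the consistency of the induction. The estimates at the new stage $(m,l+1)$ use the $H^\delta$ bound only at earlier stages, but the Grönwall lemma requires the one-step inequality at all previous steps and stages, including partial sums up to $k=l$ in the current step. I would check that Lemmas \ref{lemma: nonlinear convection bound} and \ref{lemma: nonlinear convection H2 bound} invoke the hypothesis only at stages $j\le k\le l$. This holds because the explicit part is strictly lower triangular: $\underline{\hat a}_{i+1,j}$ involves only $j\le i$. Lemma \ref{lemma: nonlinear convection H2 bound} additionally needs the $L^2$ bound at $(m,l+1)$, which Step 1 has just provided. With that ordering the argument closes without circularity.
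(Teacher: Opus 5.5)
Your proposal is correct and is essentially the paper's own argument. It is the stage-wise induction on the $H^{1/2}$ bound: Step~1 uses Lemma~\ref{lemma: nonlinear convection bound} and the damping Gr\"{o}nwall Lemma~\ref{lemma: decaying gronwall multi-stage RK}; Step~2 uses Lemma~\ref{lemma: nonlinear convection H2 bound}, fed by the fresh $L^2$ bound at $(m,l+1)$; Step~3 interpolates with $\cm_0$ built from $\cm_1,\cm_2$. Your bookkeeping is more careful than the paper's presentation: the lexicographic ordering of stages, fixing $\cm_1$, $\cm_2$, $\cm_0$ and only then $\tau_{\max}$, and flagging the requirement $\tau_{\max}\le 1/\xi$ of Lemma~\ref{lemma: decaying gronwall multi-stage RK}, which \eqref{cond: maximum time-step} omits.

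Two small corrections. First, for \eqref{cond: maximum time-step} to imply both \eqref{cond: maximum time-stepL2} and \eqref{cond: maximum time-stepH1}, its $\max$ must be read as $\min$. Second, the bound $\mynorm{\nabla_h\omega^0}\le C\mynorm{\omega_0}_{H^2}$ comes from the standard interpolation estimate $\mynorm{I_Mv}_{H^1}\le C\mynorm{v}_{H^2}$, not from Lemma~\ref{lemma: collocation interpolation}, which is stated only for trigonometric polynomials; and since point values of $H^1$ functions are undefined in two dimensions, $\omega_0\in H^1$ would not suffice for collocated initial data, contrary to your aside.
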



	Note that, the above proof of Theorem \ref{thm: IERK long-time stability} can not be directly applied to derive the $H^2$ semi-norm estimate for the vorticity by testing the stage difference form \eqref{scheme: general IERK-vorticity-differential}  with $2\Delta_h^2\omega_h^{n,i+1}$. Actually, under the priori $H^{\delta}$ assumption \eqref{estimate: H-delta-bound},  the convolution term  $2\tau_n\sum_{i=1}^k\sum_{j=1}^{i}
	\underline{\hat{a}}_{i+1,j}\myinnerb{\mathtt{c}_h(\myvec{u}^{n,j},\omega^{n,j}),\Delta_h^2\omega^{n,i+1}}$
	of nonlinear convection can not be directly controlled by $\sum_{i=1}^k\mynormb{\delta_{\tau}\Delta_h\omega^{n,i+1}}^2+2\lambda_{\mathrm{I}}\nu\tau_n\sum_{i=1}^k\mynormb{\Delta_h\nabla_h\omega^{n,i+1}}^2 $. It seems that  a priori $H^{1+\delta}$ assumption on the stage boundedness of vorticity for some $\delta>0$ would be sufficient to establish the asymptotic decaying estimate in the $H^2$ semi-norm, cf. \cite{GottliebWang:2012}. That is, we need a new process of mathematical induction for the priori $H^{1+\delta}$ assumption and a new technical lemma (similar to Lemma \ref{lemma: nonlinear convection H2 bound}) to handle the following convolution term of nonlinear convection $$2\tau_n\sum_{i=1}^k\sum_{j=1}^{i}
	\underline{\hat{a}}_{i+1,j}\myinnerb{\nabla_h\mathtt{c}_h(\myvec{u}^{n,j},\omega^{n,j}),\Delta_h\nabla_h\omega^{n,i+1}},$$ while the technical details are left to interested readers. 		
	The long time stability in the $L^2$ and $H^m$ $(m\ge1)$ norms further leads to the convergence of the global attractors and invariant measures of the IERK scheme \eqref{scheme: general IERK-vorticity}-\eqref{scheme: general IERK-velocity} to those of the INSE at vanishing step sizes. Interested readers can referred to \cite{GottliebToneWangWangWirosoetisno:2012,Wang:2010,Wang:2012,Wang:2016} for details.

		In  Section \ref{sec: IERK methods} of the supplementary material, we apply the simplified order conditions in Table \ref{table: order condition} to construct three new parameterized IERK methods based on the requirement of Theorem \ref{thm: IERK long-time stability}, including the IERK(2,3;$c_2$), IERK(3,5;$a_{55}$) and IERK(4,7;$\hat{a}_{43}$) methods, where the notation IERK($p$,$s$;$\mu$) represents the  $p$-th order $s$-stage $\mu$-parameterized IERK method.

	\section{Numerical experiments}\label{sec: experiments}
	
	\subsection{Accuracy verification}
	\begin{example}\label{ex: WangCheng}
		Consider the INSE \eqref{cont: INSE-vorticity}-\eqref{cont: INSE-velocity} on  $\Omega=(0,2\pi)^2$ with the viscosity coefficient $\nu=0.5$. We prescribe an artificial time-dependent forcing term $g$ so that the system admits the exact solution $\psi=\frac12 \cos t\sin x\sin y, \ \omega=\cos t\sin x\sin y.$
		Always, the spatial operators are approximated by the Fourier pseudo-spectral method on a $128 \times 128$ uniform grid.
	\end{example}
	
	\begin{figure}[htb!]
		\centering
		\subfigure[IERK(2,3;0.35)]{
			\includegraphics[width=0.3\textwidth]{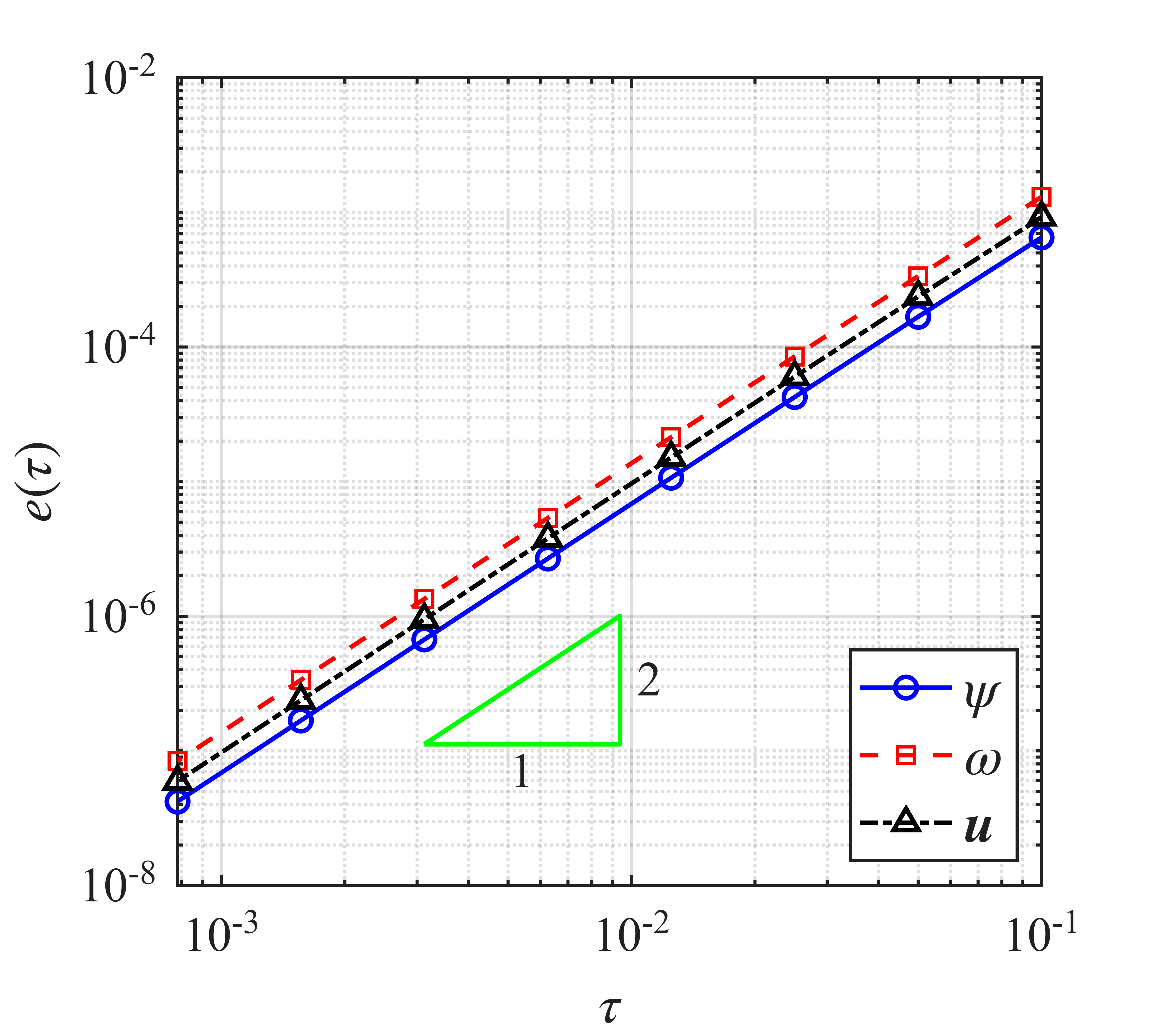}} 
		\subfigure[IERK(3,5;1.2)]{
			\includegraphics[width=0.3\textwidth]{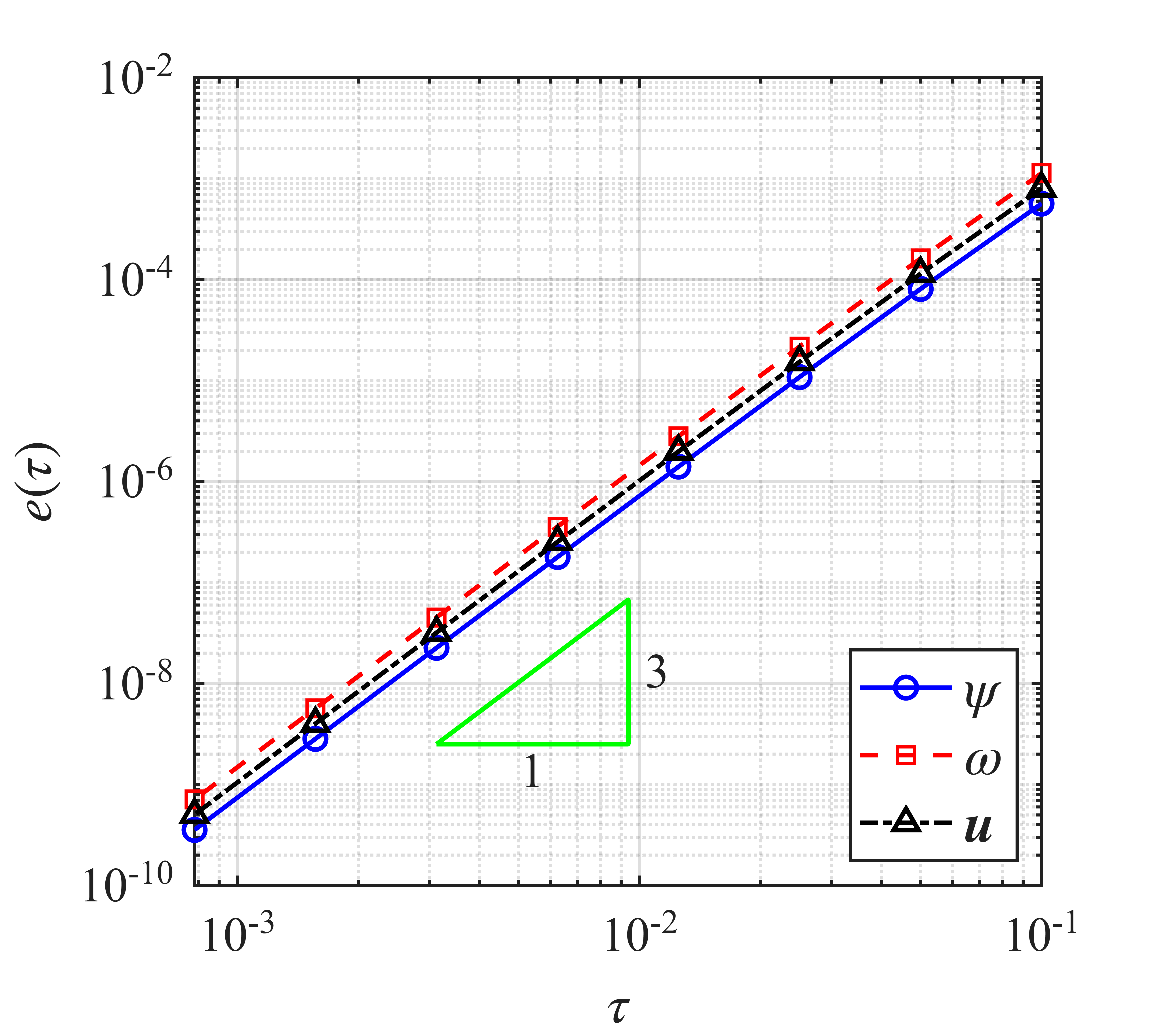}} 
		\subfigure[IERK(4,7;-0.8)]{
			\includegraphics[width=0.3\textwidth]{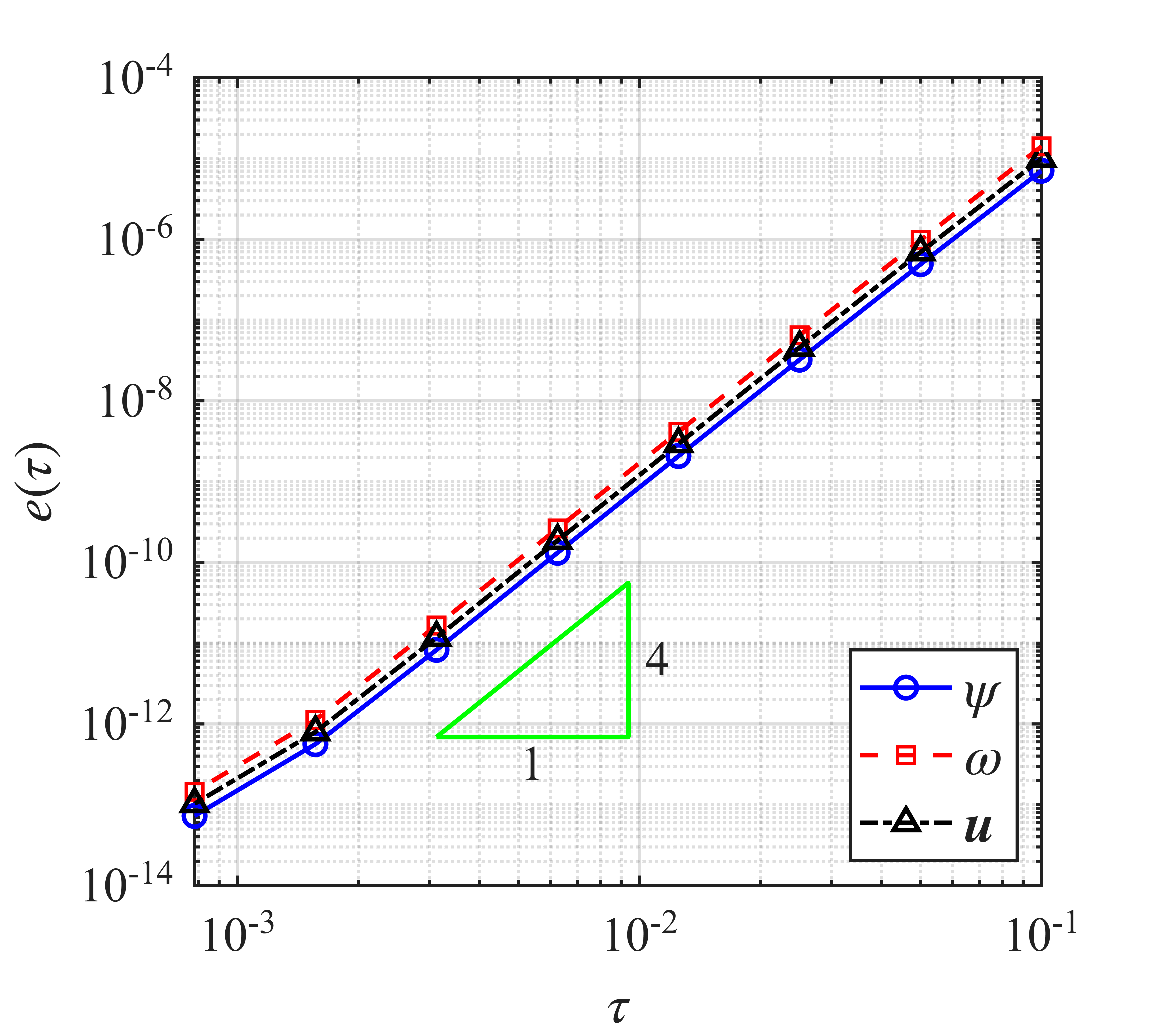}}
		\caption{Errors of IERK(2,3;0.35), IERK(3,5;1.2) and IERK(4,7;-0.8) schemes.}
		\label{fig: accuracy of Radau2-4}
	\end{figure}
	
		\begin{figure}[htb!]
		\centering
		\subfigure[IERK(2,3;$c_2$)]{
			\includegraphics[width=0.3\textwidth]{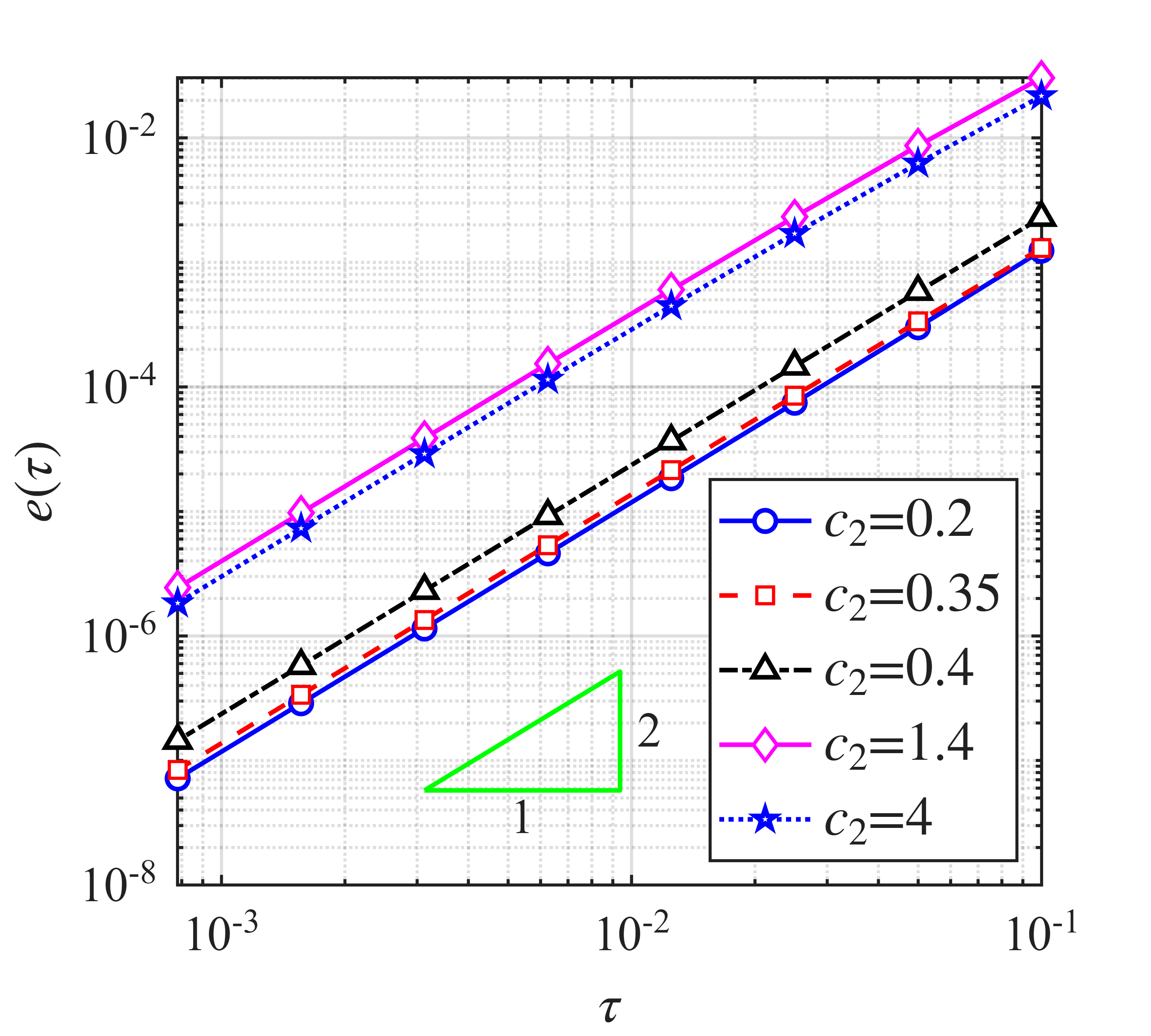}}
		\subfigure[IERK(3,5;$a_{55}$)]{
			\includegraphics[width=0.3\textwidth]{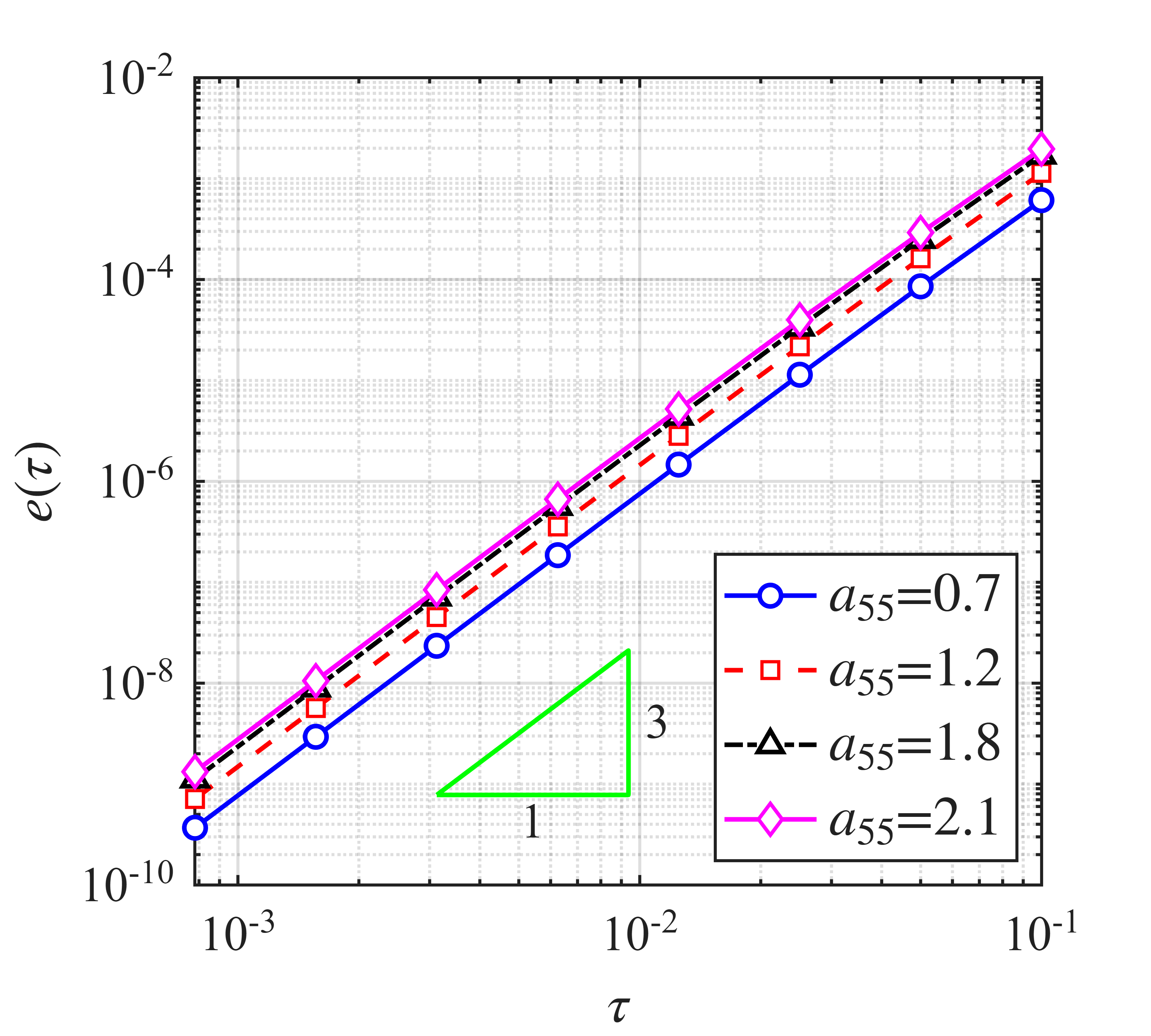}}
		\subfigure[IERK(4,7;$\hat{a}_{43}$)]{
			\includegraphics[width=0.3\textwidth]{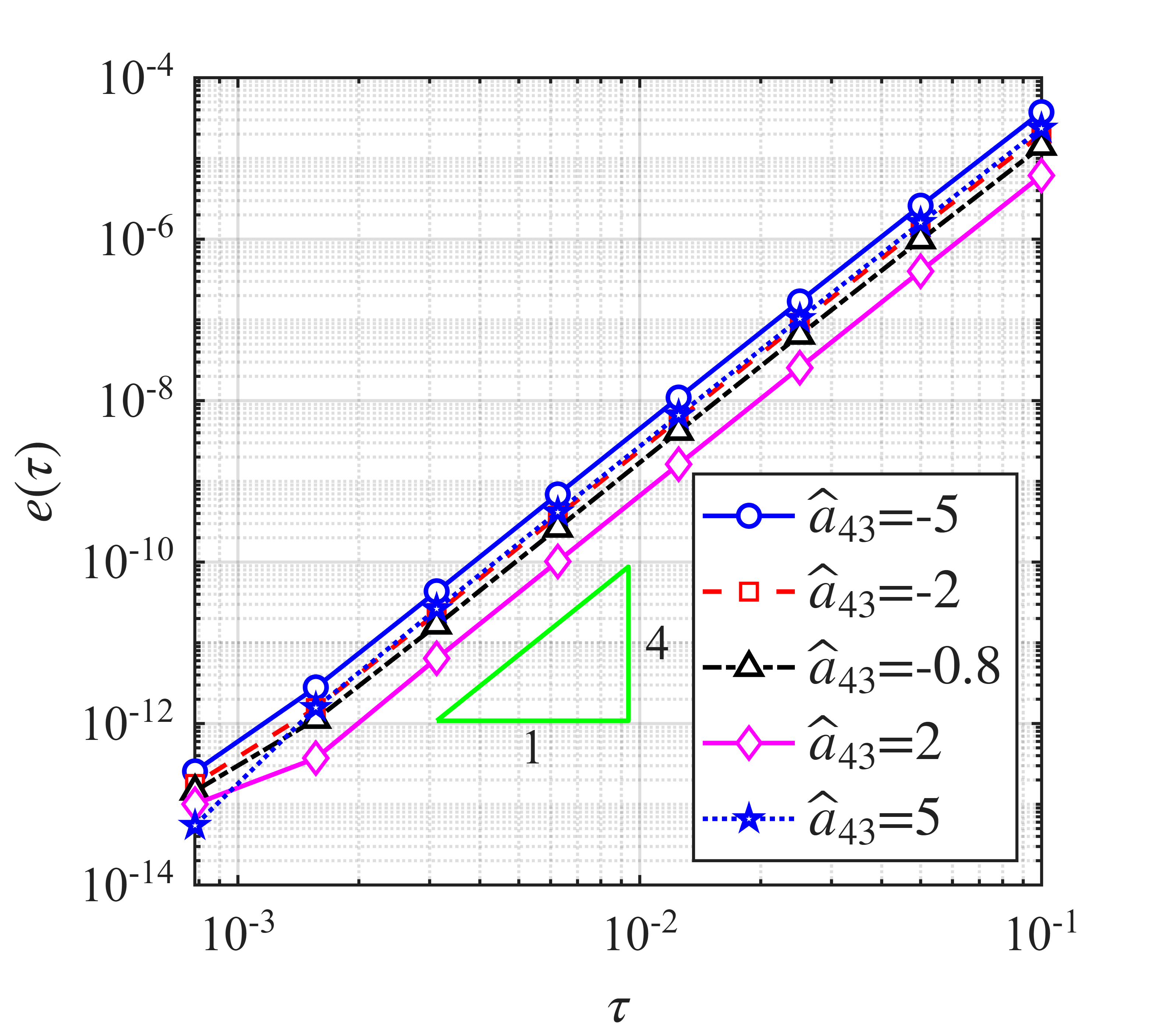}}
		\caption{Errors of IERK(2,3;$c_2$), IERK(3,5;$a_{55}$) and IERK(4,7;$\hat{a}_{43}$).}
		\label{fig: accuracy of other-IERK}
	\end{figure}
	
	We employ the IERK(2,3;0.35), IERK(3,5;1.2) and IERK(4,7;-0.8) as the representative second-, third-, and fourth-order methods, respectively. Numerical solutions are computed using time steps $\tau=2^{-k}/10$ for $0\leq k\leq 7$, and the errors are measured until the final time $T=1$. Figure \ref{fig: accuracy of Radau2-4} displays the $L^2$-norm errors of the vorticity, velocity and stream function. As expected, the three methods achieve the second-, third-, and fourth-order temporal convergence rates, respectively. In Figure \ref{fig: accuracy of other-IERK}, we further investigate the impact of the parameters by plotting the vorticity errors for the IERK(2,3;$c_2$), IERK(3,5;$a_{55}$) and IERK(4,7;$\hat{a}_{43}$) methods using different parameter values. It is observed that the choice of these method parameters may cause significant effects on the numerical precision. For this example, $c_2=0.2$, $a_{55}=0.7$ and $\hat{a}_{43}=2$ appear to be the best choice for their respective methods, as they yield the most accurate solutions. 
	Nonetheless, it remains mysterious to us. Maybe, the leading coefficients of truncation error vary with respect to these parameters.

	\subsection{Adaptive time-stepping strategies}
	
		\begin{figure}[htb!]
		\centering
		\subfigure[$\|\omega^{n}\|^{2}$ with $\tau_{\max}=1$]{
			\includegraphics[width=0.3\textwidth]{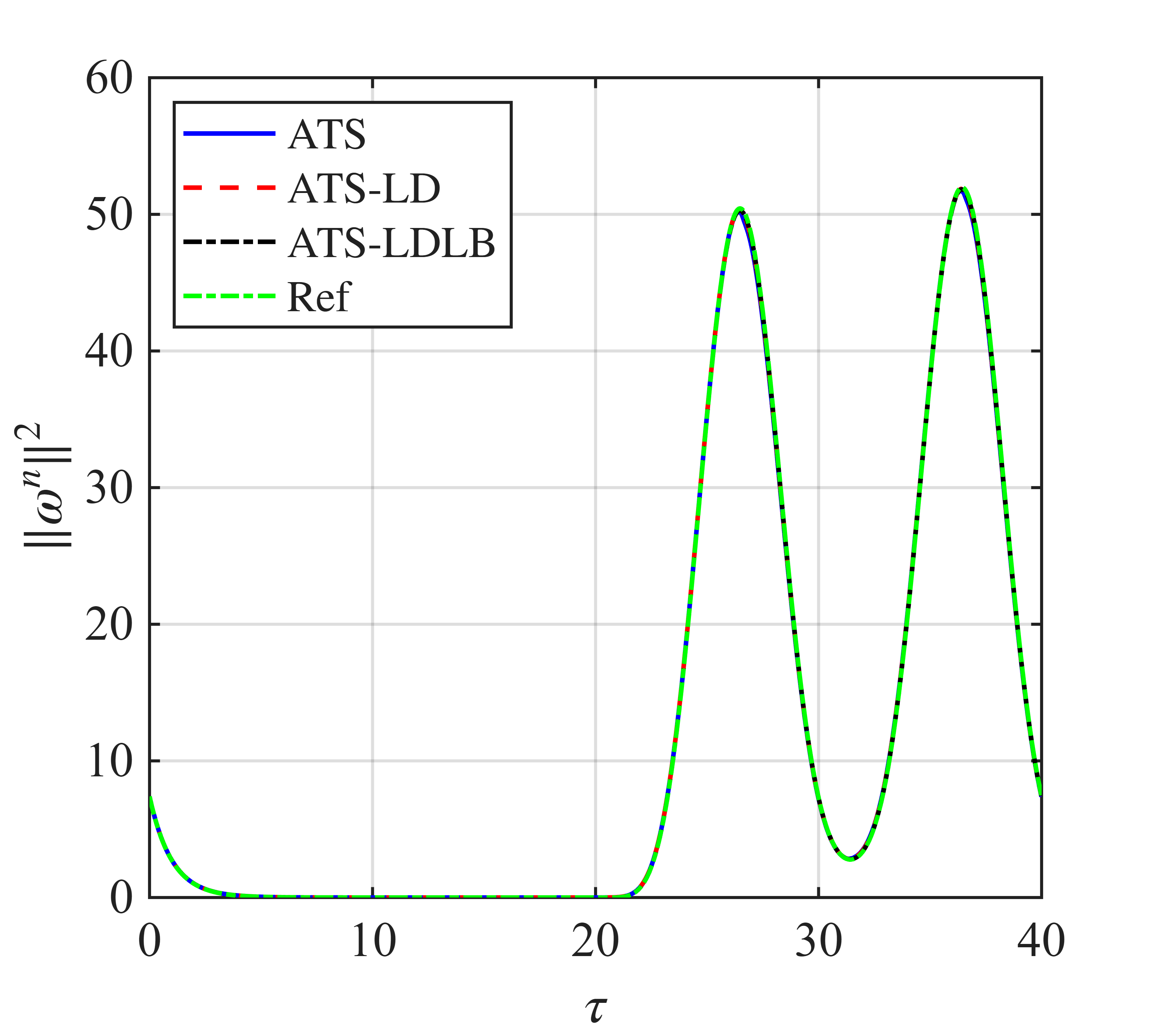}
		}
		\subfigure[$\tau_{n}$ with $\tau_{\max}=1$]{
			\includegraphics[width=0.3\textwidth]{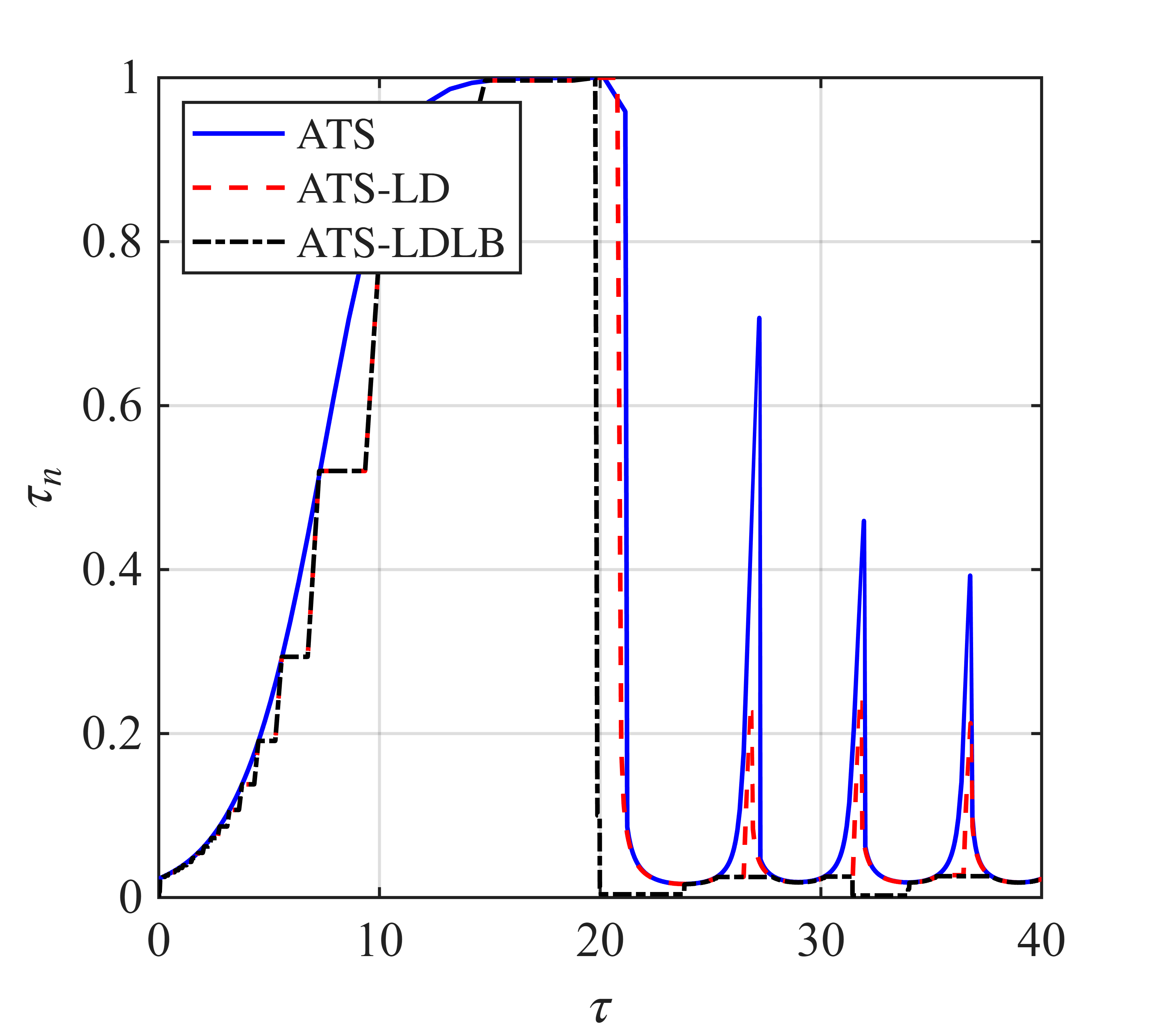}
		}
		\subfigure[$\|e^{n}_{\text{mix}}\|_{\infty}$ with $\tau_{\max}=1$]{
			\includegraphics[width=0.3\textwidth]{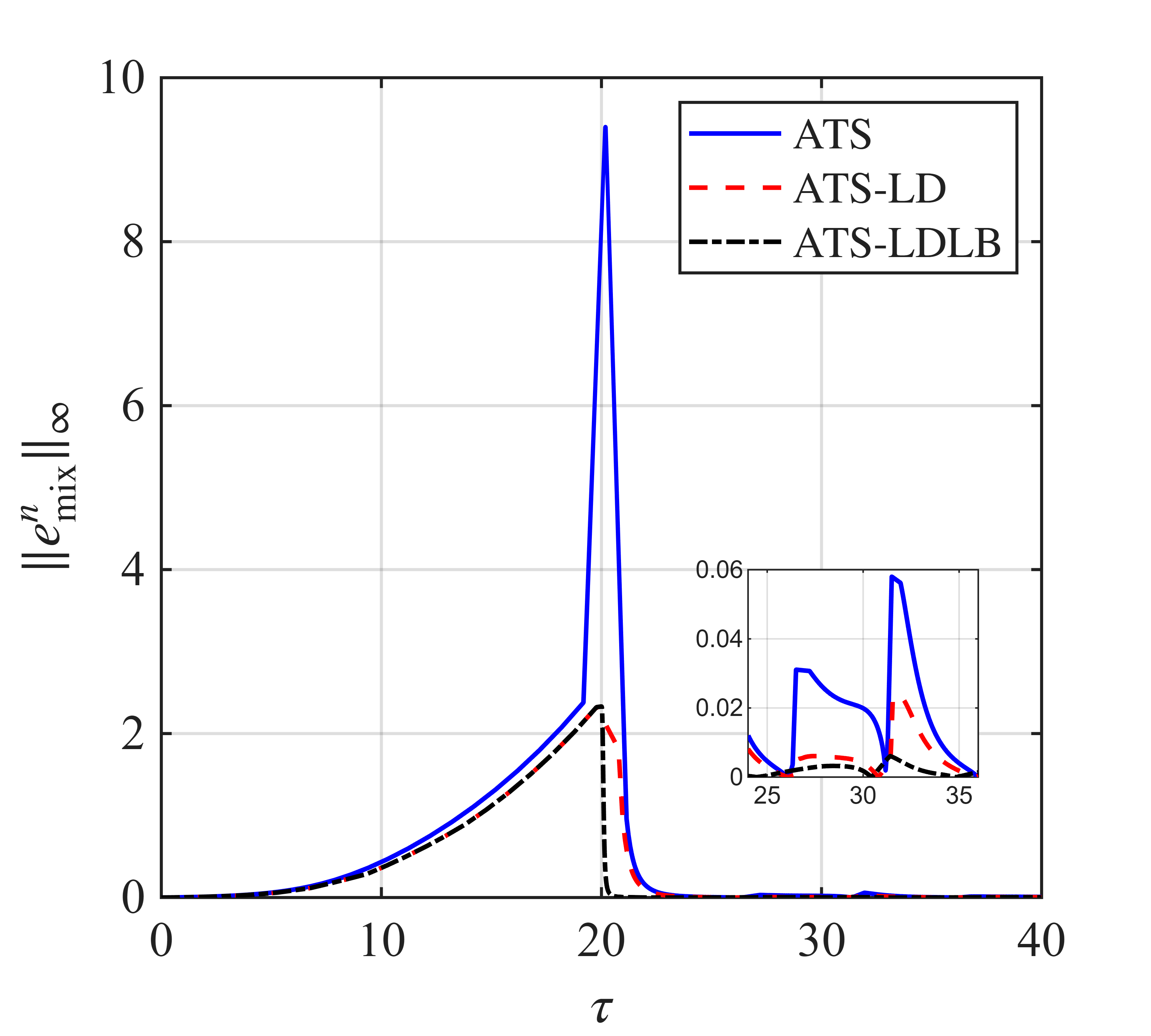}
		}
		\subfigure[$\|\omega^{n}\|^{2}$ with $\tau_{\max}=0.5$]{
			\includegraphics[width=0.3\textwidth]{IERK1_sin2T2_dt=0.5_enstrophy.png}
		}
		\subfigure[$\tau_{n}$ with $\tau_{\max}=0.5$]{
			\includegraphics[width=0.3\textwidth]{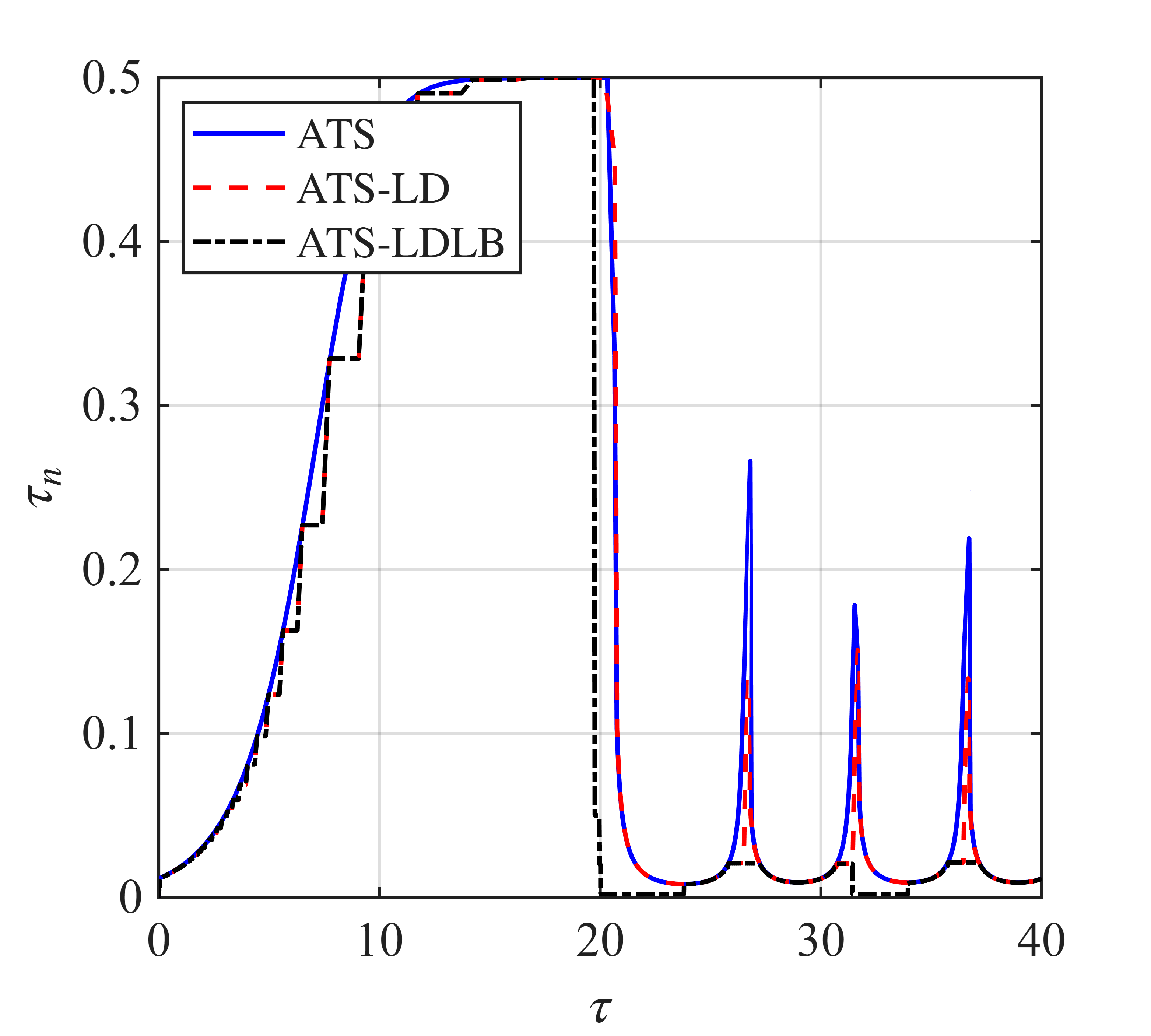}
		} 
		\subfigure[$\|e^{n}_{\text{mix}}\|_{\infty}$ with $\tau_{\max}=0.5$]{
			\includegraphics[width=0.3\textwidth]{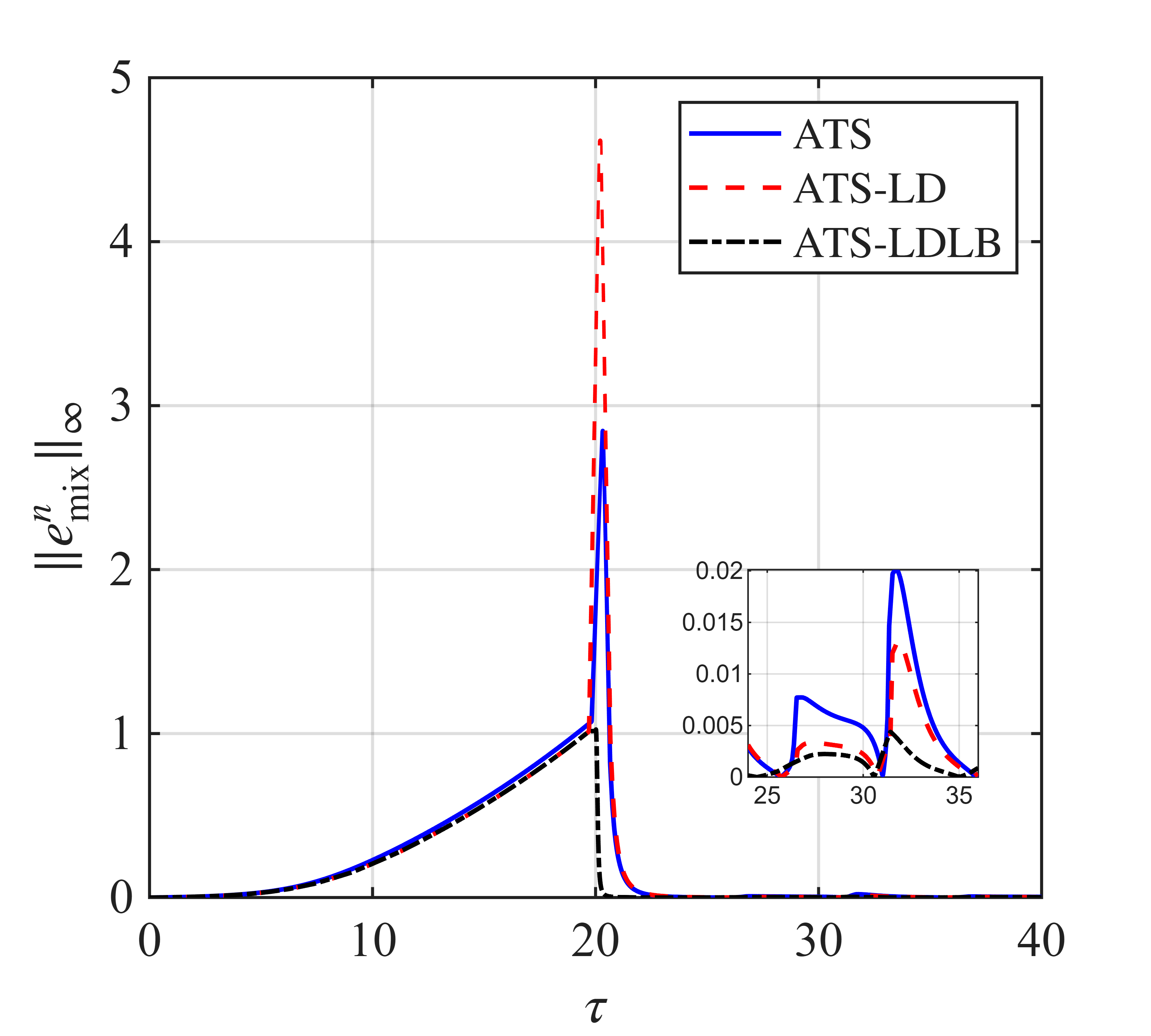}
		} 
		\caption{Numerical simulation of Example \ref{ex: ATS3comparison} using the first-order scheme \eqref{scheme: BDF1-vorticity}}
		\label{fig: ex1-Radau1}
	\end{figure}
	
	\begin{example}\label{ex: ATS3comparison}	
		Consider the INSE \eqref{cont: INSE-vorticity}-\eqref{cont: INSE-velocity} on the domain $\Omega = (0, 2\pi)^2$ with periodic boundary conditions and a viscosity coefficient $\nu = 0.5$. We introduce a  forcing term of separable form
		$	g(x,y,t) = g_1(x,y)\, g_2(t),$
		where the $T$-periodic weight function $g_2(t)$ is defined as $g_2(t) = 0$ for $t \in [\ell T,\, \ell T + T_1]$
		and $\forall\, \ell \in \mathbb{N}^+$.
		More specifically,  for the first periodic region and a positive integer $l_t$, we define
		\begin{equation}\label{eq:temporal-forcing}
			g_2(t) :=
			\begin{cases}
				0, & t \in [0, T_1], \\[4pt]
				\displaystyle \sin^2\!\brab{\tfrac{2\pi l_t (t - T_1)}{T-T_1}}, 
				& t \in [T_1, T].
			\end{cases}
		\end{equation}
		This forcing exhibits two distinct temporal regimes: a quiescent interval on $[0, T_1]$ and a highly oscillatory interval on $[T_1,T]$ when $l_t \gg 1$. 
		
		In the regime of large viscosity (i.e., small Reynolds number), and provided the quiescent interval is sufficiently long,  the long-time dynamics of the solution closely follow the slow-fast structure of the forcing, yielding an essentially time-periodic evolution. In particular, if $g_1$ is chosen as a Kolmogorov-type forcing
		$	g_1(x,y) = \sin(l_x x)$ for $l_x \in \mathbb{N}^+$,
		then the vorticity equation admits an exact solution $	\omega(x,y,t) = f(t)\, \sin(l_x x),$
		where $f(t)$ satisfies the ordinary differential equation,  $	\frac{df}{dt} + \nu l_x^2 f(t) = g_2(t).$
		Thus $f(t)$ can be explicitly expressed as 
		\begin{equation}
			f(t) =
			\begin{cases}
				\displaystyle f (0) \ze^{-\nu\ell_x^2 t}, & t \in [0, T_1], \\
				\displaystyle f (T_1) \ze^{-\nu\ell_x^2 (t-T_1) } + \ze^{-\nu\ell_x^2 (t-T_1) } \mathcal{J}_{2}\braB{\nu\ell_x^2, \tfrac{2\pi\ell_t}{T-T_1}, t-T_1}, & t \in (T_1, T],
			\end{cases}
		\end{equation}
		where $\mathcal{J}_2 (a, \omega, \Delta t) := \int_0^{\Delta t} \ze^{as} \sin^2 (\omega s) \zd s$.
		To ensure a time-periodic evolution such that $f(0) = f(T)$, the initial value is prescribed as
		\[
		f (0) = {\mathcal{J}_2 (\nu\ell_x^2, \tfrac{2\pi\ell_t}{T-T_1}, T - T_1) }/\brab{1 - \ze^{- \nu\ell_x^2 T}}.
		\]	
		For sufficiently large $\nu$, this exact solution is globally attracting, providing an excellent benchmark for evaluating the time-adaptivity of numerical schemes.  Always, the spatial operators are discretized by the Fourier pseudo-spectral method on a $128 \times 128$ uniform grid. To develop a practically effective approach of adaptive time-stepping for the IERK method \eqref{scheme: general IERK-vorticity}-\eqref{scheme: general IERK-velocity}, we set $l_t = 1$, $T_1 = 20$, and $T = 40$ to clearly distinguish the error behaviors and step-size response inside the oscillatory regime.
	\end{example}

	To resolve the non-equilibrium dynamical behaviors, including the high-frequency quasi-periodic behaviors, of the INSE \eqref{cont: INSE-vorticity}-\eqref{cont: INSE-velocity} subject to an external force $g$, we first introduce the ATS-LD strategy (Algorithm \ref{Adaptive-Time-Strategy-Delay} in Section \ref{appendix: ATS-LDLB strategy} of the supplementary material), which extends the traditional adaptive time-stepping approach (denoted as ATS). The traditional ATS determines the adaptive step size by
	\begin{align}\label{def: adaptive step size}
		\tau_{\text{ada}}:=\min\Big\{\max\Big\{\tau_{\min},\dfrac{\tau_{\max}}{\sqrt{1+\beta\|\partial_{\tau}\omega^n\|^2}}\Big\},r^{*} \tau_{n}\Big\},
	\end{align}
	where  $\beta$ is a predetermined sensitivity parameter controlling the adaptivity level and $r^{*}$ is the maximum step ratio predetermined by the user to control the potential growth of step-sizes.
	The  ATS strategy has been confirmed to be practically effective for resolving low-frequency behaviors 
	approaching an equilibrium state, cf. \cite{ChenHuangLiaoYi:2026cicp,LiaoTangWangZhou:2025R-IERK, LiaoZhang:2021}.

	We employ the implicit-explicit Euler method \eqref{scheme: BDF1-vorticity} to evaluate the performance of the ATS-LD strategy for three different maximum steps $\tau_{\max} = 1$, $\tau_{\max} = 0.5$ and $\tau_{\max} = 0.1$. The involved parameters for adaptive time-stepping strategies are set as follows: $\tau_{\min}=10^{-4},$ $\beta=1000,$
	$d_{\max} = 5,$ $\gamma_{\mathrm{tol}} = 10^{-3}$ and $r^{*} = 4.$
	To evaluate the numerical precision, we define the mixed errors $e^{n}_{\mathrm{mix}}$ on the spatial grid points:
	\[
	e^{n}_{\mathrm{mix}}:= 
	\begin{cases}
		\omega^n_{h} - \omega(x_h, y_h, t_n), & \text{if } |\omega(x_h, y_h, t_n)| \leq 10^{-8}, \\[3pt]
		\tfrac{\omega^n_{h} - \omega(x_h, y_h, t_n)}{\omega(x_h, y_h, t_n)}, & \text{otherwise}.
	\end{cases}
	\]
	In general, it represents the relative error of numerical solution; while we use the absolute error to measure the numerical precision in the regions where the magnitude of vorticity is very small.
	
	Figure~\ref{fig: ex1-Radau1} depicts the results for $\tau_{\max} = 1$ (top) and $\tau_{\max} = 0.5$ (bottom), in which ATS-LDLB represents an improved version of the ATS-LD algorithm and will be described later. We observe that the discrete enstrophies under different adaptive time-stepping strategies  match the reference curve perfectly, see Figs.~\ref{fig: ex1-Radau1} (a) and (d).

	Inside the oscillatory interval $[T_1, T]$, as shown in Figs.~\ref{fig: ex1-Radau1} (b) and (e), the adaptive steps of the traditional ATS method can reach 0.5 for the predetermined $\tau_{\max} = 0.5$, but interestingly, they always stay below 0.4 for the predetermined $\tau_{\max} = 1$. This behavior is inconsistent with the expected performance of a robust adaptive stepping strategy, because smaller step sizes (and thus smaller solution errors) would always expected when the predetermined value $\tau_{\max}$ is reduced. Indeed, Figs.~\ref{fig: ex1-Radau1} (c) and (f) show that ATS yields the maximum error about 0.06 under $\tau_{\max} = 0.5$, which is larger than the maximum error 0.04 observed under the larger setting $\tau_{\max} = 1$.  	
	In contrast, ATS-LD maintains consistently smaller step sizes inside the oscillatory interval compared to ATS. One can observe that the associated adaptive steps and the solution errors decrease monotonically as the predetermined value $\tau_{\max}$ reduces. Consequently, the new ATS-LD strategy is superior to  ATS since the former generates more accurate solutions inside the oscillatory interval $[T_1, T]$.

	However, ATS-LD performs worse than ATS under the setting $\tau_{\max} = 0.5$ near the on-off point of forcing at $T_1 = 20$, as seen in Figs.~\ref{fig: ex1-Radau1} (c) and (f). This degradation is likely due to the delay mechanism allowing the adaptive steps to increase to the maximum one $\tau_{\max}$ during the preceding decay phase ($t < 20$), resulting a numerical response lag at the onset of the oscillatory regime ($t=T_1$), which leads to large local errors.
	To remedy this issue, we propose an improved ATS-LD algorithm (see Algorithm \ref{Adaptive-Time-Strategy-Delay-Recompute} in Section \ref{appendix: ATS-LDLB strategy} of the supplementary material), called ATS-LDLB, by incorporating a \textbf{local backtrack mechanism} triggered when the vorticity gradient exceeds a variation threshold $\beta_{\text{thr}}$. One can observe from Figure~\ref{fig: ex1-Radau1} that ATS-LDLB (taking the variation threshold $\beta_{\text{thr}}=10$) effectively prevents the potential lags of step-size observed in ATS-LD especially near the on-off point of source (cf. the point $t=T_1$ of the exterior force \eqref{eq:temporal-forcing}), balancing the computational efficiency during the steady decay phase with the numerical accuracy near the transition point. Moreover, ATS-LDLB exhibits smoother variations of  steps inside the oscillatory interval.

	\begin{figure}[htb!]
		\centering
		\subfigure[$\tau_{n}$]{
			\includegraphics[width=0.35\textwidth]{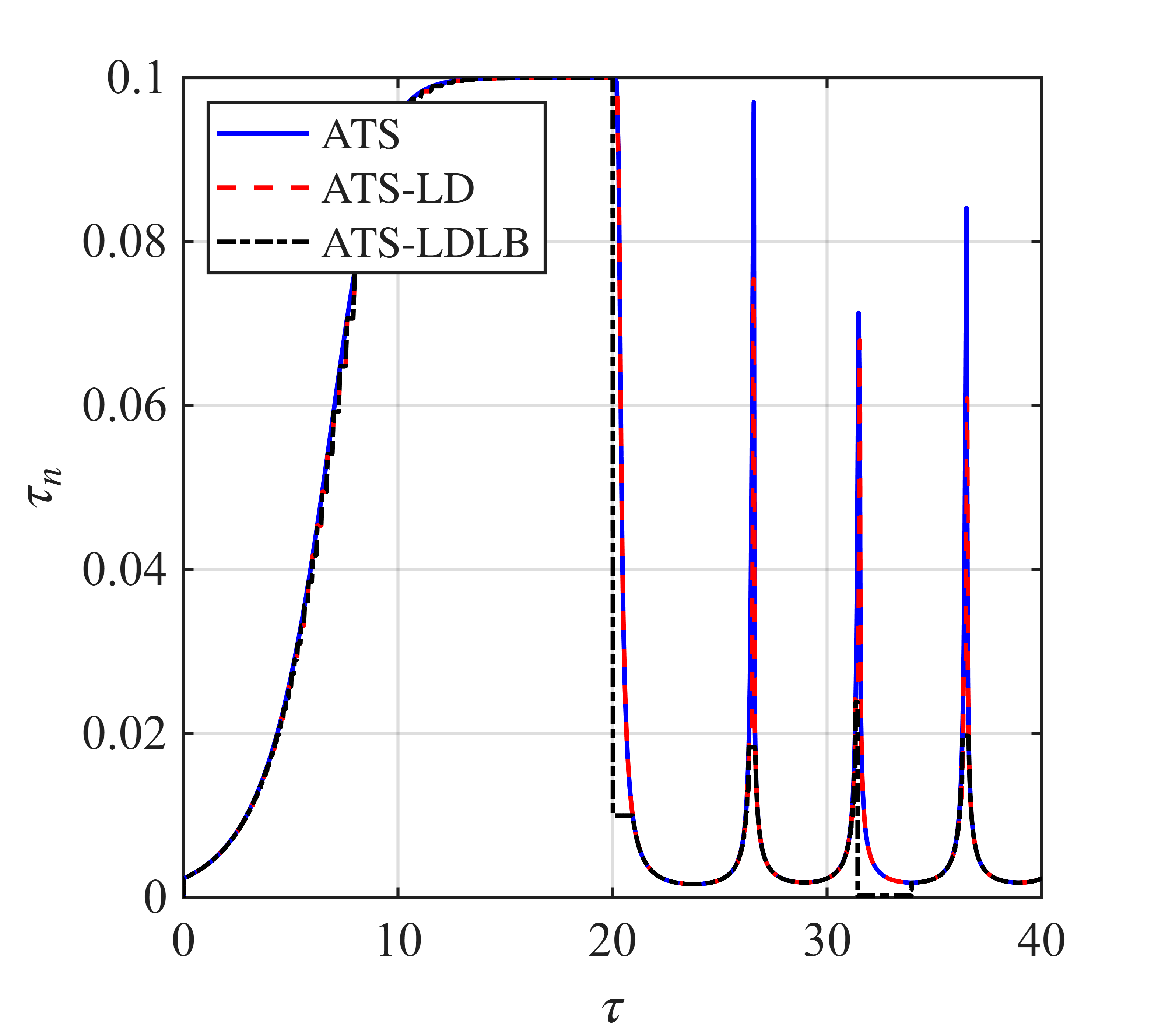}
		}
		\subfigure[$\|e^{n}_{\text{mix}}\|_{\infty}$]{
			\includegraphics[width=0.35\textwidth]{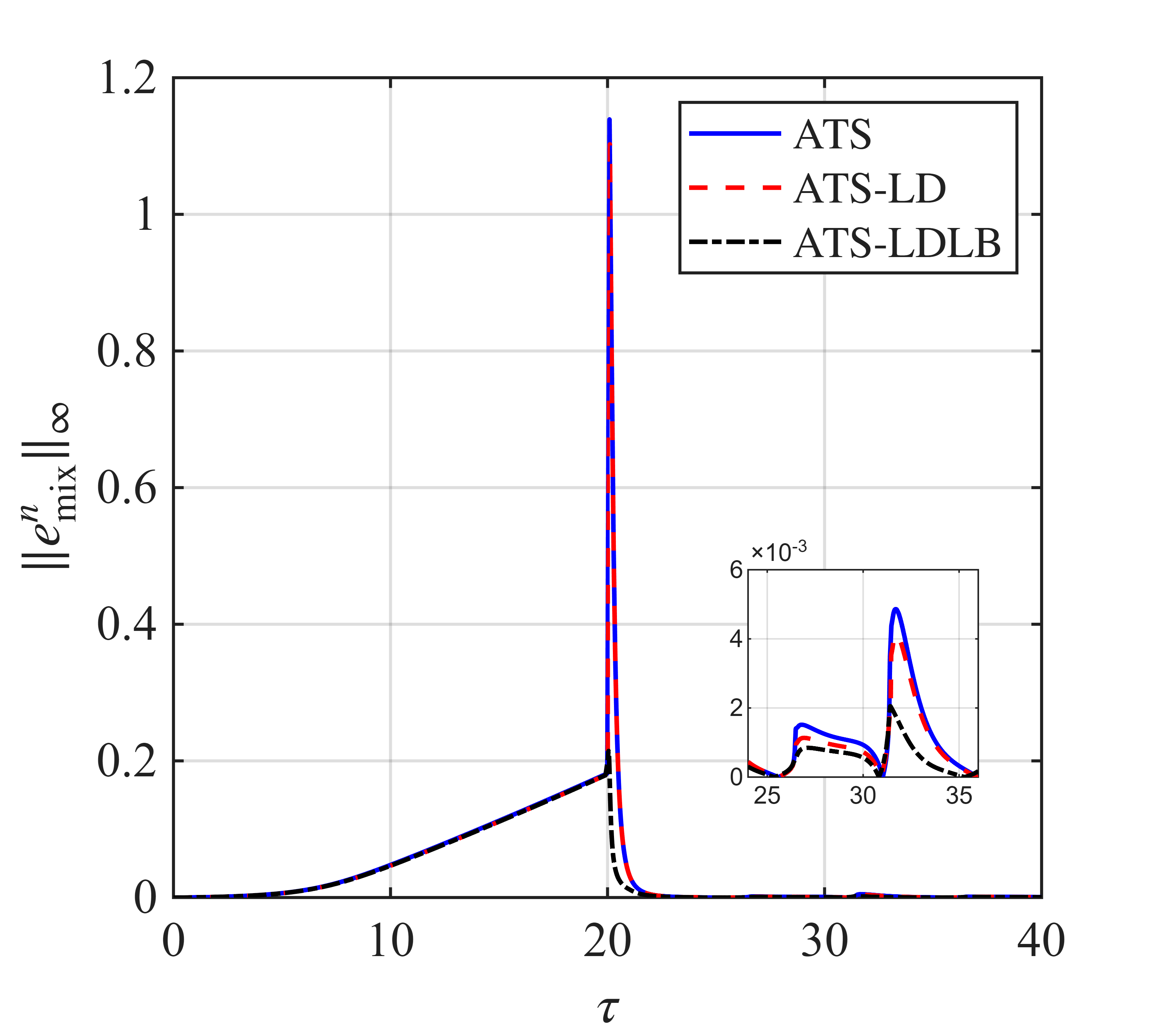}
		}
		\caption{Simulation of Example \ref{ex: ATS3comparison} using the first-order scheme \eqref{scheme: BDF1-vorticity} with $\tau_{\max}=0.1$.}
		\label{fig: ex1-Radau1-tau=0.1}
	\end{figure}

	Further numerical results for the case $\tau_{\max} = 0.1$ are presented in Figure~\ref{fig: ex1-Radau1-tau=0.1}. As expected, inside the oscillatory interval, ATS-LDLB again yields the smallest relative error, followed by ATS-LD, while ATS produces the largest error. Near the on-off point  ($t=T_1$) of forcing, the relative error of ATS-LD remains comparable to that of ATS, whereas ATS-LDLB demonstrates significantly superior performance. Correspondingly, ATS-LDLB rapidly reduces its step size without any delay effect observed in the results of ATS-LD near the critical transition point $t=T_1$.
	
	In summary, ATS-LDLB consistently delivers the best combination of accuracy and stability across all tested  values of the maximum time-step $\tau_{\max}$. It may therefore be a good candidate for 
	the non-equilibrium dynamical behaviors of \eqref{cont: INSE-vorticity}-\eqref{cont: INSE-velocity}. 
	
	\subsection{Adaptive time-stepping tests for IERK schemes} 
	
	Next, we evaluate the accuracy and stability of the parameterized  second-, third-, and fourth-order IERK schemes in Sections \ref{sec: IERK methods} coupled with the ATS-LDLB strategy for Example \ref{ex: ATS3comparison}. We will investigate the influence of method parameter choices on numerical performance, with a specific focus on the error behavior near the on-off point of external forcing.
	
	\begin{figure}[htb!]
		\centering
		\subfigure[$\tau_{n}$ with $\tau_{\max}=1$]{
			\includegraphics[width=0.3\textwidth]{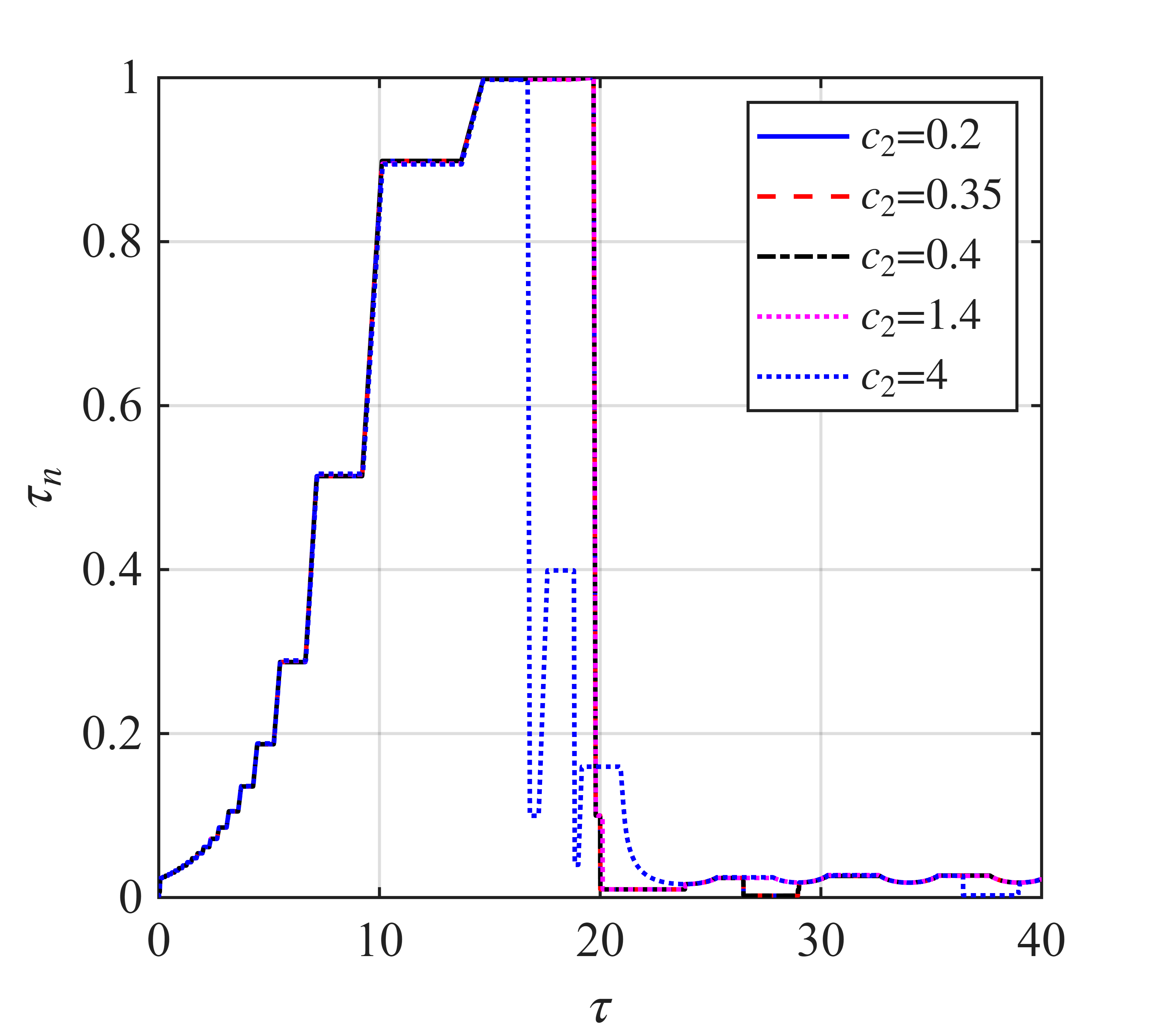}
		} 
		\subfigure[$\tau_{n}$ with $\tau_{\max}=0.5$]{
			\includegraphics[width=0.3\textwidth]{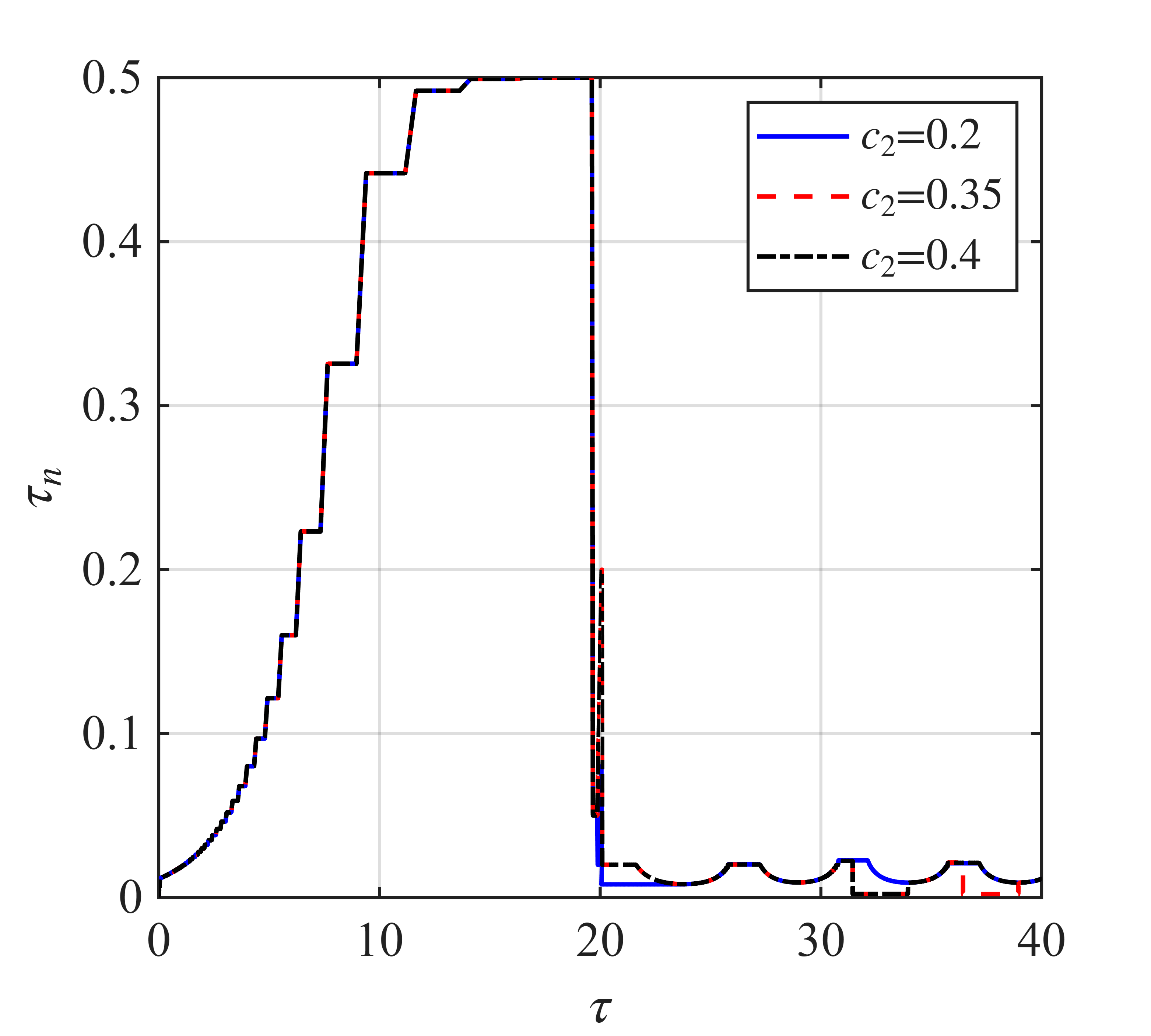}
		} 
		\subfigure[$\tau_{n}$ with $\tau_{\max}=0.1$]{
			\includegraphics[width=0.3\textwidth]{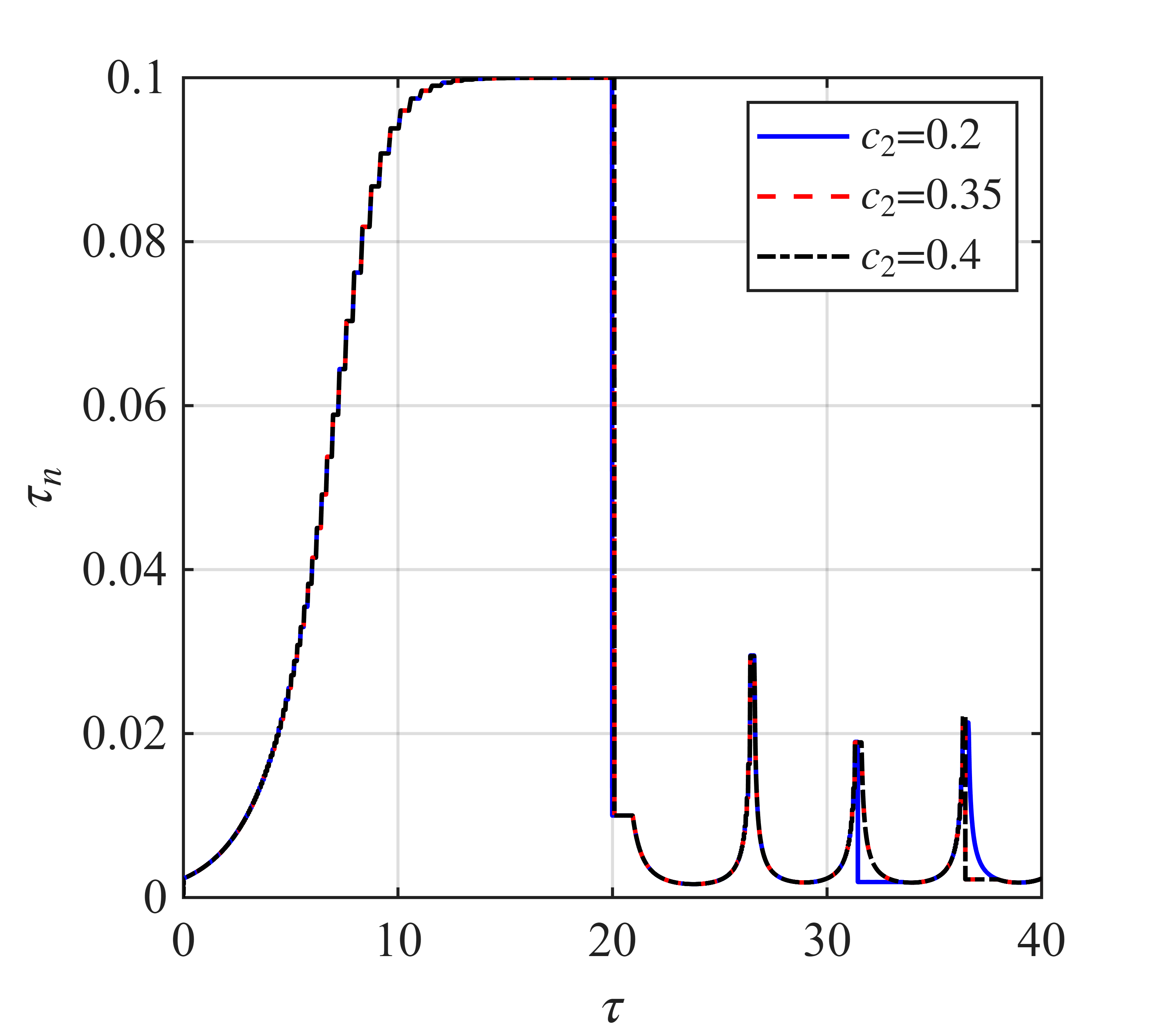}
		}
		\subfigure[$\|e^{n}_{\text{mix}}\|_{\infty}$ with $\tau_{\max}=1$]{
			\includegraphics[width=0.3\textwidth]{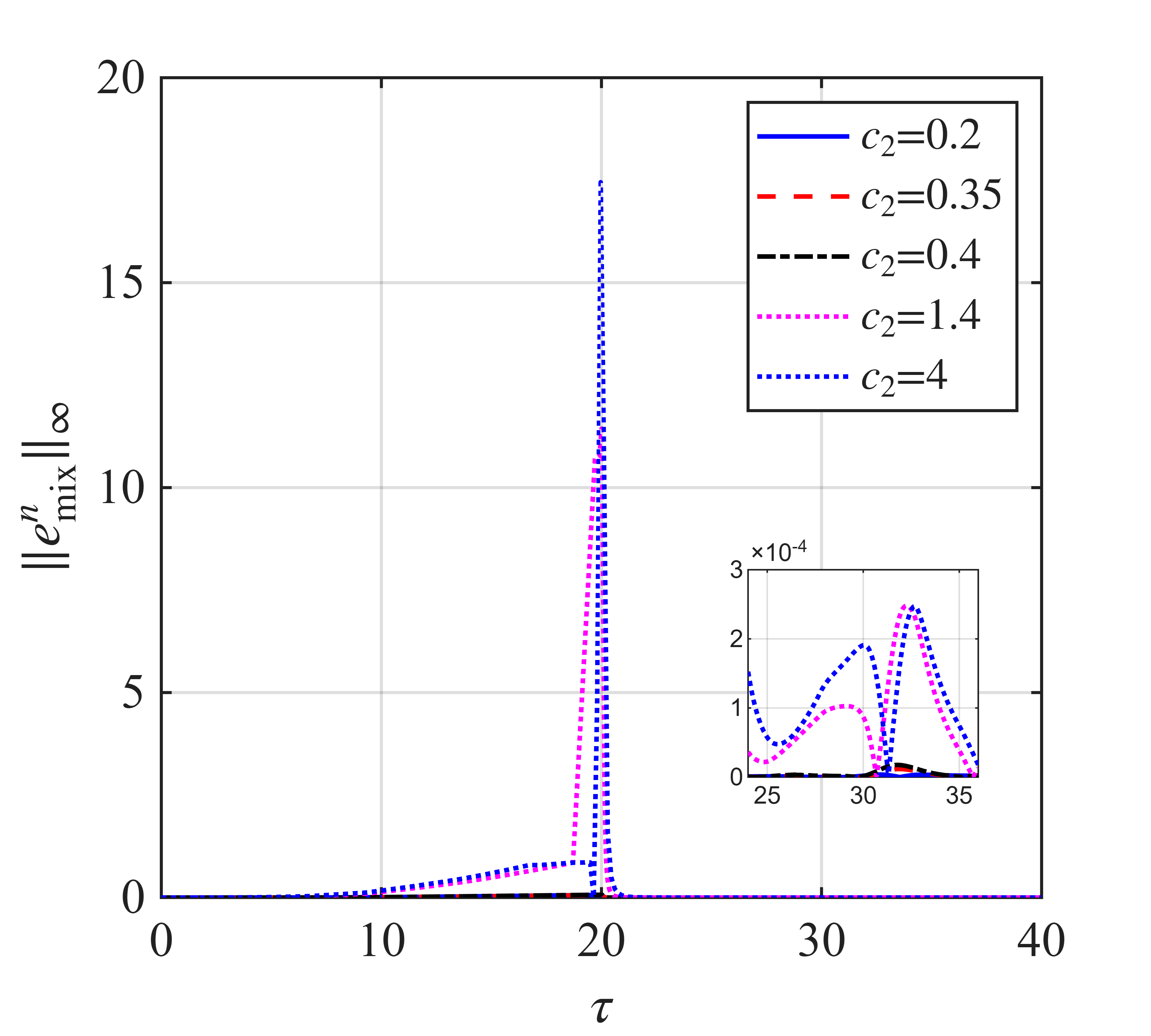}
		} 
		\subfigure[$\|e^{n}_{\text{mix}}\|_{\infty}$ with $\tau_{\max}=0.5$]{
			\includegraphics[width=0.3\textwidth]{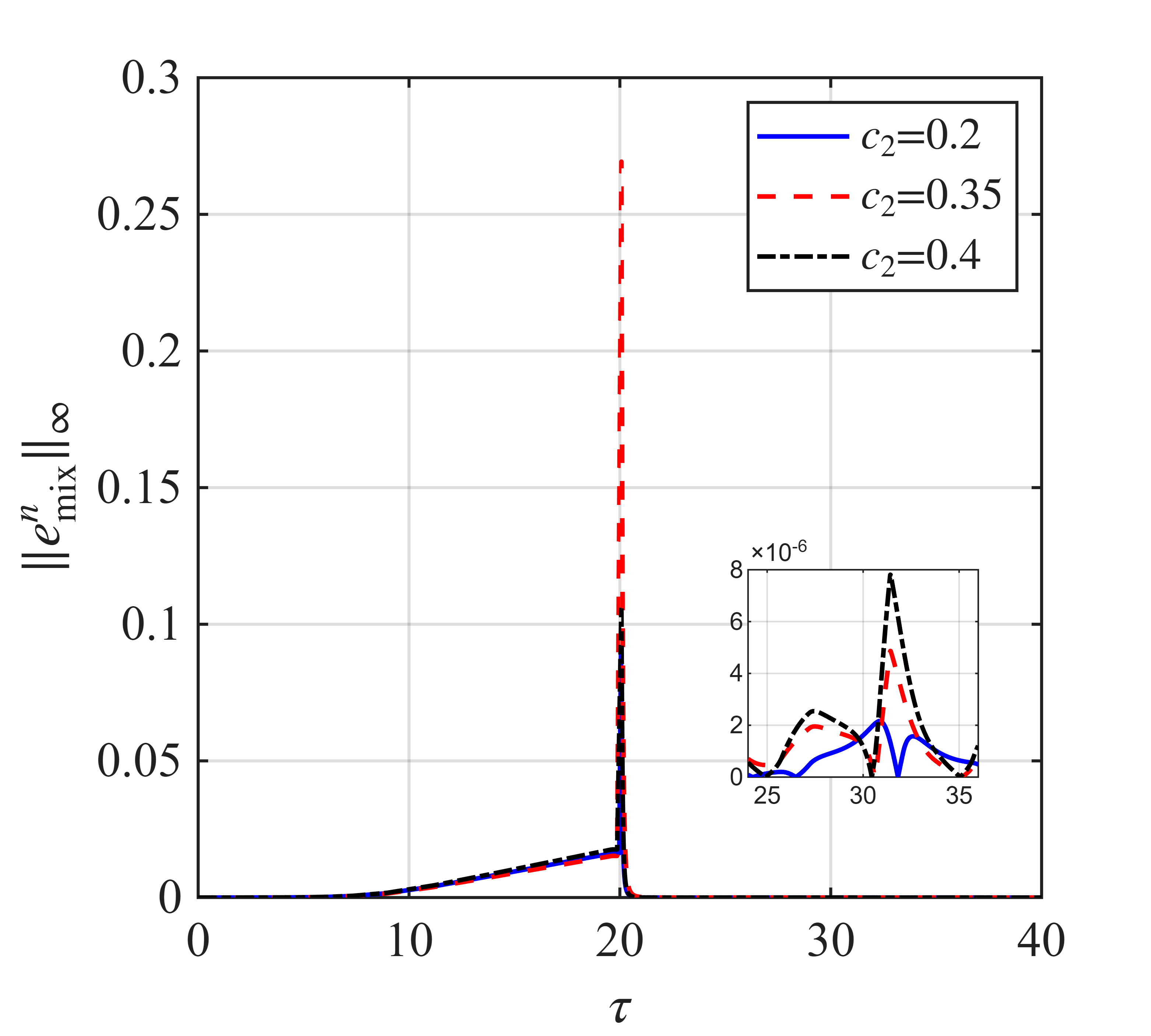}
		} 
		\subfigure[$\|e^{n}_{\text{mix}}\|_{\infty}$ with $\tau_{\max}=0.1$]{
			\includegraphics[width=0.3\textwidth]{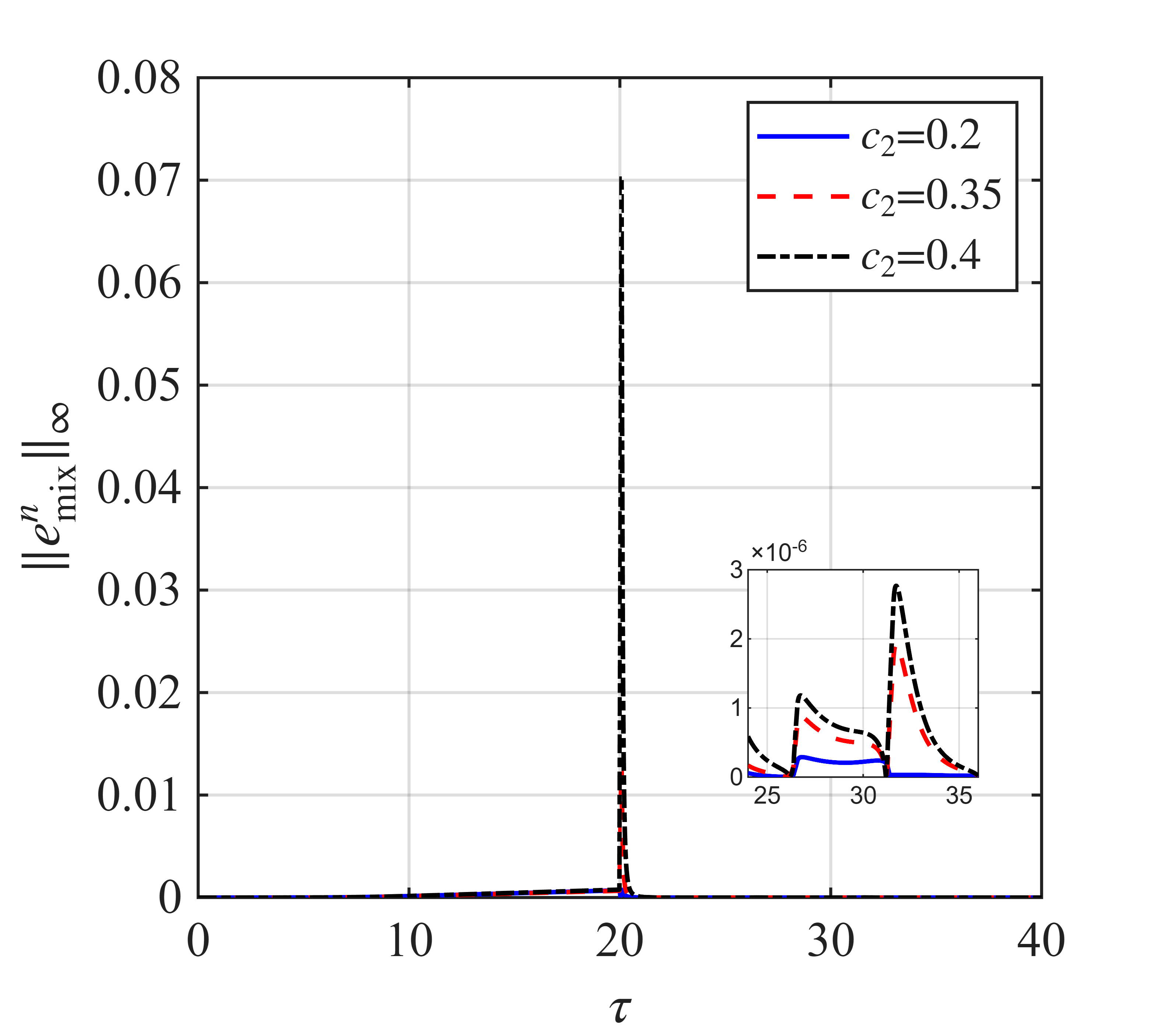}
		}
		\caption{Numerical simulations of Example \ref{ex: ATS3comparison} using IERK(2,3;$c_2$) schemes with ATS-LDLB strategy for $\tau_{\max}=1$ (left), $\tau_{\max}=0.5$ (middle), and $\tau_{\max}=0.1$ (right).}
		\label{fig: ex2-Radau2-para}
	\end{figure}
	
	\paragraph{Second-order IERK methods}
	We run the second-order IERK(2,3;$c_2$) methods for Example \ref{ex: ATS3comparison} with five different parameters $c_2 = 0.2, 0.35, 0.4, 1.4,$ and $4$, see Fig.~\ref{fig: ex2-Radau2-para}. The enstrophy curves for all tested parameters align well with the reference solution (plots omitted for brevity). For different $\tau_{\max}$ settings, the adaptive time-steps remain relatively smooth within the oscillatory interval $[T_1, T]$, as illustrated in Figs.~\ref{fig: ex2-Radau2-para} (a)--(c). Notably, the step size rapidly decreases at the forcing activation time $t = T_1$ to resolve the sudden transition of solution. However, the error curves in Figs.~\ref{fig: ex2-Radau2-para} (d)--(f) reveal that the mixed error $\|e^n_{\text{mix}}\|_{\infty}$ exhibits a significant peak at the transition point.
	
	Further inspections indicate that for the cases $c_2 = 1.4$ and $c_2 = 4$, the numerical errors near $t = T_1$ exceed $1.0$ regardless of the values of $\tau_{\max}$ (for clarity, the errors for $c_2 = 1.4$ and $c_2 = 4$ are only plotted for the case $\tau_{\max} = 1$). From the perspective of the scheme construction, the intermediate stages of the IERK(2,3;$c_2$) method require the forcing term at $t_n + c_2 \tau_n$. When $c_2 > 1$, the point falls outside the current time interval $[t_n, t_{n+1}]$, causing the scheme to ``pre-emptively'' perceive the activation of the forcing term before the transition actually occurs. This look-ahead sampling may disrupt the smoothness of numerical solution and introduces substantial numerical impulses at the interface. As a result, the adaptive strategy attempts to suppress these errors by drastically reducing the step size before reaching $t = T_1$, leading to undesirable  oscillations of adaptive steps, see Figure.~\ref{fig: ex2-Radau2-para} (a). In contrast, when $0 < c_2 \leq 1$, the errors are well-controlled within the oscillatory regime. At the moment of forcing activation, the error magnitudes vary slightly among the parameters; specifically, $c_2 = 0.35$ yields the smallest error for $\tau_{\max} = 1$, while $c_2 = 0.2$ performs best for the predetermined settings $\tau_{\max} = 0.5$ and $0.1$, as summarized in Table~\ref{table: Radau2-max-error}. 
	
	\begin{table}[htb!]
		\centering
		\begin{threeparttable}
			\centering 
			\renewcommand\arraystretch{1.3}
			\belowrulesep=0pt\aboverulesep=0pt
			\caption{Maximum errors $\max_{n} \|e^{n}_{\text{mix}}\|_{\infty}$ of IERK(2,3;$c_2$) for Example \ref{ex: ATS3comparison}.}
			\label{table: Radau2-max-error}
			\vspace{2mm}
			\begin{tabularx}{0.95\textwidth}{c@{\extracolsep{\fill}}ccccc}
				\toprule 
				& $c_{2}=0.2$ & $c_{2}=0.35$ & $c_{2}=0.4$ & $c_{2}=1.4$ & $c_{2}=4$ \\
				\midrule 
				$\tau_{\max}=1$ & 6.103e-2 & 5.759e-2 & 6.607e-2 & 11.44 & 17.45 \\[2pt]
				\midrule 
				$\tau_{\max}=0.5$ & 8.989e-2 & 2.693e-1 & 1.068e-1 & 3.412e-1 & 1.135  \\[2pt]
				\midrule 
				$\tau_{\max}=0.1$ & 7.632e-4 & 1.322e-2 & 7.040e-2 & 9.557e-1 & 6.262 \\
				\bottomrule
			\end{tabularx}
		\end{threeparttable}
	\end{table}
	
	\paragraph{Third- and Fourth-order IERK methods}
	
	We next evaluate the performance of the third-order IERK(3,5;$a_{55}$) schemes with $a_{55} \in \{0.7, 1.2, 1.8, 2.1\}$ and the fourth-order IERK(4,7;$\hat{a}_{43}$) schemes with $\hat{a}_{43} \in \{-5, -2, -0.8, 2, 5\}$ under the ATS-LDLB strategy. The resulting mixed error evolutions are presented in Figure~\ref{fig: ex2-Radau3&4-para}. For the sake of brevity, the plots of adaptive steps are omitted, as the influence of the method parameters is sufficiently characterized by the error behaviors.
	

	\begin{figure}[htb!]
		\centering
		\subfigure[$\tau_{\max}=1$]{
			\includegraphics[width=0.3\textwidth]{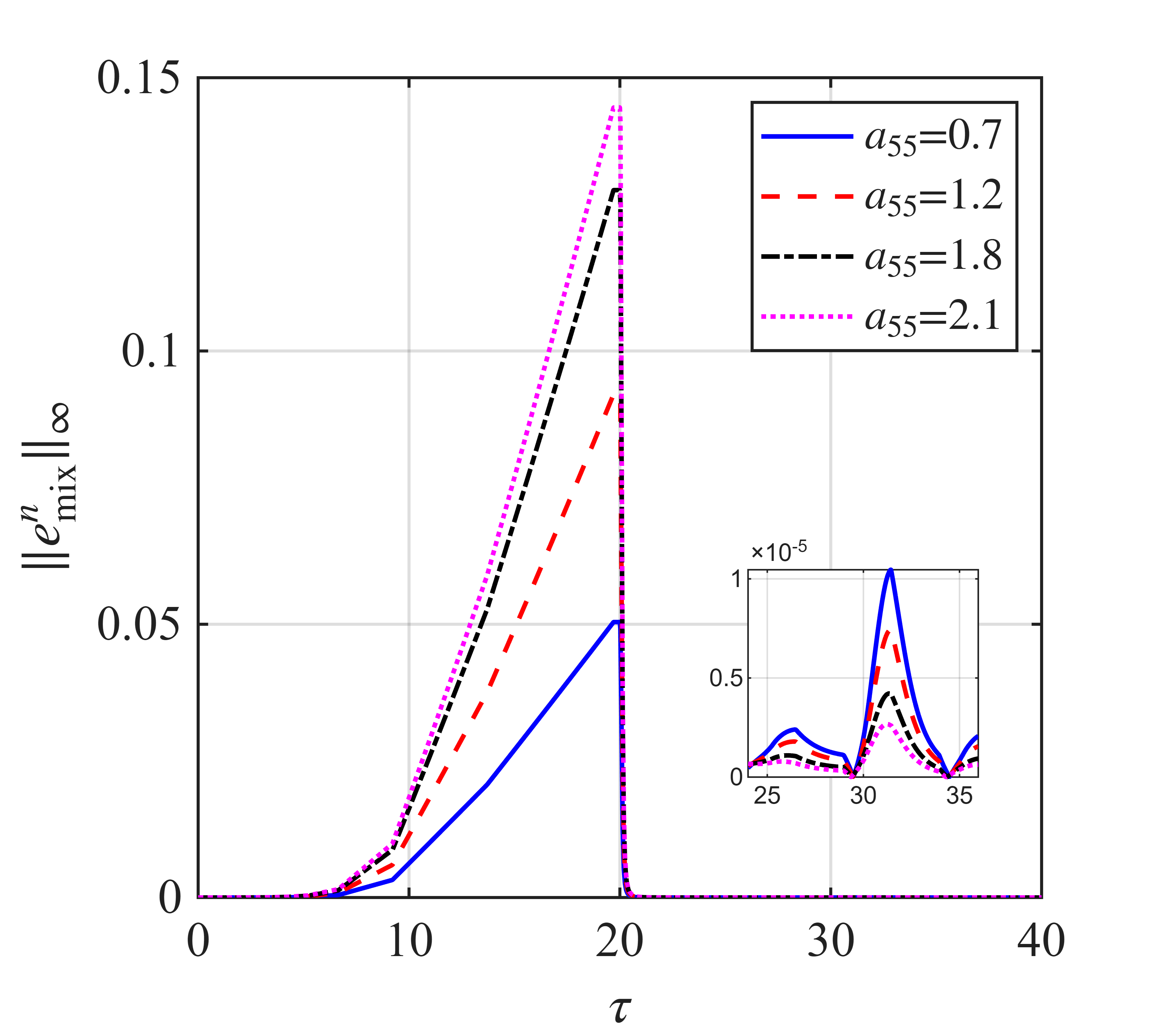}
		} 
		\subfigure[$\tau_{\max}=0.5$]{
			\includegraphics[width=0.3\textwidth]{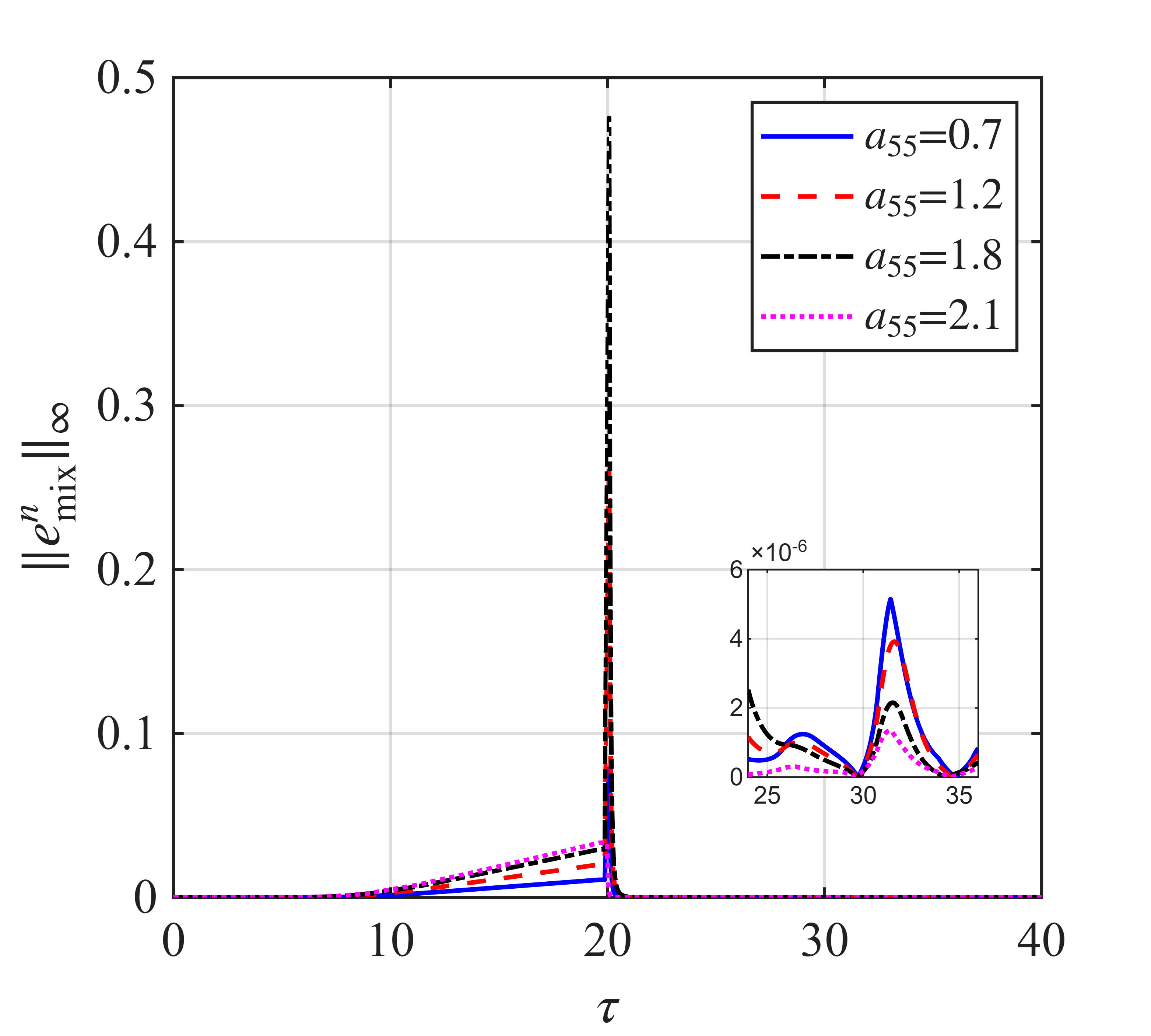}
		} 
		\subfigure[$\tau_{\max}=0.1$]{
			\includegraphics[width=0.3\textwidth]{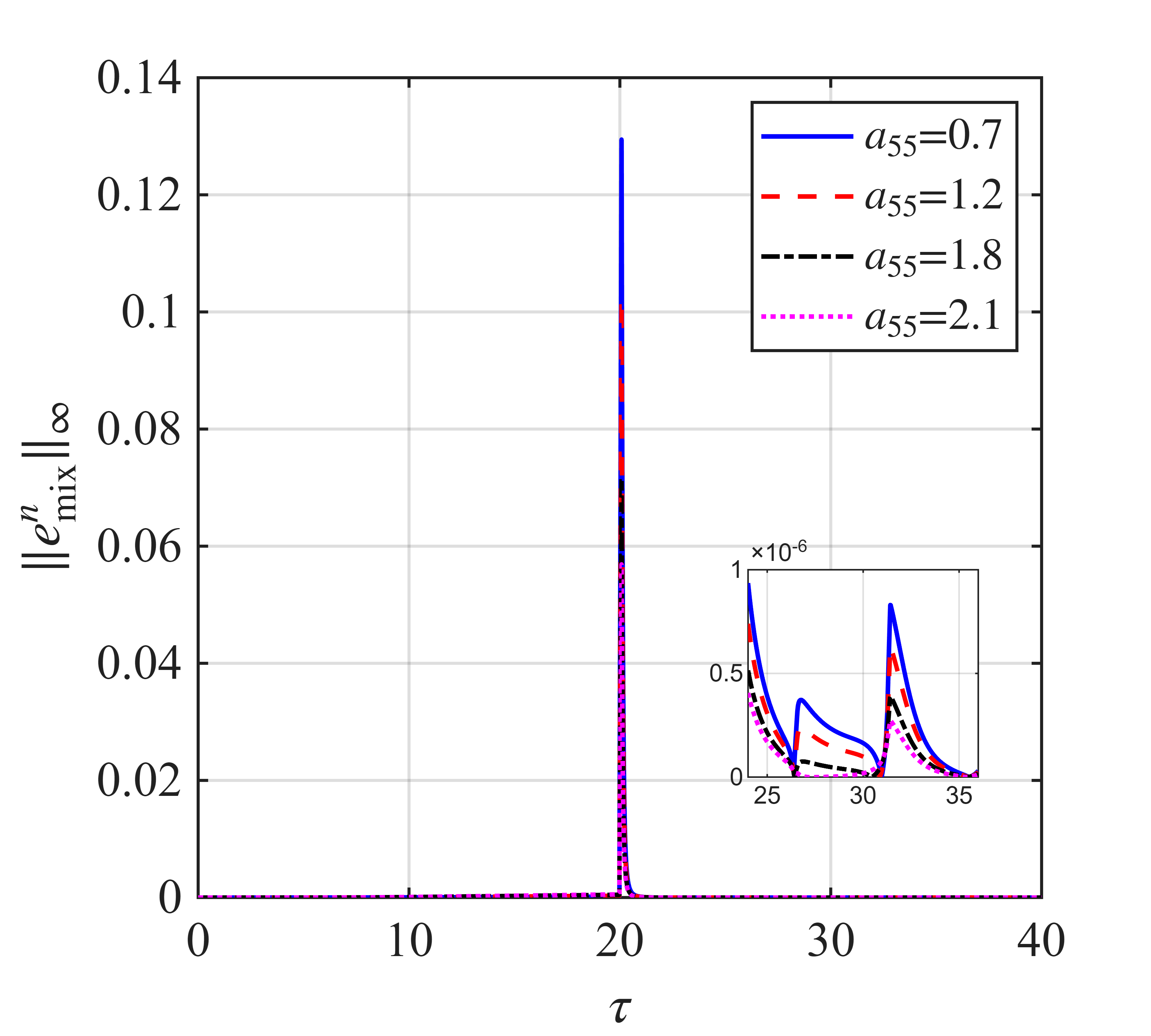}
		}
		\subfigure[ $\tau_{\max}=1$]{
			\includegraphics[width=0.3\textwidth]{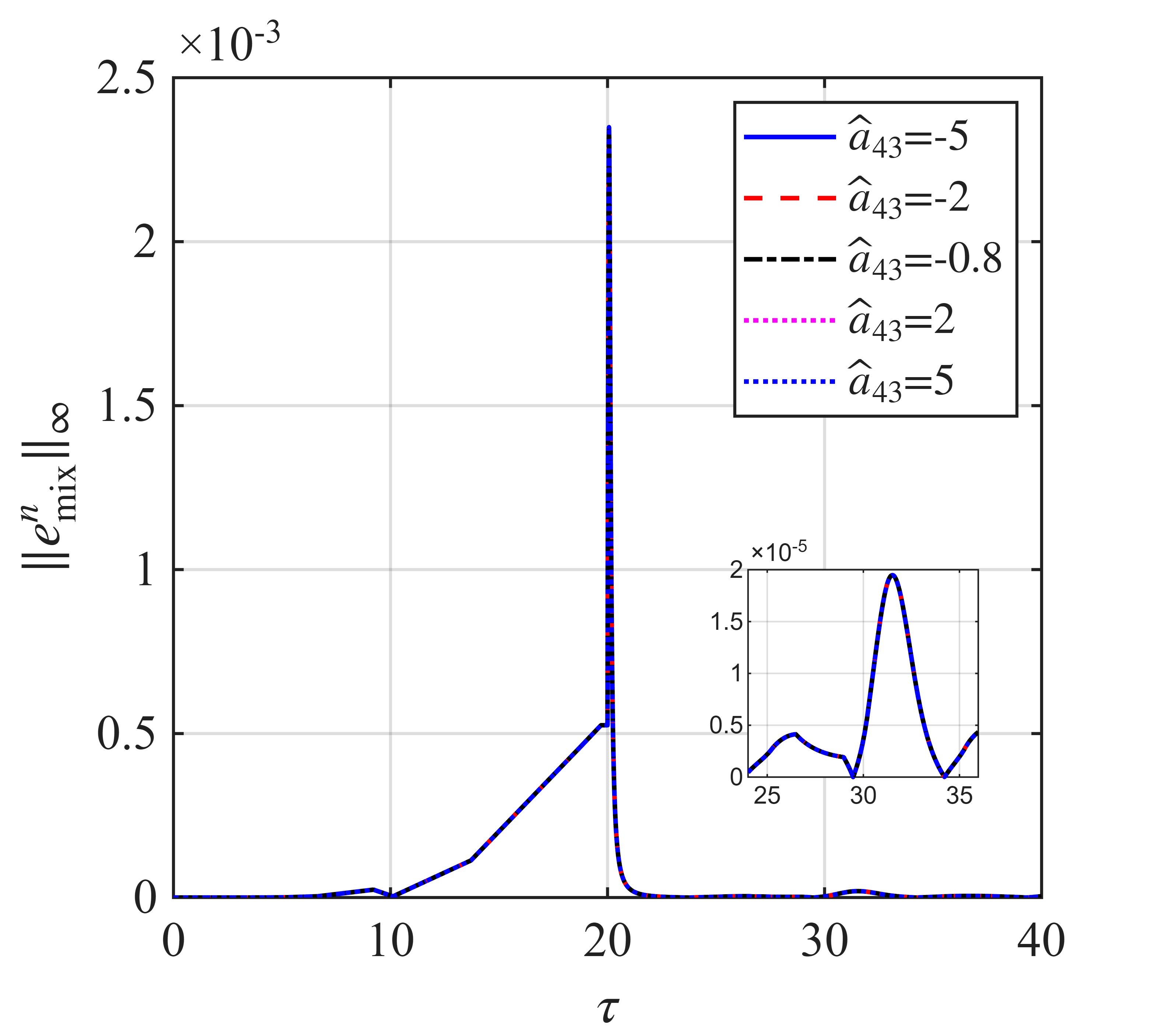}
		} 
		\subfigure[$\tau_{\max}=0.5$]{
			\includegraphics[width=0.3\textwidth]{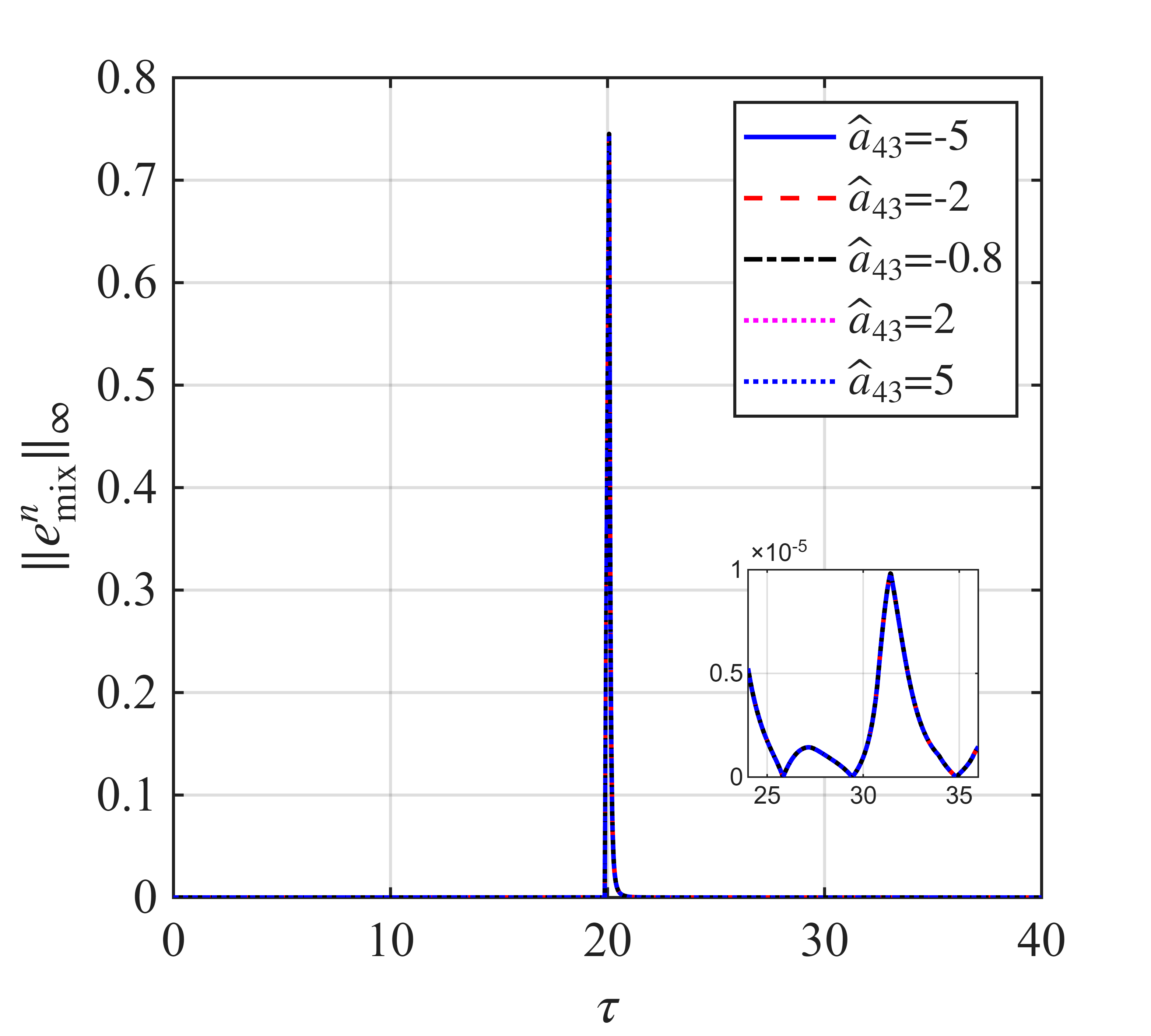}
		} 
		\subfigure[ $\tau_{\max}=0.1$]{
			\includegraphics[width=0.3\textwidth]{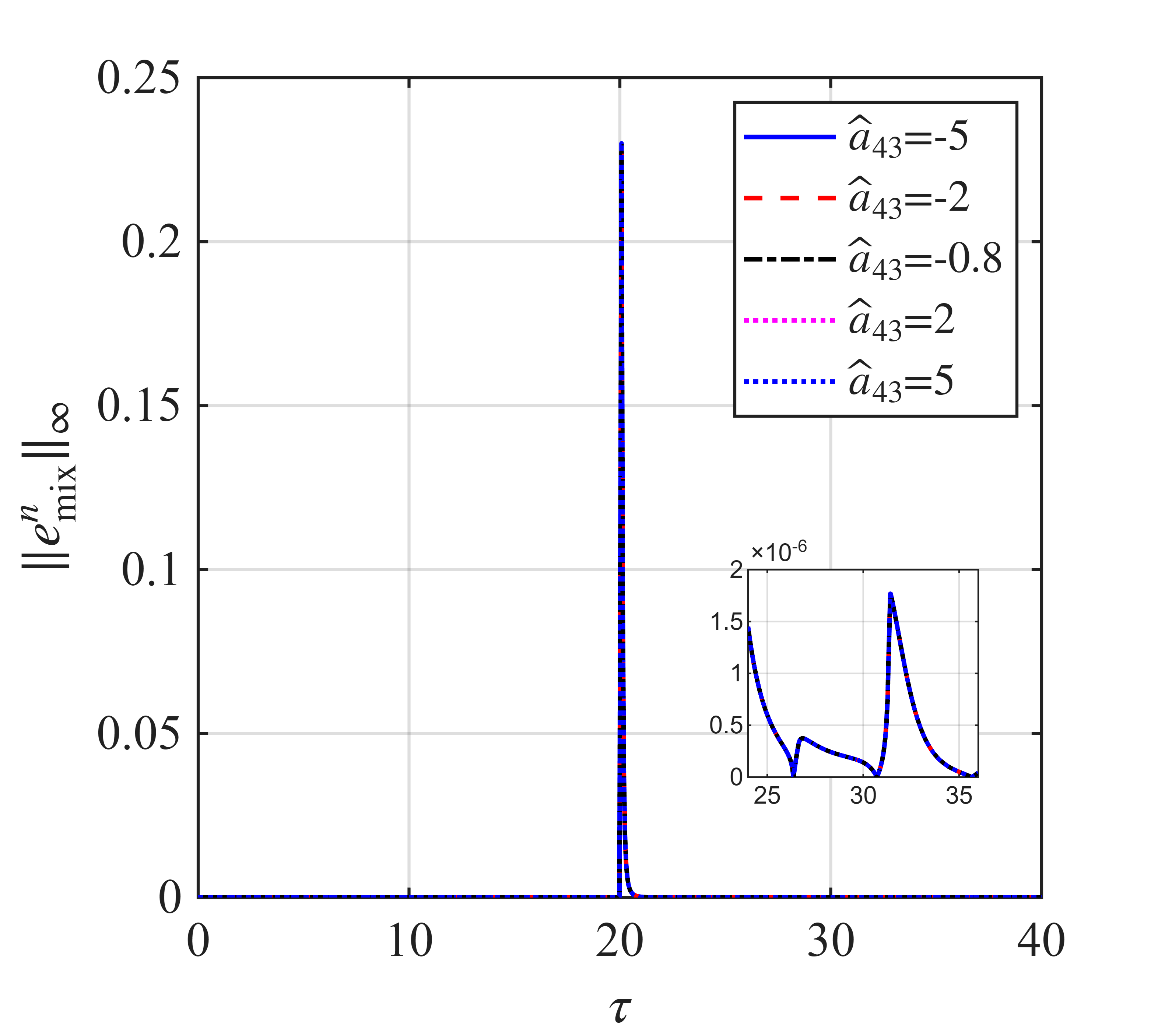}
		}
		\caption{The mixed errors $\|e^{n}_{\text{mix}}\|_{\infty}$ of IERK(3,5;$a_{55}$) (top) and IERK(4,7;$\hat{a}_{43}$) (bottom) schemes with the ATS-LDLB strategy for Example \ref{ex: ATS3comparison}.}
		\label{fig: ex2-Radau3&4-para}
	\end{figure}

	\begin{table}[htb!]
		\centering
		\begin{threeparttable}
			\centering 
			\renewcommand\arraystretch{1.3}
			\belowrulesep=0pt\aboverulesep=0pt
			\caption{Maximum errors $\max_{n} \|e^{n}_{\text{mix}}\|_{\infty}$ of IERK(3,5;$a_{55}$) for Example \ref{ex: ATS3comparison}.}
			\label{table: Radau3-max-error}
			\vspace{2mm}
			\begin{tabularx}{0.95\textwidth}{c@{\extracolsep{\fill}}cccc}
				\toprule 
				& $a_{55}=0.7$ & $a_{55}=1.2$ & $a_{55}=1.8$ & $a_{55}=2.1$ \\
				\midrule 
				$\tau_{\max}=1$ & 5.040e-2 & 9.204e-2 & 1.294e-1 & 1.446e-1 \\[2pt]
				\midrule 
				$\tau_{\max}=0.5$ & 7.871e-2 & 2.692e-1 & 4.755e-1 & 3.337e-2 \\[2pt]
				\midrule 
				$\tau_{\max}=0.1$ & 1.294e-1 & 1.024e-1 & 7.165e-2 & 5.689e-2 \\
				\bottomrule
			\end{tabularx}
		\end{threeparttable}
	\end{table}
	
	Inside the oscillatory interval $t\in(T_1, T]$, both the third- and fourth-order schemes consistently exhibit low and comparable errors for all tested $\tau_{\max}$. However, notable distinctions again emerge near the on-off point $T_1$. The IERK(4,7;$\hat{a}_{43}$) schemes have remarkable robustness, with the error magnitude remaining nearly insensitive to the variations of $\hat{a}_{43}$. In contrast, the IERK(3,5;$a_{55}$) schemes show a pronounced sensitivity to $a_{55}$, see Figure \ref{fig: ex2-Radau3&4-para}. Recall that the cases $a_{55}=0.7$ and $\hat{a}_{43}=2$ arrive at the minimum errors for different time-steps in the convergence tests with smooth solutions, cf. Figure \ref{fig: accuracy of other-IERK}. While these values remain highly effective in the smooth oscillatory regions, the non-smooth transition at $T_1$ introduces a different numerical dynamics. Specifically, for $\tau_{\max}=1$, the case $a_{55}=0.7$ still provides the highest precision. However, see Table~\ref{table: Radau3-max-error}, for smaller settings $\tau_{\max}=0.5$ and 0.1, the IERK(3,5;$a_{55}$) method with $a_{55}=2.1$ becomes the most accurate. This remains mysterious to us.
	
	More surprisingly, the maximum errors of both methods do not always decrease as the maximum step setting $\tau_{\max}$ decreases. For IERK(3,5;$a_{55}$), the peak error near $T_1$ for $\tau_{\max}=0.5$ is actually larger than that for $\tau_{\max}=1$, while further reducing the setting $\tau_{\max}=0.1$ only leads to a slight improvement over the case of $\tau_{\max}=1$. A similar behavior is observed for IERK(4,7;$\hat{a}_{43}$): the smallest error appear for the case $\tau_{\max}=1$, followed by $\tau_{\max}=0.1$, while the largest error occurs for $\tau_{\max}=0.5$. The underlying mechanisms for this phenomenon require further investigations.
	
	\subsection{Robustness of ATS-LDLB strategy}
	
	Here we compare the ATS with ATS-LDLB strategies using the IERK(4,7;$\hat{a}_{43}$) scheme with $\hat{a}_{43}=2$. By introducing two different groups of high-frequency oscillations, we will test whether the ATS-LDLB strategy can generate accurate solutions when the forcing changes rapidly.

	\begin{example}\label{ex: MultiFreqTest}
		We extend the configuration of Example \ref{ex: ATS3comparison} by constructing a temporal weight function $g_2(t)$ consisting of three pulses with different frequencies:
		\[
		g_{2} (t) = \begin{cases}
			\sin^{2}\bra{\tfrac{2\pi l_{t, j} (t-T_{2j-1}) }{T_{2j}-T_{2j-1}}}, & t \in (T_{2j-1}, T_{2j}], \quad j=1,2,3, \\
			0, & \text{otherwise.}
		\end{cases}
		\]
		The pulse intervals are defined by the points $T_0=0, T_1=20, T_2=40, T_3=70, T_4=80, T_5=90$, and $T=T_6=120$. The solution has the form $\omega (x, y, t) = f (t) \sin(l_x x)$, where the temporal evolution $f(t)$ satisfies $f'(t)+\nu l_x^2 f(t) = g_2(t)$. The piecewise analytical expression of $f(t)$ is given by
		\[
		f (t) = \begin{cases}
			\displaystyle f (T_{2j-2}) \ze^{-\nu l_x^2 (t-T_{2j-2})}, & t \in (T_{2j-2},T_{2j-1}], \\
			\displaystyle f (T_{2j-1}) \ze^{-\nu l_x^2 (t-T_{2j-1}) } + \ze^{-\nu l_x^2 (t-T_{2j-1}) } \mathcal{J}_{2,j}, & t \in (T_{2j-1},T_{2j}],
		\end{cases}
		\]
		for $j=1,2,3$, where $\mathcal{J}_{2,j}:=\mathcal{J}_{2}\braB{ \nu l_x^2, \tfrac{2\pi l_{t, j}}{T_{2j}-T_{2j-1}}, t-T_{2j-1}}$.
		To ensure time-periodicity such that $f(0) = f(T)$, the initial value is prescribed as 
		\[
		f (0) = \frac{\sum_{j=1}^{3} \ze^{-\nu l_x^2 (T_{6} - T_{2j-1}) } \mathcal{J}_2 (\nu l_x^2, \frac{2\pi l_{t, j}}{T_{2j}-T_{2j-1}}, T_{2j}-T_{2j-1}) }{1 - \ze^{-\nu l_x^2 T_{6}}}.
		\]
		With the spatial wavenumber $l_x=1$, the pulse frequencies $\vec{l_t}:=(l_{t,1},l_{t,2},l_{t,3})$ will be determined later.  This problem features multiple on-off points of exterior force and different scales of oscillations, providing a challenging test for the practical effectiveness of adaptive time-stepping algorithms.
	\end{example}
	
	\begin{figure}[htb!]
		\centering
		\subfigure[$\|\omega^{n}\|^{2}$ for Freq-A]{
			\includegraphics[width=0.3\textwidth]{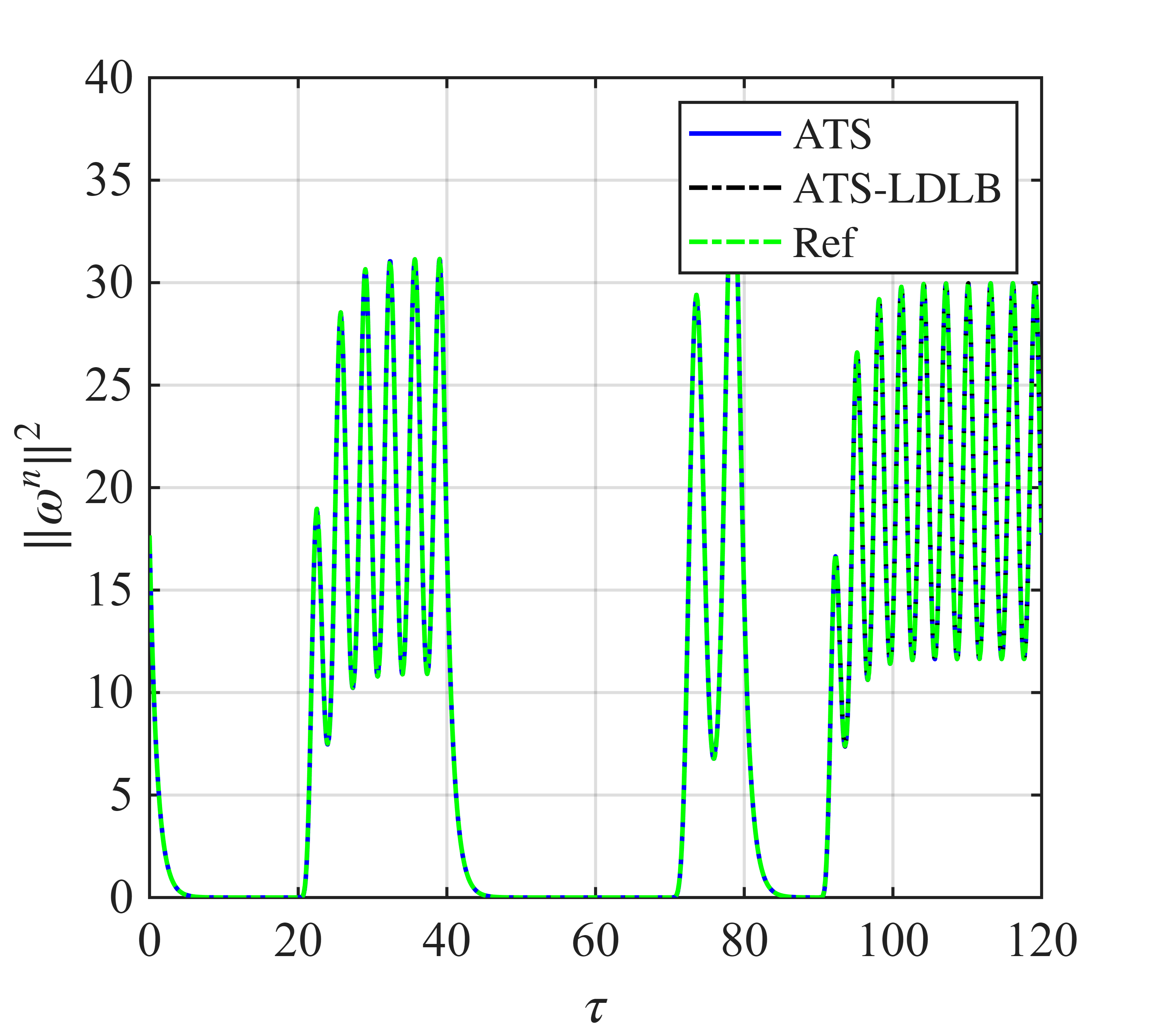}
		} 
		\subfigure[$\tau_{n}$ for Freq-A]{
			\includegraphics[width=0.3\textwidth]{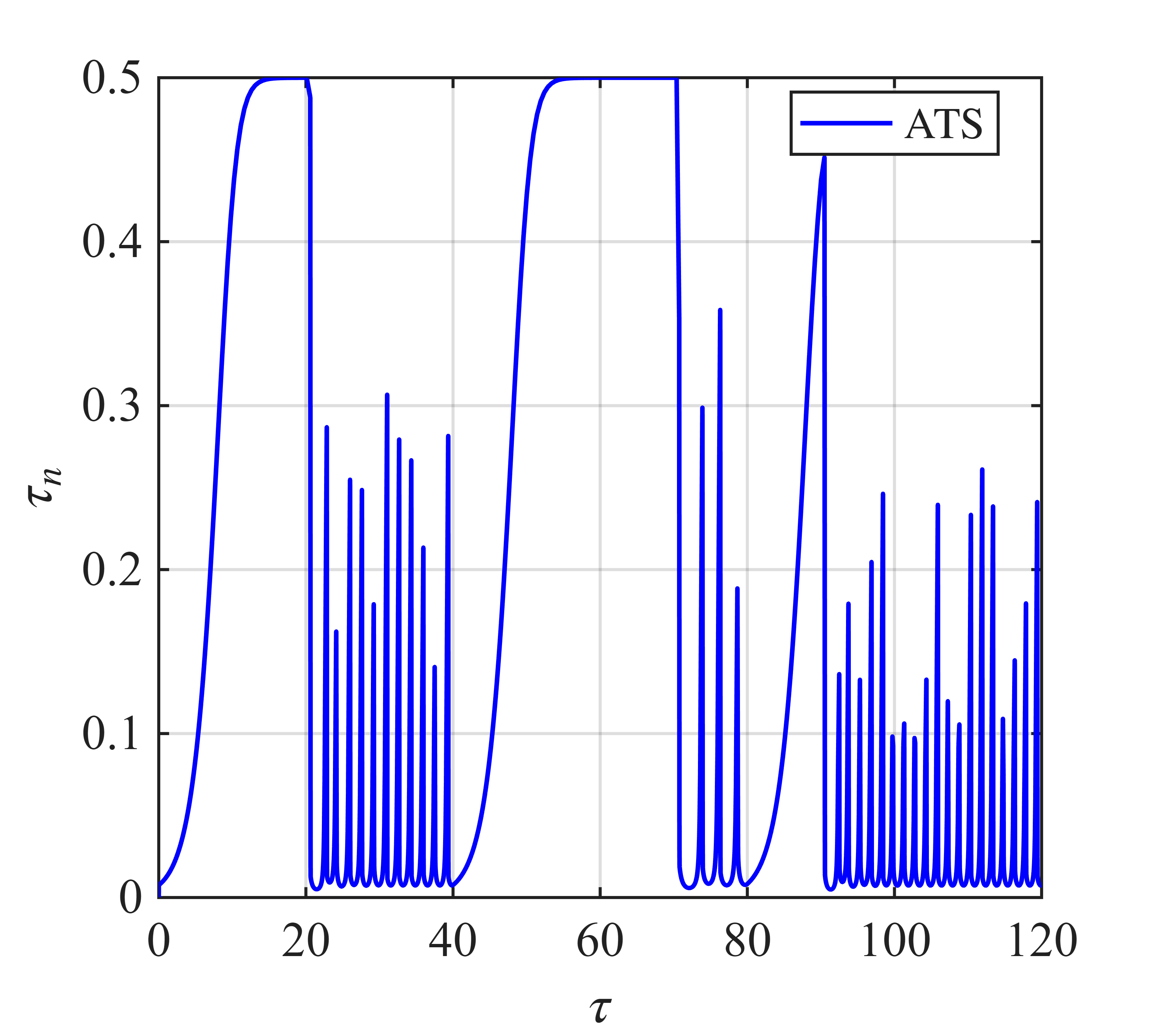}
		} 
		\subfigure[$\tau_{n}$ for Freq-A]{
			\includegraphics[width=0.3\textwidth]{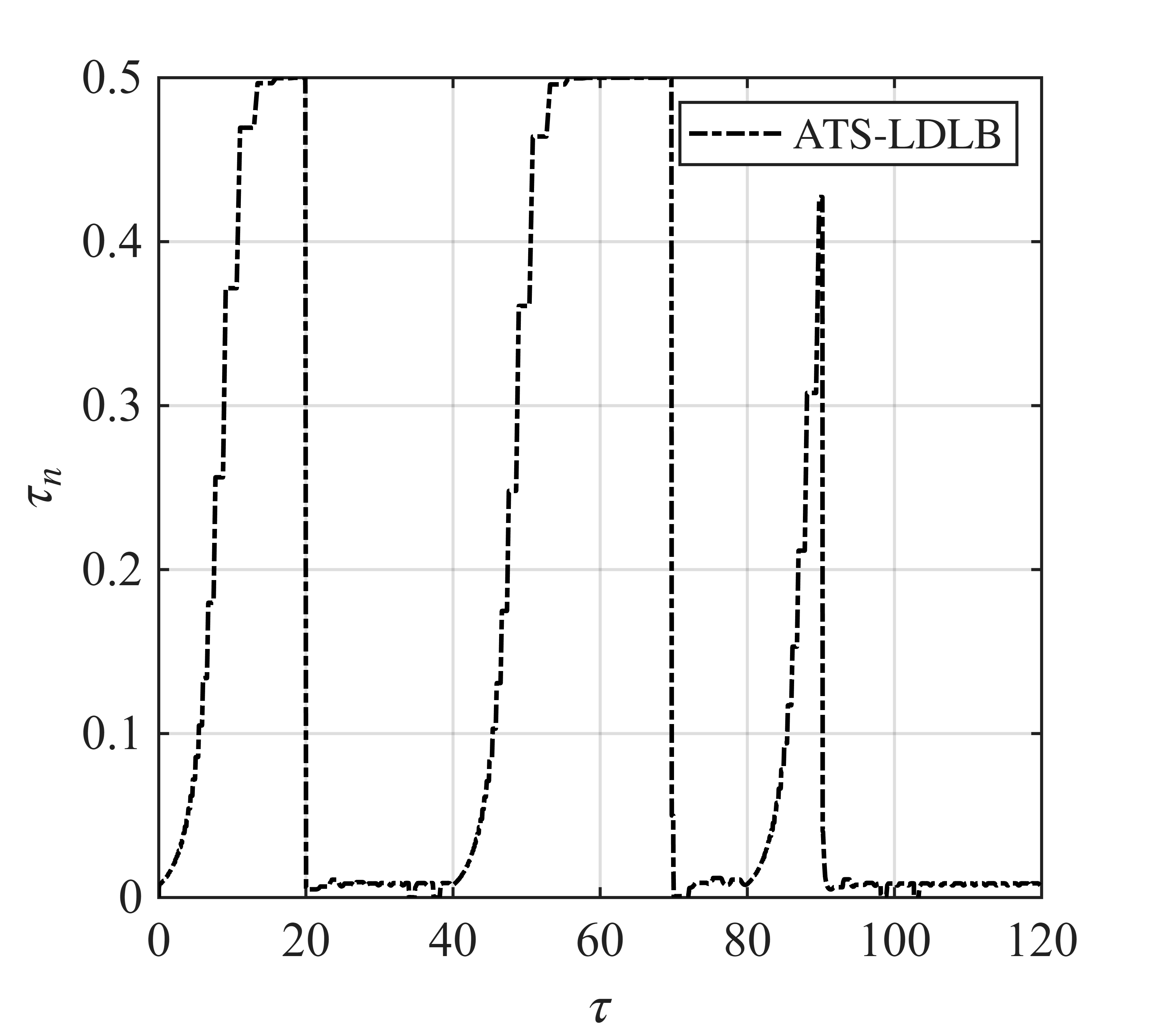}
		}
		\subfigure[$\|\omega^{n}\|^{2}$ for Freq-B]{
			\includegraphics[width=0.3\textwidth]{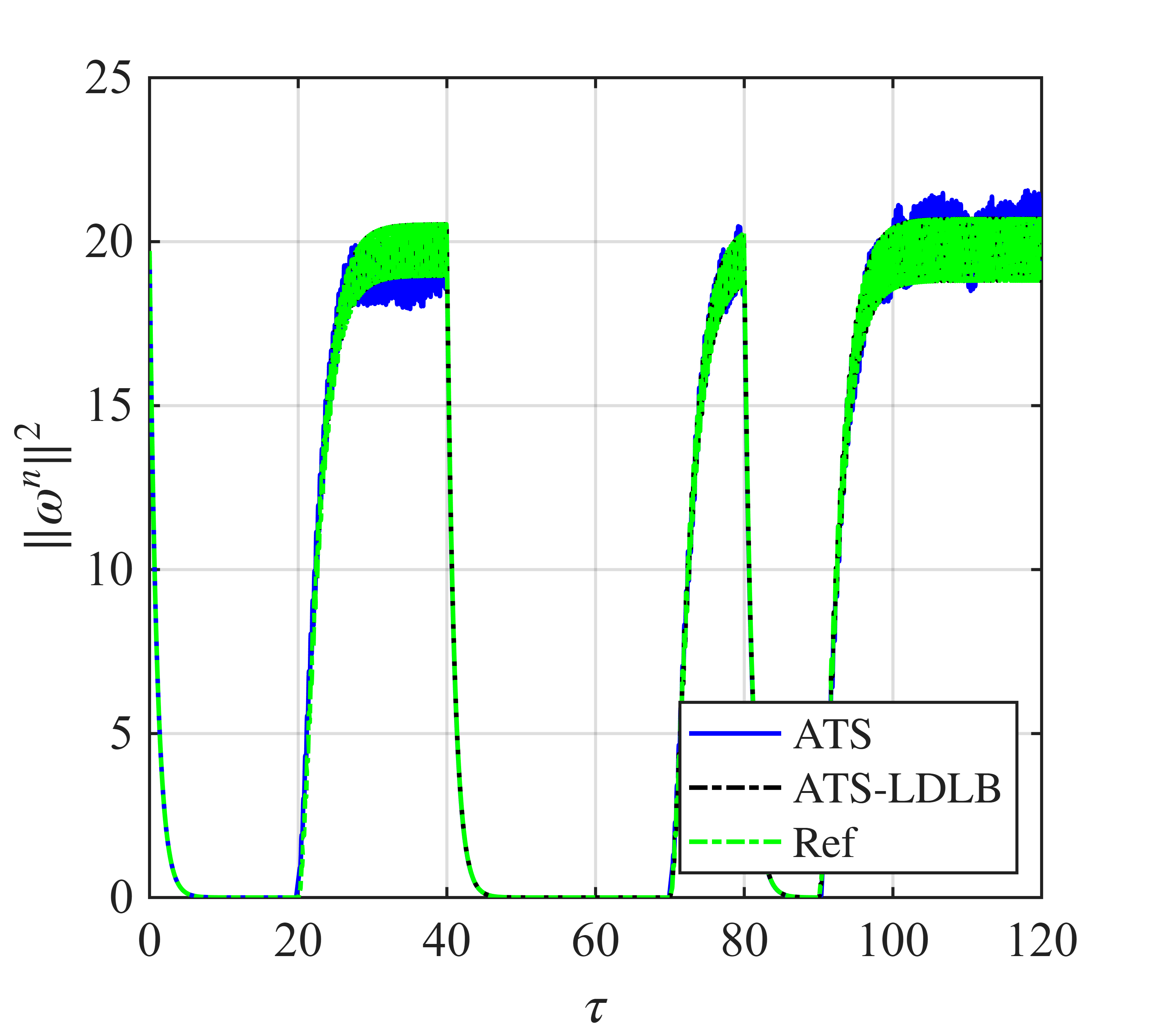}
		} 
		\subfigure[$\tau_{n}$ for Freq-B]{
			\includegraphics[width=0.3\textwidth]{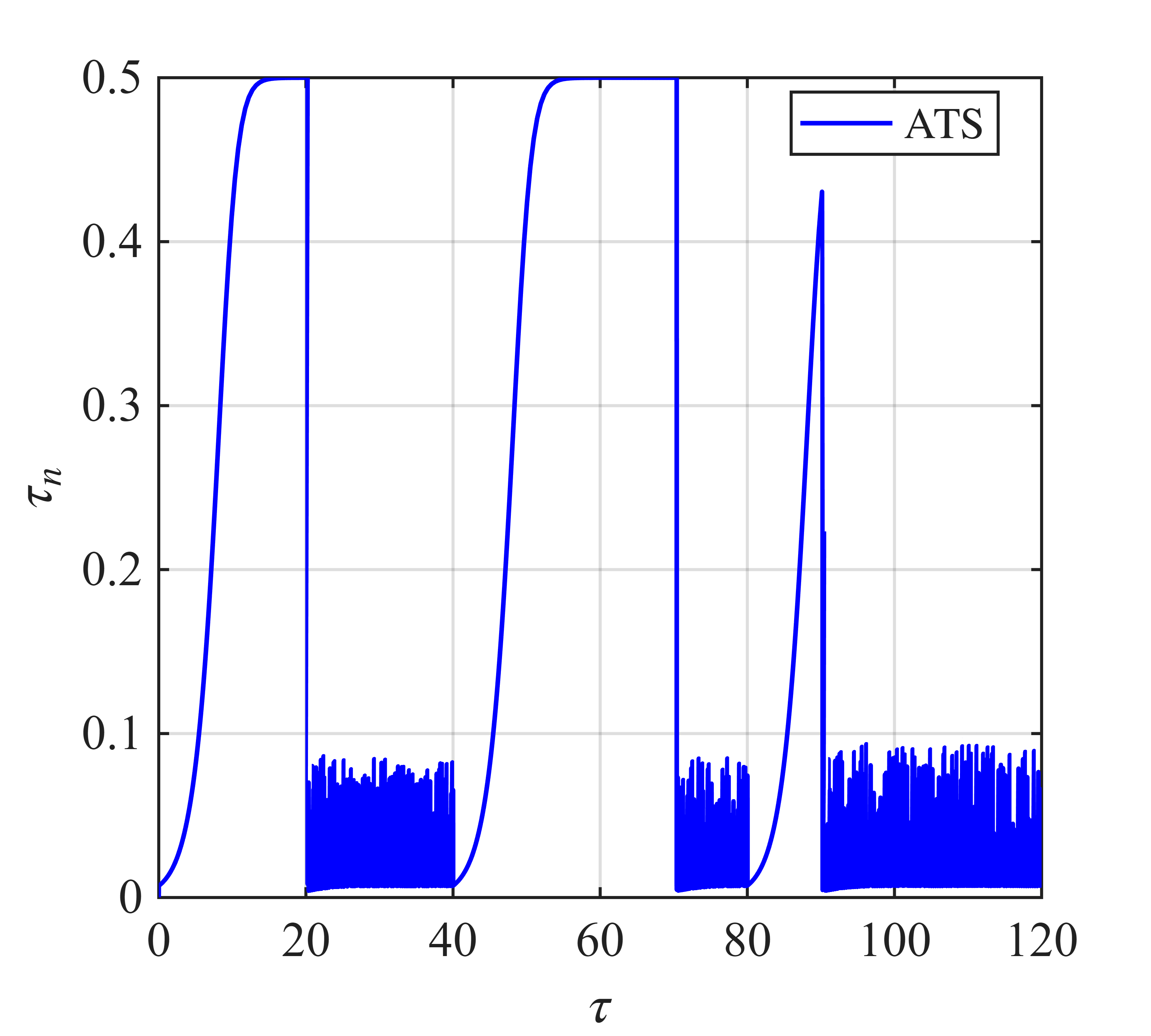}
		} 
		\subfigure[$\tau_{n}$ for Freq-B]{
			\includegraphics[width=0.3\textwidth]{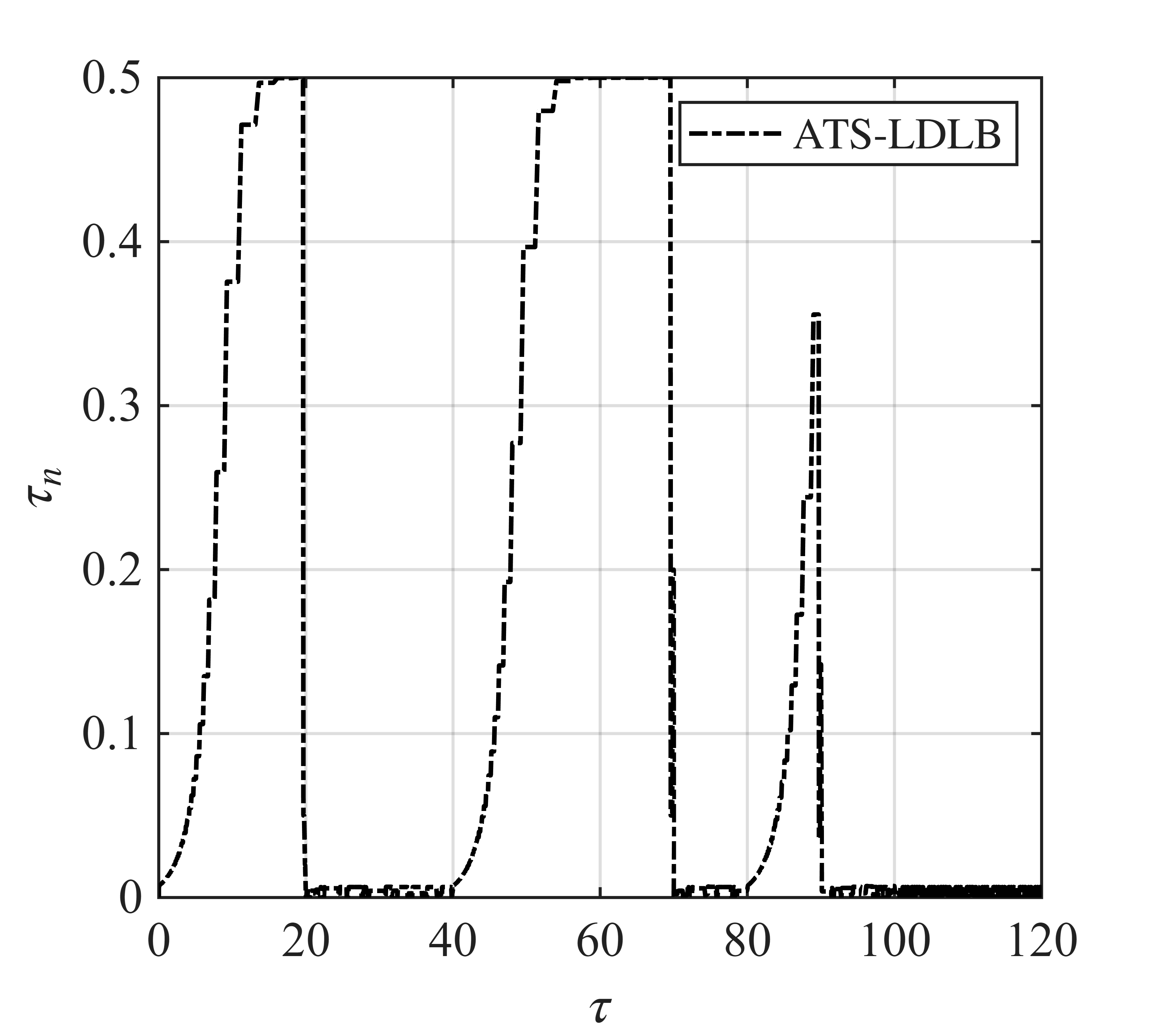}
		}
		\caption{Performance of IERK(4,7;2) method with ATS and ATS-LDLB strategies ($\tau_{\max}=0.5$) for two frequency settings: Freq-A (top) and Freq-B (bottom).}
		\label{fig: ex3-Radau4-sin2T6-dt=0.5}
	\end{figure}

		\begin{figure}[htb!]
		\centering
		\subfigure[$\|\omega^{n}\|^{2}$]{
			\includegraphics[width=0.3\textwidth]{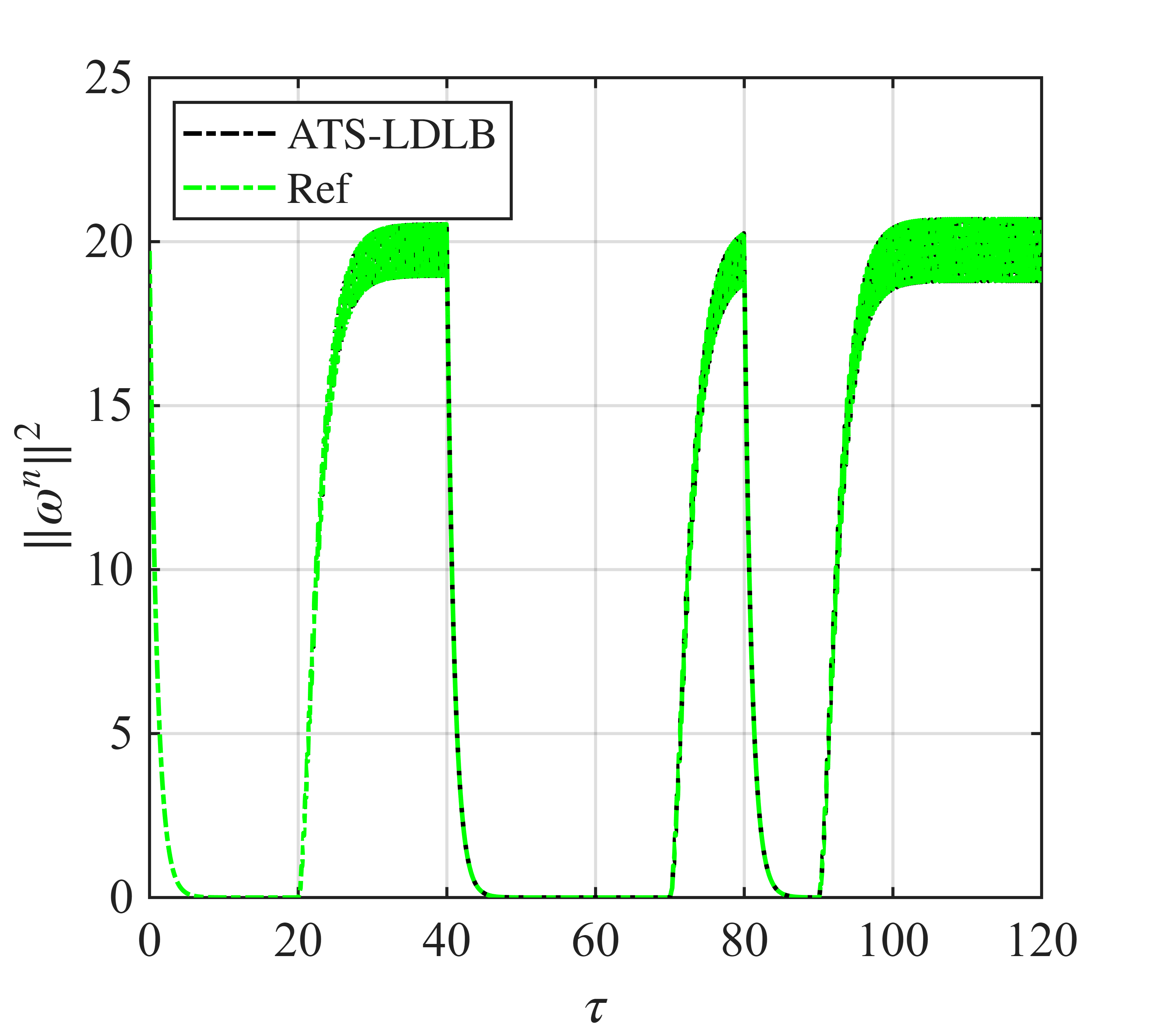}
		}\quad
		\subfigure[$\tau_{n}$]{
			\includegraphics[width=0.3\textwidth]{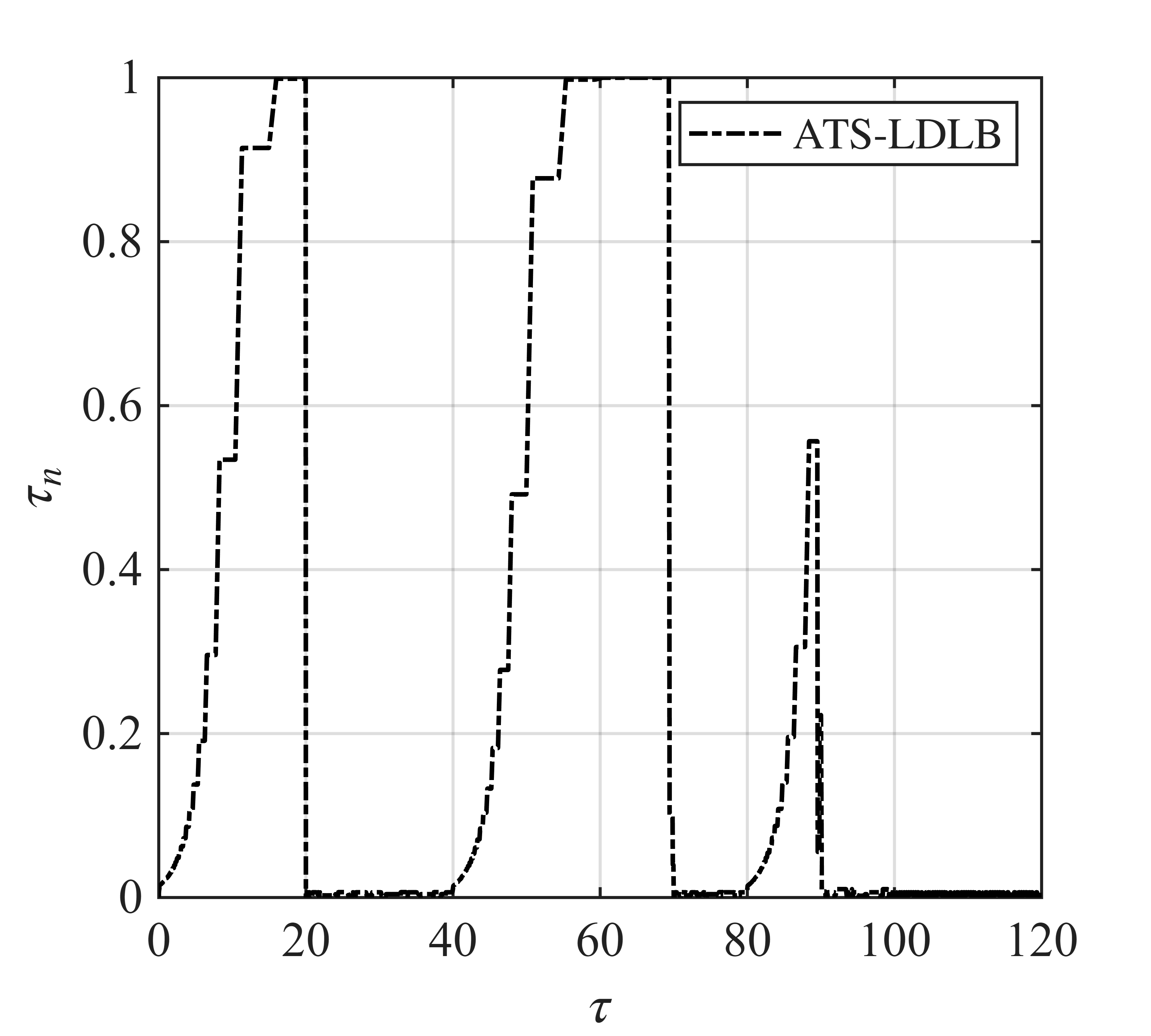}
		} 
		\subfigure[$\|e^{n}_{\text{mix}}\|_{\infty}$]{
			\includegraphics[width=0.3\textwidth]{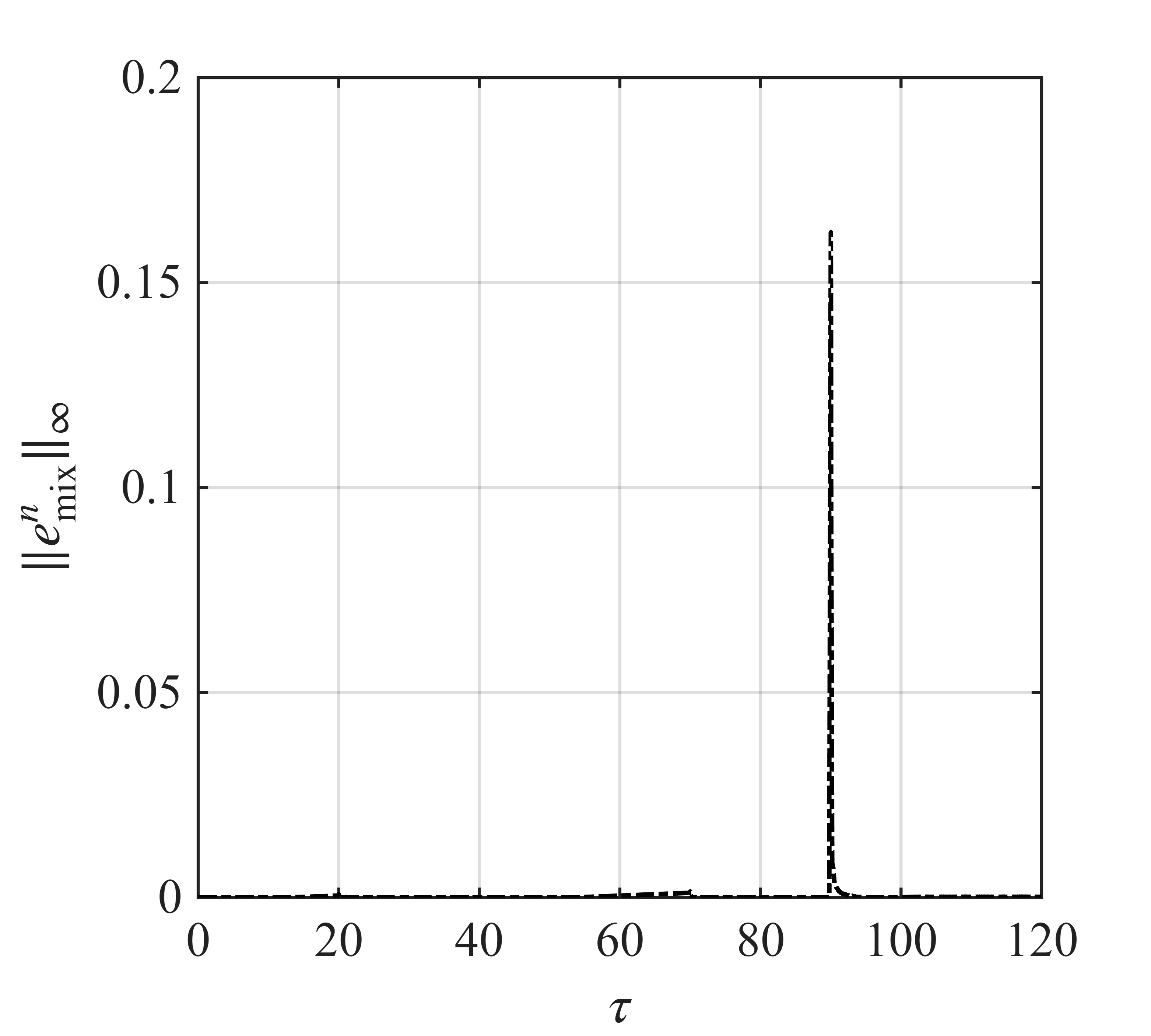}
		}
		\caption{Performance of IERK(4,7;2) with ATS-LDLB strategy ($\tau_{\max}=1$) for Freq-B.}
		\label{fig: ex3-Radau4-sin2T6-dt=1}
	\end{figure}
	
	We run IERK(4,7;2) with the ATS and ATS-LDLB strategies for two groups of frequency settings: (1) Freq-A with $\vec{l_t}=(3,1,5)$ and  (2) Freq-B with $\vec{l_t}=(40,20,50)$.
	As shown in Figure \ref{fig: ex3-Radau4-sin2T6-dt=0.5} with $\tau_{\max}=0.5$, the enstrophy curves generated by both strategies match the exact enstrophy well for Freq-A. However, their step-size behaviors differ significantly. The step sizes for the ATS strategy exhibit severe oscillations, particularly within the three oscillation intervals, cf. Figure \ref{fig: ex3-Radau4-sin2T6-dt=0.5} (b). In contrast, the ATS-LDLB strategy produces much smoother  variations of step-size. It allows the step size $\tau_n$ to increase gradually during the zero-forcing periods and maintains  appropriately small step sizes inside the oscillation intervals, cf. Figure \ref{fig: ex3-Radau4-sin2T6-dt=0.5} (c). For the increased frequencies in Freq-B, the original ATS strategy fails to resolve the rapid dynamics. As shown in Figures \ref{fig: ex3-Radau4-sin2T6-dt=0.5} (d)--(e), the enstrophy curve from ATS exhibits obvious discrepancies from the exact solution because the adaptive step cannot keep up with the fast oscillations. Conversely, the ATS-LDLB strategy performs well. It uses large steps when the forcing is zero and small steps for high-frequency oscillations, maintaining a steady step-size profile without the erratic switching observed in the ATS strategy, cf. Figure \ref{fig: ex3-Radau4-sin2T6-dt=0.5} (f).

	Furthermore, even when the maximum step size is increased to $\tau_{\max}=1$, as shown in Figure \ref{fig: ex3-Radau4-sin2T6-dt=1}, the ATS-LDLB strategy remains capable of accurately simulating the high-frequency oscillations in Freq-B. These results demonstrate that the ATS-LDLB strategy is much more robust and reliable than the original ATS strategy for problems with high-frequency disturbances.

	\newpage
	

	
	\def\siamprelabel{SM}
	\renewcommand{\thesection}{SM\arabic{section}}
	\renewcommand{\thetable}{S\arabic{table}}   
	\renewcommand{\thefigure}{S\arabic{figure}} 
	\renewcommand{\theequation}{S\arabic{section}.\arabic{equation}} 
	\renewcommand{\thealgorithm}{S\arabic{section}.\arabic{algorithm}} 
	
	\setcounter{section}{0}
	\setcounter{page}{1}
	
	
	\begin{center}
			{\bfseries{\MakeUppercase{Supplementary Materials: Long-time stability of implicit-explicit Runge-Kutta 
									methods for two-dimensional incompressible Navier-Stokes equations}}}
		
		\vspace{1em}
		{\small HONG-LIN LIAO, XIAOMING WANG, XUPING WANG {\scriptsize AND} CAO WEN}
		\vspace{2em}	
	\end{center}
	
	
	CONTENT: This supplementary material includes detail derivations of simplified order conditions for IERK methods, the technical proofs of Lemmas \ref{lemma: nonlinear convection bound} and \ref{lemma: nonlinear convection H2 bound}, some new parameterized IERK methods up to fourth-order time accuracy, and the detail descriptions of two time adaptive algorithms: ATS-LD and ATS-LDLB strategies. 
	
	\section{Simplified order conditions for IERK methods}\label{appendix: order conditions}
	\setcounter{equation}{0}
	
	For the ordinary differential equation $y^\prime(t)=f(y)+g(y,t)$ with the linear setting $f(y)=Ly$,  we consider the following IERK scheme 
	\begin{align}
		y_{n + 1} = &\, y_n + \tau \sum_{i = 1}^s b_i f (Y_i) + \tau \sum_{i = 1}^s
		\hat{b}_i g (Y_i, t_n + \tau c_i),  \label{scheme:lin+nonl-nzz}\\
		Y_i = &\, y_n + \tau \omega_i, \quad\text{with}\quad \omega_i = \sum_{j = 1}^s a_{i,j} f (Y_j)
		+ \sum_{j = 1}^s \hat{a}_{i,j} g (Y_j, t_n + \tau c_j), 
		\label{stage:lin+nonl-nzz}
	\end{align}
	where we impose the canopy node condition, that is, $$\sum_{j = 1}^s a_{i, j} = c_i = \sum_{j = 1}^s \hat{a}_{i, j}.$$
	
	\textbf{(Expansion of exact solution)} For the solution $y(t_{n+1})$, the Taylor expansion at $t_n$ is
	\begin{equation}\label{ln-zz:exact solution}
		y (t_{n + 1}) = y (t_n) + \tau y' (t_n) + \frac{\tau^2}{2} y'' (t_n) +
		\frac{\tau^3}{6} y''' (t_n) + \frac{\tau^4}{24} y^{(4)} (t_n) + O (\tau^5),
	\end{equation}
	where, by using the equation $y'=f(y)+g(y,t)$, the time derivatives of $y$ can be expressed in terms of the functions $f$ and $g$ and their derivatives, 
	\begin{align*}
		y' (t) = &\, f + g,\qquad y'' (t) = (f_y + g_y) (f + g) + g_t,\\
		y''' (t) = &\, g_{y y} (f + g)^2 + 2 g_{y t} (f + g) + (f_y + g_y)^2 (f + g)  + g_t (f_y + g_y) + g_{t t},\\
		y^{(4)} (t) = &\, g_{y y y} (f + g)^3 + 3 g_{y y t} (f + g)^2 + 4 g_{y y}  (f_y + g_y) (f + g)^2\\
		&\, + (f_y + g_y)^3 (f + g) + 3 g_{y t t} (f + g) + 5 g_{y t} (f_y + g_y) (f + g) + 3 g_{y y} g_t (f + g)\\
		&\, + g_{t t} (f_y + g_y) + (f_y + g_y)^2 g_t + 3 g_{y t} g_t + g_{t t t}.
	\end{align*}
	
	\textbf{(First-order conditions)} The first-order approximations of $f(Y_i)$ and $g(Y_i, t_{n,i})$, and the increment $\omega_i$ are given by $f (Y_i) = f + O (\tau)$, $g (Y_i, t_{n,i}) = g + O (\tau)$ and
	\begin{align}
		\omega_i = &\, \sum_{j = 1}^s a_{i,j} f + \sum_{j = 1}^s \hat{a}_{i,j} g + O  (\tau) = c_i (f + g) + O (\tau) .  \label{ln-zz:1st-omega}
	\end{align}
	The numerical solution given by \eqref{scheme:lin+nonl-nzz} reduces to
	\[ y_{n + 1} = y_n + \tau \sum_{i = 1}^s b_i f + \tau \sum_{i = 1}^s \hat{b}_i   g + O (\tau) . \]
	Matching the coefficients in the expansions of numerical and exact solutions gives the first-order conditions 
	\begin{equation}
		\sum_{i = 1}^s b_i = 1, \qquad \sum_{i = 1}^s \hat{b}_i = 1.  \label{1st-order condition}
	\end{equation}
	
	\textbf{(Second-order conditions)} Substituting the first-order approximation of the increments \eqref{ln-zz:1st-omega} into the Taylor expansions of  $f(Y_i)$, $g(Y_i, t_{n,i})$ and  $\omega_i$  yields the second-order approximations $f (Y_i) = f + \tau c_i f_y (f + g) + O (\tau^2),$
	\begin{align}
		  &\,	  
		  g (Y_i, t_{n + i}) = g + \tau c_i g_y (f + g) + \tau c_i g_t + O(\tau^2),  \label{ln-nzz:2nd-fg}\\
		&\,\omega_i =  c_i (f + g) + \tau \braB{\sum_{j = 1}^s a_{i,j} c_j f_y +  \sum_{j = 1}^s \hat{a}_{i,j} c_j g_y} (f + g)  \label{ln-nzz:2nd-omega}\\
		&\,\hspace{2cm}+ \tau \sum_{j = 1}^s  \hat{a}_{i,j} c_j g_t + O (\tau^2) . \notag
	\end{align}
	Substituting the second-order approximations \eqref{ln-nzz:2nd-fg} into  \eqref{scheme:lin+nonl-nzz} gives the following expansion
	\begin{align*}
	y_{n + 1} =&\, y_n + \tau \sum_{i = 1}^s b_i f + \tau \sum_{i = 1}^s  \hat{b}_i g + \tau^2 \braB{\sum_{i = 1}^s b_i c_i f_y + \sum_{i = 1}^s  \hat{b}_i c_i g_y} (f + g) \\&\,+ \tau^2 \sum_{i = 1}^s \hat{b}_i c_i g_t  + O (\tau^3),
	\end{align*}
	while the expansion \eqref{ln-zz:exact solution} of the exact solution gives
	\[ y (t_{n + 1}) = y (t_n) + \tau (f + g) + \frac{\tau^2}{2} \kbrab{(f_y + g_y) (f + g) + g_t} + O (\tau^3) . \]
	Matching the coefficients  of $\tau$ and $\tau^2$ terms in the expansions of numerical and exact solutions yields  the first-order conditions \eqref{1st-order condition} and the following conditions 
	\begin{equation}
		\sum_{i = 1}^s b_i c_i = \frac{1}{2}, \qquad \sum_{i = 1}^s \hat{b}_i c_i =  \frac{1}{2} .  
		\label{2nd-order condition}
	\end{equation}
	
	As expected, the linearity assumption of $f$ will not lead to any simplifications of order conditions. Actually, the first and second order conditions \eqref{1st-order condition} and \eqref{2nd-order condition} are derived from the zeroth- and first-order derivative terms of  $f$ and $g$ but do not involve any higher order time derivatives.

	\textbf{(Third-order conditions)}  Substituting the second-order approximation of the increments \eqref{ln-nzz:2nd-omega} into the Taylor expansions of $f(Y_i)$, $g(Y_i, t_{n,i})$ and $\omega_i$ yields the third-order approximations
	\begin{align}
		f (Y_i) = &\, f + \tau c_i f_y (f + g) + \tau^2 \braB{\sum_{j = 1}^s a_{i,j} c_j f_y f_y + \sum_{j = 1}^s \hat{a}_{i,j} c_j f_y g_y} (f + g)\notag\\
		&\, +  \tau^2 \sum_{j = 1}^s \hat{a}_{i,j} c_j f_y g_t + O (\tau^3), \nonumber\\
		g (Y_i, t_{n, i}) = &\, g + \tau c_i g_y (f + g) + \tau^2 \braB{\sum_{j =  1}^s a_{i, j} c_j f_y g_y + \sum_{j = 1}^s \hat{a}_{i,j} c_j g_y g_y}  (f + g)  \label{ln-nzz:3rd-fg}\\
		&\,+ \tau^2 \sum_{j = 1}^s \hat{a}_{i, j} c_j g_y g_t + \tau c_i g_t \notag\\&\,+ \frac{\tau^2}{2} c_i^2 [(f + g)^2 g_{y y} + 2 (f + g)  g_{y t} + g_{t t}] + O (\tau^3), \notag 
\\
		\omega_i = &\, c_i (f + g) + \tau \sum_{j = 1}^s \hat{a}_{i, j} c_j g_t +  \tau \braB{\sum_{j = 1}^s a_{i, j} c_j f_y + \sum_{j = 1}^s \hat{a}_{i, j}  c_j g_y} (f + g)\label{ln-nzz:3rd-omega}\\
		&\, + \tau^2 \braB{\sum_{j = 1}^s a_{i, j} \sum_{q = 1}^s a_{j, q} c_q f_y  f_y + \sum_{j = 1}^s \hat{a}_{i, j} \sum_{q = 1}^s \hat{a}_{j, q} c_j g_y g_y} (f + g) \nonumber\\
		&\, + \tau^2 \braB{\sum_{j = 1}^s \hat{a}_{i, j} \sum_{q = 1}^s a_{j, q}  c_q + \sum_{j = 1}^s a_{i, j} \sum_{q = 1}^s \hat{a}_{j, q} c_q} f_y  g_y (f + g) \nonumber\\
		&\, + \tau^2 \braB{\sum_{j = 1}^s a_{i, j} \sum_{q = 1}^s \hat{a}_{j, q}  c_q f_y + \sum_{j = 1}^s \hat{a}_{i, j} \sum_{q = 1}^s \hat{a}_{j, q} c_q g_y} g_t \nonumber\\
		&\, + \frac{\tau^2}{2} \sum_{j = 1}^s \hat{a}_{i, j} c_j^2 [(f + g)^2 g_{y  y} + 2 (f + g) g_{y t} + g_{t t}] + O (\tau^3) .  \notag
	\end{align}
	Substituting the third-order approximations \eqref{ln-nzz:3rd-fg} into the numerical scheme \eqref{scheme:lin+nonl-nzz} yields the coefficients of  $\tau^3$ term,
	\begin{align*}
		& \braB{\sum_{i = 1}^s b_i \sum_{j = 1}^s a_{i, j} c_j f_y f_y + \sum_{i = 1}^s \hat{b}_i \sum_{j = 1}^s \hat{a}_{i, j} c_j g_y g_y} (f + g)\\
		&\, + \braB{\sum_{i = 1}^s \hat{b}_i \sum_{j = 1}^s a_{i, j} c_j + \sum_{i = 1}^s b_i \sum_{j = 1}^s \hat{a}_{i, j} c_j} f_y g_y (f + g)\\
		& + \braB{\sum_{i = 1}^s b_i \sum_{j = 1}^s \hat{a}_{i, j} c_j f_y +  \sum_{i = 1}^s \hat{b}_i \sum_{j = 1}^s \hat{a}_{i, j} c_j g_y} g_t \\&\,+ \frac{1}{2} \sum_{i = 1}^s \hat{b}_i c_i^2 [(f + g)^2 g_{y y} + 2 (f + g) g_{y t} + g_{t t}],
	\end{align*}
	while the coefficients of $\tau^3$ term in the expansion of exact solution are
	\[ \frac{1}{6} [(f_y + g_y)^2 (f + g) + g_t (f_y + g_y) + g_{y y} (f + g)^2 +  2 g_{y t} (f + g) + g_{t t}]. \]
	Matching the coefficients for $\tau^3$ terms gives the third-order conditions
	\begin{align}
		& \sum_{i = 1}^s \hat{b}_i c_i^2 = \frac{1}{3}, && \sum_{i = 1}^s b_i \sum_{j = 1}^s a_{i, j} c_j = \frac{1}{6},  \label{3rd-order condition} \\
		& \sum_{i = 1}^s \hat{b}_i \sum_{j = 1}^s \hat{a}_{i, j} c_j = \frac{1}{6},&& \sum_{i = 1}^s \hat{b}_i \sum_{j = 1}^s a_{i, j} c_j = \frac{1}{6}, && \sum_{i = 1}^s b_i \sum_{j = 1}^s \hat{a}_{i, j} c_j = \frac{1}{6}.\notag
	\end{align}
	
	In the general nonlinear case, the condition $\sum_{i = 1}^s b_i c_i^2 = \frac{1}{3}$ will be required from the terms involving $f_{yy}$. Under the linear setting of $f$, we have $f_{yy} = 0$ and the entire term 
	$$\frac{1}{2} \sum_{i} b_i c_i^2 [(f + g)^2 f_{y y} + \cdots]$$ vanishes from the Taylor expansion of numerical solution such that the corresponding order condition becomes redundant. It is important to note that while the self-interaction terms of $f$ vanish, the first-order interactions between $f$ and $g$ (and their time derivatives) persist, giving rise to the coupling conditions in \eqref{3rd-order condition}.
	
	\textbf{(Fourth-order conditions)} 	Similarly, substituting the third-order approximation of the increments \eqref{ln-nzz:3rd-omega} into the Taylor expansions of $f(Y_i)$ and $g(Y_i,t_{n,i})$ yields $\tau^3$ terms. Substituting these into the numerical scheme \eqref{scheme:lin+nonl-nzz} and performing the lengthy but straightforward algebra yields the coefficients of $\tau^4$ terms in the expansion of numerical solution. 
	Matching the coefficients for $\tau^4$ terms in the expansions (the lengthy formulas are omitted here) of numerical and exact solutions gives the fourth-order conditions
	\begin{align*}
		& \sum_{i = 1}^s \hat{b}_i c_i^3 = \frac{1}{4}, \\
		&\sum_{i = 1}^s \hat{b}_i c_i \sum_{j = 1}^s \hat{a}_{i, j} c_j = \frac{1}{8}, \qquad\;\; \sum_{i = 1}^s \hat{b}_i c_i \sum_{j = 1}^s a_{i, j} c_j = \frac{1}{8}, \notag\\
		& \sum_{i = 1}^s \hat{b}_i \sum_{j = 1}^s \hat{a}_{i, j} c_j^2 = \frac{1}{12}, \qquad\; \sum_{i = 1}^s b_i \sum_{j = 1}^s \hat{a}_{i, j} c_j^2 = \frac{1}{12},\notag\\
		& \sum_{i = 1}^s \hat{b}_i \sum_{j = 1}^s a_{i, j} \sum_{q = 1}^s \hat{a}_{j, q} c_q + \sum_{i = 1}^s b_i \sum_{j = 1}^s \hat{a}_{i, j} \sum_{q = 1}^s \hat{a}_{j, q} c_q = \frac{1}{12},
		\\& \sum_{i = 1}^s b_i \sum_{j = 1}^s a_{i, j} \sum_{q = 1}^s a_{j, q} c_q = \frac{1}{24}, \notag\\
		& \sum_{i = 1}^s b_i \sum_{j = 1}^s \hat{a}_{i j} \sum_{q = 1}^s a_{j, q} c_q + \sum_{i = 1}^s \hat{b}_i \sum_{j = 1}^s a_{i, j} \sum_{q = 1}^s a_{j, q} c_q = \frac{1}{12},
		\\& \sum_{i = 1}^s b_i \sum_{j = 1}^s a_{i, j} \sum_{q = 1}^s \hat{a}_{j, q} c_q = \frac{1}{24}, \notag\\
		& \sum_{i = 1}^s \hat{b}_i \sum_{j = 1}^s \hat{a}_{i, j} \sum_{q = 1}^s a_{j, q} c_q = \frac{1}{24}, \quad \sum_{i = 1}^s \hat{b}_i \sum_{j = 1}^s \hat{a}_{i, j} \sum_{q = 1}^s \hat{a}_{j, q} c_q = \frac{1}{24}. 
	\end{align*}
	The key simplification stems directly from the linearity assumption of $f$. All terms involving higher-order derivatives of $f$ are removed so that the terms requiring the coupling conditions in the general nonlinear case either vanish or can be re-expressed using lower-order conditions.
	
	In summary, the order conditions for an IERK method applied to the system $y'=L y + g(y,t)$, where the linear term $L y$ is treated implicitly and the nonlinear term $g(y,t)$ is treated explicitly, are given in Table \ref{table: order condition}. This set of order conditions is a subset of classical order conditions in \cite{AscherRuuthSpiteri:1997-SM,IzzoJackiewicz:2017-SM}. They are significantly less restrictive, thereby greatly facilitating the design of fourth-order methods.
	
	\section{Proof of Lemma \ref{lemma: nonlinear convection bound}}\label{appendix: lemma3.1}
	\setcounter{equation}{0}
	
	\begin{proof} The definition \eqref{discrete: skew symmetric form} of nonlinear term $\mathtt{c}_h(\myvec{u},\omega)$ and the skew symmetric property \eqref{discrete: skew symmetric orthogonality} give
		\begin{align*} 
			H_{j}^{i+1}:=&\,2\myinnerb{\mathtt{c}_h(\myvec{u}^{n,j},\omega^{n,j}),\omega^{n,i+1}}
			\notag\\
			=&\,2\myinnerb{\mathtt{c}_h(\myvec{u}^{n,j},\omega^{n,j}-\omega^{n,i+1}),\omega^{n,i+1}}\notag\\
			=&\,\myinnerb{\myvec{u}^{n,j}\cdot\nabla_h (\omega^{n,j}-\omega^{n,i+1}),\omega^{n,i+1}}\notag\\&\,+\myinnerb{\nabla_h \cdot(\myvec{u}^{n,j}(\omega^{n,j}-\omega^{n,i+1})),\omega^{n,i+1}}\notag\\
			=&\,\myinnerb{\omega^{n,i+1}-\omega^{n,j},\nabla_h\cdot (\myvec{u}^{n,j}\omega^{n,i+1})}
			\notag\\&\,+\myinnerb{ \omega^{n,i+1}-\omega^{n,j},\myvec{u}^{n,j}\cdot\nabla_h\omega^{n,i+1}},
		\end{align*}
		where the discrete Green's formula has been used in the last equality. 
		
		It is to note that for the first  term $\nabla_h\cdot (\myvec{u}^{n,j}\omega^{n,i+1})$, it cannot be expanded as $\myvec{u}^{n,j}\cdot\nabla_h \omega^{n,i+1}$
		as in the Fourier Galerkin approximation, even though $\myvec{u}^{n,j}$ is divergence-free at the
		discrete level \eqref{discrete: divergence-free}. By following the derivations in \cite[(4.29)-(4.35)]{GottliebToneWangWangWirosoetisno:2012-SM}, one can apply Lemma \ref{lemma: collocation interpolation}, the H\"{o}lder's inequality, the Sobolev embedding and  the elliptic regularity to obtain that $$\mynormb{\nabla_h\cdot (\myvec{u}^{n,j}\omega^{n,i+1})}\le\ck_{\Omega}\mynormb{\myvec{\omega}_c^{n,j}}_{H^{\delta}}\mynormb{\nabla_h\omega^{n,i+1}}.$$ Moreover, the second term  $\myvec{u}^{n,j}\cdot\nabla_h\omega^{n,i+1}$ can be controlled
		in a similar way, $$\mynormb{\myvec{u}^{n,j}\cdot\nabla_h\omega^{n,i+1}}\le\ck_{\Omega}\mynormb{\myvec{\omega}_c^{n,j}}_{H^{\delta}}\mynormb{\nabla_h\omega^{n,i+1}}.$$ Through the above analysis, it can be concluded that
		
		\begin{align*} 
			H_{j}^{i+1}\le&\,\ck_{\Omega}\mynormb{\myvec{\omega}_c^{n,j}}_{H^{\delta}}
			\mynormb{\omega^{n,i+1}-\omega^{n,j}}\mynormb{\nabla_h\omega^{n,i+1}}\\
			\le&\, \ck_{\Omega}\cm_0\mynormb{\omega^{n,i+1}-\omega^{n,j}}\mynormb{\nabla_h\omega^{n,i+1}}\\
			\le&\, \ck_{\Omega}\cm_0\sum_{\ell=1}^{i}\mynormb{\delta_{\tau}\omega^{n,\ell+1}}\mynormb{\nabla_h\omega^{n,i+1}},
		\end{align*}
		where $\ck_{\Omega}$ is a positive constant  dependent on $\Omega$, but independent of the priori  bound $\cm_0$.
		Then we apply Lemma \ref{lemma: bound quadratic form} (iii) with $\alpha=\beta=2$ and the Cauchy-Schwarz inequality to get
		\begin{align*} 
			\tau_n\sum_{i=1}^k\sum_{j=1}^{i}\underline{\hat{a}}_{i+1,j}H_{j}^{i+1}
			\le&\, \ck_{\Omega}\cm_0\sigma_{\mathrm{E}}\tau_n\sqrt{\sum_{i=1}^ki\sum_{\ell=1}^{i}\mynormb{\delta_{\tau}\omega^{n,\ell+1}}^2}\sqrt{\sum_{i=1}^k\mynormb{\nabla_h\omega^{n,i+1}}^2}\\
			\le&\, \ck_{\Omega}\cm_0\sigma_{\mathrm{E}}\tau_n\sqrt{\sum_{\ell=1}^{k}\mynormb{\delta_{\tau}\omega^{n,\ell+1}}^2\sum_{i=1}^ki}\sqrt{\sum_{i=1}^k\mynormb{\nabla_h\omega^{n,i+1}}^2}\\
			\le&\, \frac{k+1}{\sqrt{2}}\ck_{\Omega}\cm_0\sigma_{\mathrm{E}}\tau_n\sqrt{\sum_{i=1}^{k}\mynormb{\delta_{\tau}\omega^{n,i+1}}^2}\sqrt{\sum_{i=1}^k\mynormb{\nabla_h\omega^{n,i+1}}^2}.
		\end{align*}
		Since $k+1\le s$, the Young's inequality arrives at the claimed result.
	\end{proof}
	
	\section{Proof of Lemma \ref{lemma: nonlinear convection H2 bound}}\label{appendix: lemma3.2}	
	\begin{proof} To handle the nonlinear convection term $\mathtt{c}_h(\myvec{u}^{n,j},\omega^{n,j})$, we make the following decompositions 
		\begin{align*} 
			&\myvec{u}^{n,j}\cdot\nabla_h\omega^{n,j}
			=\myvec{u}^{n,i+1}\cdot\nabla_h\omega^{n,i+1}
			-\myvec{u}^{n,j}\cdot\nabla_h(\omega^{n,i+1}-\omega^{n,j})\\
			&\,\hspace{3cm}
			-(\myvec{u}^{n,i+1}-\myvec{u}^{n,j})\cdot\nabla_h\omega^{n,i+1},\notag\\
			&\nabla_h\cdot(\myvec{u}^{n,j}\omega^{n,j})
			=\nabla_h\cdot\kbra{\myvec{u}^{n,i+1}\omega^{n,i+1}
				-\myvec{u}^{n,j}(\omega^{n,i+1}-\omega^{n,j})
				-(\myvec{u}^{n,i+1}-\myvec{u}^{n,j})\omega^{n,i+1}}.
		\end{align*}
		Thus, according to the definition \eqref{discrete: skew symmetric form},
		the term $\widehat{H}_{j}^{i+1}:=2\myinnerb{\mathtt{c}_h(\myvec{u}^{n,j},\omega^{n,j}),\Delta_h\omega^{n,i+1}}$
		can be handled by decomposing it into three parts
		\begin{align}\label{lemmaProof H2 bound-1}
			&\,\widehat{H}_{j}^{i+1}=\myinnerb{\myvec{u}^{n,j}\cdot\nabla_h \omega^{n,j},\Delta_h\omega^{n,i+1}}+\myinnerb{\nabla_h \cdot(\myvec{u}^{n,j}\omega^{n,j}),\Delta_h\omega^{n,i+1}}\\
		&\,\hspace{0.18cm}	=-\myinnerb{\nabla_h\cdot\kbra{\myvec{u}^{n,j}(\omega^{n,i+1}-\omega^{n,j})}+\myvec{u}^{n,j}\cdot\nabla_h(\omega^{n,i+1}-\omega^{n,j}),\Delta_h\omega^{n,i+1}}\notag
			\\
			&\,\hspace{0.18cm}-\myinnerb{(\myvec{u}^{n,i+1}-\myvec{u}^{n,j})\cdot\nabla_h\omega^{n,i+1}
				+\nabla_h\cdot\kbra{(\myvec{u}^{n,i+1}-\myvec{u}^{n,j})\omega^{n,i+1}},\Delta_h\omega^{n,i+1}}\notag
			\\
			&\,\hspace{0.18cm}+\myinnerb{\myvec{u}^{n,i+1}\cdot\nabla_h\omega^{n,i+1}+\nabla_h\cdot\bra{\myvec{u}^{n,i+1}\omega^{n,i+1}},\Delta_h\omega^{n,i+1}}\notag\\
			&\,\hspace{0.18cm}=-2\myinnerb{\mathtt{c}_h(\myvec{u}^{n,j},\omega^{n,i+1}-\omega^{n,j}),\Delta_h\omega^{n,i+1}}	\notag\\
			&\,\hspace{0.18cm}-2\myinnerb{\mathtt{c}_h(\myvec{u}^{n,i+1}-\myvec{u}^{n,j},\omega^{n,i+1}),\Delta_h\omega^{n,i+1}}\notag\\
			&\,\hspace{0.18cm}+2\myinnerb{\mathtt{c}_h(\myvec{u}^{n,i+1},\omega^{n,i+1}),\Delta_h\omega^{n,i+1}}
		=:\widehat{H}_{j,1}^{i+1}+\widehat{H}_{j,2}^{i+1}+\widehat{H}_{j,3}^{i+1}.\notag
		\end{align}
		By following the derivations in \cite[(4.49)-(4.54)]{GottliebToneWangWangWirosoetisno:2012-SM}, one can apply Lemma \ref{lemma: collocation interpolation}, the H\"{o}lder's inequality, the Sobolev embedding and  the elliptic regularity to obtain that
		\begin{align*} 
			\widehat{H}_{j,1}^{i+1}\le&\, {\ck}_{\Omega1}\cm_0\mynormb{\nabla_h(\omega^{n,i+1}-\omega^{n,j})}\mynormb{\Delta_h\omega^{n,i+1}}\\
			\le&\, {\ck}_{\Omega1}\cm_0\sum_{\ell=1}^{i}\mynormb{\delta_{\tau}\nabla_h\omega^{n,\ell+1}}\mynormb{\Delta_h\omega^{n,i+1}},
		\end{align*}
		where ${\ck}_{\Omega1}$ is a positive constant  dependent on $\Omega$, but independent of the priori  bound $\cm_0$. Then one applies Lemma \ref{lemma: bound quadratic form} (iii) with $\alpha=\beta=2$ and the Cauchy-Schwarz inequality to get
		\begin{align} \label{lemmaProof H2 bound-2}
			&\,\tau_n\sum_{i=1}^k\sum_{j=1}^{i}\underline{\hat{a}}_{i+1,j}\widehat{H}_{j,1}^{i+1}\\
			&\,\hspace{0.8cm}\le {\ck}_{\Omega1}\cm_0\sigma_{\mathrm{E}}\tau_n
			\sqrt{\sum_{i=1}^ki\sum_{\ell=1}^{i}\mynormb{\delta_{\tau}\nabla_h\omega^{n,\ell+1}}^2}
			\sqrt{\sum_{i=1}^k\mynormb{\Delta_h\omega^{n,i+1}}^2}\notag\\
			&\,\hspace{0.8cm}\le {\ck}_{\Omega1}\cm_0\sigma_{\mathrm{E}}\tau_n\sqrt{\sum_{\ell=1}^{k}\mynormb{\delta_{\tau}\nabla_h\omega^{n,\ell+1}}^2\sum_{i=1}^ki}\sqrt{\sum_{i=1}^k\mynormb{\Delta_h\omega^{n,i+1}}^2}\notag\\
					&\,\hspace{0.8cm}\le \frac{k+1}{\sqrt{2}}{\ck}_{\Omega1}\cm_0\sigma_{\mathrm{E}}\tau_n\sqrt{\sum_{i=1}^{k}\mynormb{\delta_{\tau}\nabla_h\omega^{n,i+1}}^2}\sqrt{\sum_{i=1}^k\mynormb{\Delta_h\omega^{n,i+1}}^2}\notag\\
			&\,\hspace{0.8cm}\le \frac{3s^2}{8\epsilon}{\ck}_{\Omega1}^2\cm_0^2\sigma_{\mathrm{E}}^2\tau_n\sum_{i=1}^{k}\mynormb{\delta_{\tau}\nabla_h\omega^{n,i+1}}^2+\frac{\epsilon\tau_n}{3}\sum_{i=1}^k\mynormb{\Delta_h\omega^{n,i+1}}^2.\notag
		\end{align}
		By following the derivations in \cite[(4.56)-(4.64)]{GottliebToneWangWangWirosoetisno:2012-SM}, one can apply the Poincar\'{e} inequality, the Sobolev embedding and  the elliptic regularity to obtain that
		\begin{align*} 
			\widehat{H}_{j,2}^{i+1}\le&\, {\ck}_{\Omega2}\cm_1^{\frac{3}2}\mynormb{\nabla_h(\omega^{n,i+1}-\omega^{n,j})}^{\delta}\mynormb{\Delta_h\omega^{n,i+1}}^{\frac{3}2}\\
			\le&\, {\ck}_{\Omega2}\cm_1^{\frac{3}2}\sum_{\ell=1}^{i}\mynormb{\delta_{\tau}\nabla_h\omega^{n,\ell+1}}^{\delta}\mynormb{\Delta_h\omega^{n,i+1}}^{\frac{3}2},
		\end{align*}
		where ${\ck}_{\Omega2}$ is a positive constant  dependent on $\Omega$, but independent of the priori  bound $\cm_0$. Then one applies Lemma \ref{lemma: bound quadratic form} (iii) with $\alpha=4,\;\beta=\tfrac43$  to get
		\begin{align*} 
			&\,\tau_n\sum_{i=1}^k\sum_{j=1}^{i} \underline{\hat{a}}_{i+1,j}\widehat{H}_{j,2}^{i+1}\\
				&\,\hspace{0.8cm}\le{\ck}_{\Omega2}\cm_1^{\frac{3}2}k^{\frac{1}{4}}\sigma_{\mathrm{E}}\tau_n
			\kbra{\sum_{i=1}^k\braB{\sum_{\ell=1}^{i}\mynormb{\delta_{\tau}\nabla_h\omega^{n,\ell+1}}^{\delta}}^4}^{\frac1{4}}
			\braB{\sum_{i=1}^k\mynormb{\Delta_h\omega^{n,i+1}}^{2}}^{\frac{3}{4}}\\
				&\,\hspace{0.8cm}\le
			\frac{9k}{64\epsilon}{\ck}_{\Omega2}^4\cm_1^{6}\sigma_{\mathrm{E}}^4\tau_n
			\sum_{i=1}^k\braB{\sum_{\ell=1}^{i}\mynormb{\delta_{\tau}\nabla_h\omega^{n,\ell+1}}^{\delta}}^4+
			\frac{\epsilon\tau_n}{3}\sum_{i=1}^k\mynormb{\Delta_h\omega^{n,i+1}}^{2},
		\end{align*}
		where the Young inequality, $ab\le \frac{1}{4p\tilde{\epsilon}}a^p+\frac{\tilde{\epsilon}}{q}b^q$ for $p=4$, $q=\frac{4}{3}$ and $\tilde{\epsilon}=\frac{4\epsilon}{9}$, was used in the second inequality. Moreover,
		\begin{align*} 
			\sum_{i=1}^k\braB{\sum_{\ell=1}^{i}\mynormb{\delta_{\tau}\nabla_h\omega^{n,\ell+1}}^{\delta}}^4
			\le&\,\sum_{i=1}^ki^2\braB{\sum_{\ell=1}^{i}\mynormb{\delta_{\tau}\nabla_h\omega^{n,\ell+1}}^{2\delta}}^2\\
			\le&\,\sum_{i=1}^ki^3\sum_{\ell=1}^{i}\mynormb{\delta_{\tau}\nabla_h\omega^{n,\ell+1}}^{4\delta}
			\le k^4\sum_{\ell=1}^{k}\mynormb{\delta_{\tau}\nabla_h\omega^{n,\ell+1}}^{4\delta}\\
			\le&\, k^4\sum_{\ell=1}^{k}\brat{1-2\delta}+2k^4\delta\sum_{\ell=1}^{k}\mynormb{\delta_{\tau}\nabla_h\omega^{n,\ell+1}}^{2}.
		\end{align*}
		Thus it follows that
		\begin{align}  \label{lemmaProof H2 bound-3}
			&\,\tau_n\sum_{i=1}^k\sum_{j=1}^{i} \underline{\hat{a}}_{i+1,j}\widehat{H}_{j,2}^{i+1}
			\le
			\frac{9\delta }{32\epsilon}{\ck}_{\Omega2}^4s^5\cm_1^{6}\sigma_{\mathrm{E}}^4\tau_n
			\sum_{\ell=1}^{k}\mynormb{\delta_{\tau}\nabla_h\omega^{n,\ell+1}}^{2}\\
		&\,\hspace{2.2cm}	+
			\frac{\epsilon\tau_n}{3}\sum_{i=1}^k\mynormb{\Delta_h\omega^{n,i+1}}^{2}+\frac{9}{64\epsilon}\brat{1-2\delta}{\ck}_{\Omega2}^4s^6\cm_1^{6}\sigma_{\mathrm{E}}^4\tau_n.\notag
		\end{align}
		For the term $\widehat{H}_{j,3}^{i+1}$ in \eqref{lemmaProof H2 bound-1}, one can follow the derivations in \cite[(4.68)-(4.73)]{GottliebToneWangWangWirosoetisno:2012-SM} to get
		\begin{align*} 
			\widehat{H}_{j,3}^{i+1}\le&\, {\ck}_{\Omega3}\cm_1^{\frac{3}2}\mynormb{\Delta_h\omega^{n,i+1}}^{\frac{3+\delta}2},
		\end{align*}
		where ${\ck}_{\Omega3}$ is a positive constant  dependent on $\Omega$, but independent of the priori bound $\cm_0$. Then one applies Lemma \ref{lemma: bound quadratic form} (iii) with $\alpha=1$ and $\beta=\infty$ to get
		\begin{align}  \label{lemmaProof H2 bound-4}
			\tau_n\sum_{i=1}^k\sum_{j=1}^{i}\underline{\hat{a}}_{i+1,j}\widehat{H}_{j,3}^{i+1}
			\le&\, {\ck}_{\Omega3}\cm_1^{\frac{3}2}k^{\frac1{2}}\sigma_{\mathrm{E}}\tau_n\sum_{i=1}^k\mynormb{\Delta_h\omega^{n,i+1}}^{\frac{3+\delta}2}\\
			\le&\, \frac{(1-\delta)}{4\epsilon}{\ck}_{\Omega3}^2\cm_1^{3}s^{2}\sigma_{\mathrm{E}}^2
			\tau_n
			+	\frac{\epsilon\tau_n}{3}\sum_{i=1}^k\mynormb{\Delta_h\omega^{n,i+1}}^{2},\notag
		\end{align}
		where the Young inequality, $ab\le \frac{1}{4p\tilde{\epsilon}}a^p+\frac{\tilde{\epsilon}}{q}b^q$ with 
		$$
		p=\frac{4}{1-\delta},\quad q=\frac{4}{3+\delta}\quad\text{and}\quad
		\tilde{\epsilon}=\frac{4\epsilon}{3(3+\delta){\ck}_{\Omega3}\sigma_{\mathrm{E}}}\cm_1^{-\frac{3}2}s^{-\frac1{2}},$$ was used in the second inequality. By using the decomposition \eqref{lemmaProof H2 bound-1}, we collect the above three estimates \eqref{lemmaProof H2 bound-2}-\eqref{lemmaProof H2 bound-4} to obtain the claimed result by taking the constants
		$$\hat{\ck}_{\Omega}:=6{\ck}_{\Omega1}^2\cm_0^2
		+\frac{9 }{2}\delta{\ck}_{\Omega2}^4\cm_1^{6}s^3\sigma_{\mathrm{E}}^2,$$
		and $$ \bar{\ck}_{\Omega}:=\frac{9}{4}\brat{1-2\delta}{\ck}_{\Omega2}^4\cm_1^{6}s^4\sigma_{\mathrm{E}}^2
		+4(1-\delta){\ck}_{\Omega3}^2\cm_1^{3}.$$
		Here, $\bar{\ck}_{\Omega}$ is dependent on  the upper bound $\cm_1$ of $L^2$ norm for the vorticity,  but always independent of the priori  bound $\cm_0$.
		The proof is completed.
	\end{proof}
	
	\section{New parameterized IERK methods}\label{sec: IERK methods}
	\setcounter{equation}{0}

	Second-order methods require at least two implicit stages ($s_{\mathrm{I}}=2$). Consider the 3-stage IERK methods that satisfy the canopy node condition and the two order conditions for first-order accuracy,
	\begin{equation*}
		\begin{array}{c|c}
			\mathbf{c} & A \\
			\hline\\[-10pt]   & \mathbf{b}^T
		\end{array}
		= \begin{array}{c|ccc}
			0 & 0 &  &      \\
			c_{2} & 0 & c_{2} &   \\
			1 & 0 & 1-a_{33} & a_{33}    \\
			\hline\\[-10pt]   &  0 & 1-a_{33} & a_{33}
		\end{array}\;,\quad
		\begin{array}{c|c}
			\hat{\mathbf{c}} & \widehat{A} \\
			\hline\\[-10pt]   & \hat{\mathbf{b}}^T
		\end{array}
		=\begin{array}{c|ccc}
			0 & 0 &  &     \\
			c_{2} & c_2 & 0 &   \\
			1 & 1-\hat{a}_{32} &\hat{a}_{32} & 0   \\
			\hline\\[-10pt]   & 1-\hat{a}_{32} &\hat{a}_{32} & 0
		\end{array}\;.
	\end{equation*}
	We determine the three independent coefficients $c_2$, $a_{33}$ and $\hat{a}_{32}$ using the two order conditions for second-order accuracy (see the second line in Table \ref{table: order condition}), which yields $$\hat{a}_{32}=\tfrac{1}{2c_2}\quad\text{and}\quad a_{33}=\frac{2 c_2-1}{2 (c_2-1)}. $$
	Thus, we obtain the parameterized IERK(2,3;$c_2$) methods with the following Butcher tableaux
	\begin{equation*}
		\begin{array}{c|c}
			\mathbf{c} & A \\
			\hline\\[-10pt]   & \mathbf{b}^T
		\end{array}
		= \begin{array}{c|ccc}
			0 & 0 &  &      \\
			c_{2} & 0 & c_{2} &   \\[3pt]
			1 & 0 & \frac{1}{2-2c_2} & \frac{1-2 c_2}{2-2c_2}    \\[3pt]
			\hline\\[-10pt]   &  0 & \frac{1}{2-2c_2} & \frac{1-2 c_2}{2-2c_2} 
		\end{array}\;,\qquad
		\begin{array}{c|c}
			\hat{\mathbf{c}} & \widehat{A} \\
			\hline\\[-10pt]   & \hat{\mathbf{b}}^T
		\end{array}
		=\begin{array}{c|ccc}
			0 & 0 &  &     \\
			c_{2} & c_2 & 0 &   \\[3pt]
			1 & 1-\tfrac{1}{2c_2} &\tfrac{1}{2c_2} & 0   \\[3pt]
			\hline\\[-10pt]   & 1-\tfrac{1}{2c_2} &\tfrac{1}{2c_2} & 0
		\end{array}\;.
	\end{equation*}
	The matrix $E_{s_{\mathrm{I}}}^{-1}A_{\mathrm{I}}$ of the difference coefficients in the implicit part  is given by
	\begin{align*}
		E_{s_{\mathrm{I}}}^{-1}A_{\mathrm{I}}=\begin{pmatrix}
				c_2 & 0 \\
				\frac{2 c_2^2-2 c_2+1}{2-2 c_2} & \frac{1-2 c_2}{2-2 c_2} 
			\end{pmatrix},\quad
	\end{align*}
	Note that, the matrix $\mathcal{S}(E_{s_{\mathrm{I}}}^{-1}A_{\mathrm{I}})$ is positive definite if $0.116337<c_2<0.434174$ or $1.15161 < c_2 < 4.29788$, which ensures that the parameterized IERK(2,3;$c_2$) methods satisfy the condition of Theorem \ref{thm: IERK long-time stability}.

	Third-order methods require at least three implicit stages ($s_{\mathrm{I}}=3$). However, we cannot find any four-stage IERK methods such that the associated matrix $E_{s_{\mathrm{I}}}^{-1}A_{\mathrm{I}}$ is positive definite; see the analysis in \cite[Subsection 4.1]{LiaoWangWen:2024IERK-SM}. We remark that the IERK methods in \cite[Subsection 2.8]{AscherRuuthSpiteri:1997-SM} do not fulfill Theorem \ref{thm: IERK long-time stability} since the associated matrices $E_{s_{\mathrm{I}}}^{-1}A_{\mathrm{I}}$ are not positive definite.
	
	We present the following parameterized IERK(3,5;$a_{55}$) methods with the following Butcher tableaux
	\begin{align*}
		&\begin{array}{c|c}
			\mathbf{c} & A \\
			\hline\\[-10pt]   & \mathbf{b}^T
		\end{array}
		= \begin{array}{c|ccccc}
			0 & 0 &  &  &  &  \\
			1 & 0 & 1 &  &  &  \\[3pt]
			\frac{1}{2} & 0 & -\frac{3}{10} & \frac{4}{5} &  &  \\[3pt]
			\frac{9}{10} & 0 & -\frac{367}{250} & \frac{196}{125} & \frac{4}{5} &  \\[3pt]
			1 & 0 & -\frac{2(36 a_{55}-5)}{147} & \frac{75 a_{55}+598}{588} & -\frac{25(15 a_{55}+2)}{588} & a_{55}  \\[3pt]
			\hline\\[-10pt]
			& 0 & -\frac{2(36 a_{55}-5)}{147}  & \frac{75 a_{55}+598}{588}  & -\frac{25(15 a_{55}+2)}{588} & a_{55} 
		\end{array},\\
		&\begin{array}{c|c}
			\hat{\mathbf{c}} & \widehat{A} \\
			\hline\\[-10pt]   & \hat{\mathbf{b}}^T
		\end{array}
		=\begin{array}{c|ccccc}
			0 & 0 &  &  &  &  \\
			1 & 1 & 0 &  &  &  \\[3pt]
			\frac{1}{2} & \frac{939 a_{55}+282}{2640 a_{55}+940} & \frac{381 a_{55}+188}{2640 a_{55}+940} & 0 &  &  \\[3pt]
			\frac{9}{10} & \frac{9 (639 a_{55}+1222)}{250 (132 a_{55}+47)} & \frac{9}{10} & -\frac{9 (639 a_{55}+1222)}{250 (132 a_{55}+47)} & 0 &  \\[4pt]
			1 & \frac{47}{270} & \frac{1}{10} & \frac{19}{30} & \frac{5}{54} & 0 \\[3pt]
			\hline\\[-10pt]
			& \frac{47}{270} & \frac{1}{10} & \frac{19}{30} & \frac{5}{54} & 0
		\end{array}\ .
	\end{align*}
	Simple calculations show that the matrix $\mathcal{S}(E_{s_{\mathrm{I}}}^{-1}A_{\mathrm{I}})$ is positive definite if $$0.626214<a_{55}<2.10996,$$ which ensures that the IERK(3,5;$a_{55}$) methods satisfy the  condition of Theorem \ref{thm: IERK long-time stability}.

	Fourth-order methods require at least four implicit stages ($s_{\mathrm{I}}=4$). Unfortunately, we were unable to construct any IERK methods with fewer  than seven stages such that the associated matrix $E_{s_{\mathrm{I}}}^{-1}A_{\mathrm{I}}$ is positive definite. 
	Under the new order condition in Table \ref{table: order condition}, we present the following parameterized IERK(4,7;$\hat{a}_{43}$) methods with the Butcher tableaux
	\begin{align*}
		\begin{array}{c|c}
			\mathbf{c} & A \\
			\hline\\[-10pt]   & \mathbf{b}^T
		\end{array}
		= \begin{array}{c|ccccccc}
			0 & 0 &  &  &  &  &  &  \\
			\frac{3}{4} & 0 & \frac{3}{4} &  &  &  &  &  \\[3pt]
			1 & 0 & -\frac{1}{2} & \frac{3}{2} &  &  &  &  \\[3pt]
			\frac{9}{20} & 0 & -\frac{169}{800} & \frac{129}{800} & \frac{1}{2} &  &  &  \\[4pt]
			\frac{3}{4} & 0 & a_{42} & a_{43}& \frac{4021588899578801}{4257206032921875} & \frac{144648284471}{278085937500} &  &  \\[4pt]
			\frac{3}{4} & 0 & a_{52} & a_{53}& \frac{2547104330002710487}{10159241669472656250} & -\frac{3921377950657453}{7299755859375000} & \frac{4}{5} &  \\[4pt]
			1 & 0 & \frac{94181}{262500} & -\frac{53}{100} & \frac{3}{5} & -\frac{125681}{262500} & \frac{4}{5} & \frac{1}{4} \\[3pt]
			\hline\\[-10pt]
			& 0 & \frac{94181}{262500} & -\frac{53}{100} & \frac{3}{5} & -\frac{125681}{262500} & \frac{4}{5} & \frac{1}{4}
		\end{array},
	\end{align*}	
	\begin{align*}
		\begin{array}{c|c}
			\hat{\mathbf{c}} & \widehat{A} \\
			\hline\\[-10pt]   & \hat{\mathbf{b}}^T
		\end{array}
		=\begin{array}{c|cccccccc}
			0 & 0 &  &  &  &  &  &  \\
			\frac{3}{4} & \frac{3}{4} & 0 &  &  &  &  &  \\[3pt]
			1 & \frac{7}{10} & \frac{3}{10} & 0 &  &  &  &  \\[3pt]
			\frac{9}{20} & \frac{1}{3} \hat{a}_{43}+\frac{1557}{4000} & \frac{243}{4000}-\frac{4}{3} \hat{a}_{43} & \hat{a}_{43} & 0 &  &  &  \\[3pt]
			\frac{3}{4} & \hat{a}_{51} & \hat{a}_{52} & \hat{a}_{53} & \frac{8}{5} & 0 &  &  \\[3pt]
			\frac{3}{4} & \hat{a}_{61} & \hat{a}_{62} & \hat{a}_{63} & -\frac{201267778}{267906771} & \frac{3}{10} & 0 &  \\[3pt]
			1 & \frac{25}{162} & -\frac{811}{540} & \frac{3}{22} & \frac{500}{891} & \frac{3}{4} & \frac{9}{10} & 0 \\[3pt]
			\hline\\[-10pt]
			& \frac{25}{162} & -\frac{811}{540} & \frac{3}{22} & \frac{500}{891} & \frac{3}{4} & \frac{9}{10} & 0
		\end{array}\ ,
	\end{align*}	
	with the coefficients
	\begin{align*}
		& a_{42} = -\frac{11099846794473413537}{13545655559296875000}, \quad
		a_{43} =\frac{5938991227245191}{56762747105625000}  \\
		&a_{52} = -\frac{15012700453574148059759}{355573458431542968750000}, 
		\quad
		a_{53} =\frac{37751222339857820917}{135456555592968750000} \\
		& \hat{a}_{51} = \frac{70997500000 \hat{a}_{43}+1042842334347}{2411160939000},   \\
		&\hat{a}_{61} = \frac{1913150328903-672359500000 \hat{a}_{43}}{2893393126800}, \\
		& \hat{a}_{52} = \frac{-283990000000 \hat{a}_{43}-5126845621293}{2411160939000},   \\
		&\hat{a}_{53} = \frac{70997500 \hat{a}_{43}}{803720313}+\frac{570851989}{676532250}, \\
		& \hat{a}_{62} = \frac{2689438000000 \hat{a}_{43}+3223449241353}{2893393126800}, \\
		&\hat{a}_{63} = \frac{-168089875000 \hat{a}_{43}-138406721733}{241116093900},
	\end{align*}
	Note that $\mathcal{S}(E_{s_{\mathrm{I}}}^{-1}A_{\mathrm{I}})$ is positive definite for any real value $\hat{a}_{43}$, ensuring that the parameterized IERK(4,7;$\hat{a}_{43}$) methods satisfy 
	the sufficient condition of Theorem \ref{thm: IERK long-time stability}.	
	
	\section{The ATS-LD and ATS-LDLB strategies}\label{appendix: ATS-LDLB strategy}	
	\setcounter{equation}{0}
	
	This section presents the detail descriptions of two adaptive time-stepping algorithms, including the ATS-LD (Algorithm \ref{Adaptive-Time-Strategy-Delay}) and ATS-LDLB (Algorithm \ref{Adaptive-Time-Strategy-Delay-Recompute}) strategies.

	Our idea for the ATS-LD strategy is rather simple: in addition to using large time steps to effectively accelerate the simulation of large-scale, low-frequency periodic motion, the ATS-LD algorithm can maintain a relatively small time step (rather than frequently changing the step size) to accurately capture small-scale chaotic or high-frequency quasi-periodic  behaviors. Specifically, we incorporate a \textbf{local delay mechanism} that fixes the step size over a duration governed by the maximum delay steps $d_{\max}$, as detailed in Lines 11--22 of Algorithm \ref{Adaptive-Time-Strategy-Delay}. This mechanism suppresses the increase of step size even when the $L^2$ norm of the discrete time derivative, $\mynormt{\partial_{\tau}\omega^{n}}$, is non-increasing. Conversely, the algorithm remains responsive to rapid variations, allowing for immediate reduction of step size if $\mynormt{\partial_{\tau}\omega^{n}}$ exceeds  $\mynormt{\partial_{\tau}\omega^{n-1}}$ during the delay period (see Lines 7--9). The local delay process is exited only when the solution exhibits smooth variation--when the standard variance $\gamma$ of the $L^2$ norm sequence $\mynormt{\partial_{\tau}\omega^{n+1-j}}$ for $1\le j\le d_{\max}$, stored in the vector $\myvec{a}$, falls below the given tolerance $\gamma_{\mathrm{tol}}$, see Lines 15--20 of Algorithm \ref{Adaptive-Time-Strategy-Delay}.
	
	\begin{algorithm}[htb!]
		\caption{Adaptive time-stepping with local delay (ATS-LD) }
		\label{Adaptive-Time-Strategy-Delay}
		\begin{algorithmic}[1]
			\Require{Final time $T$; Minimum time step $\tau_{\min}$; Maximum time step $\tau_{\max}$; Adaptation sensitivity $\beta$;  Maximum step ratio $r^{*}$; Maximum delay steps $d_{\max}$; Variance tolerance $\gamma_{\mathrm{tol}}$ of $L^2$ norm.}
			\State Set delay counter $d \gets 1$, store vector $\myvec{a} \gets \myvec{0} \in \mathbb{R}^{d_{\max}}$
			\State Set initial state $\omega^0$; Set step index $n \gets 1$, step size $\tau_n \gets \tau_{\min}$, and current time $t_{n-1} \gets 0$;
			\State Set $L^2$ norm $\|\partial_{\tau}\omega^{n-1}\| \gets \infty$;
			
			\While  {$t_{n-1} < T$}
			\State Update the time $t_{n}\gets t_{n-1}+\tau_{n}$.
			\State  Run the IERK method to compute the numerical solution $\omega^{n}$; Compute $\|\partial_{\tau}\omega^n\|$; 
			\If{$\|\partial_{\tau}\omega^n\| > \|\partial_{\tau}\omega^{n-1}\|$}
			\State Update time step $\tau_{n+1}\gets \tau_{\text{ada}}$ by \eqref{def: adaptive step size};
			\State Set delay counter $d \gets 1$; Reset store vector $\myvec{a} \gets \myvec{0}$.
			\Else
			\State Record the $L^2$ norm by $\myvec{a} (d) \gets \mynormt{\partial_{\tau}\omega^{n}}$;
			\If{$d<d_{\max}$}
			\State Update time step $\tau_{n+1} \gets \tau_n$ and the delay counter $d \gets d+1$.
			\Else
			\State Compute the standard variance of time series $\myvec{a}$, $\gamma \gets \mathrm{Var} (\myvec{a})$.
			\If{$\gamma<\gamma_{\mathrm{tol}}$} 
			\State Compute $\tau_{\text{ada}}$ by \eqref{def: adaptive step size}; Update time step $\tau_{n+1}\gets \tau_{\text{ada}}$; 	
			\Else 
			\State  Update time step $\tau_{n+1}\gets \tau_n$.		
			\EndIf
			\State Reset the delay counter $d\gets 1$ and store vector $\myvec{a} \gets \myvec{0}$.					
			\EndIf
			\EndIf	
			\If	{$t_n+\tau_{n+1}>T$}
			\State Update time step $\tau_{n+1}\gets T-t_n$.
			\EndIf
			\State Update step index $n \gets n + 1$.
			\EndWhile
		\end{algorithmic}
	\end{algorithm}

	Note that the ATS-LD algorithm degenerates into the traditional adaptive stepping strategy by setting the maximum delay parameter $d_{\max}=1$. Not surprisingly, due to the local delay mechanism ($d_{\max}>1$), the adaptive step size would avoid abrupt increases when the solution changes steadily, instead exhibiting a staircase-like increment, where the plateau width is controlled by the maximum delay steps $d_{\max}$. In this sense, although the maximum delay $d_{\max}$ is primarily chosen  to accurately capture the high-frequency quasi-periodic motions, an excessively  large value of $d_{\max}$ would not be desirable for accelerating the simulation of low-frequency solution. 
	
	\begin{algorithm}[htb!]
		\caption{Adaptive time-stepping with local delay and local backtrack}
		\label{Adaptive-Time-Strategy-Delay-Recompute}
		\begin{algorithmic}[1]
			\Require{Final time $T$; Minimum time step $\tau_{\min}$; Maximum time step $\tau_{\max}$; Adaptation sensitivity $\beta$; Maximum step ratio $r^{*}$;  Maximum delay steps $d_{\max}$; Variance tolerance $\gamma_{\mathrm{{tol}}}$ of $L^2$ norm; Variation threshold $\beta_{\text{thr}}$ for local backtrack.}
			\State Set delay counter $d \gets 1$; Initialize store vector $\myvec{a} \gets \myvec{0} \in \mathbb{R}^{d_{\max}}$; Set change rate threshold $\|\partial_{\tau}\omega^{n-1}\| \gets \infty$;
			\State Set initial state $\omega^0$; Set step index $n \gets 1$, step size $\tau_n \gets \tau_{\min}$, and current time $t_{n-1} \gets 0$;
			
			\While  {$t_{n-1} < T$}
			\State Update the time $t_{n}\gets t_{n-1}+\tau_{n}$;
			\State  Run the IERK method to compute the numerical solution $\omega^{n}$; Compute $\|\partial_{\tau}\omega^n\|$;  
			\If{$\|\partial_{\tau}\omega^n\| > \|\partial_{\tau}\omega^{n-1}\|$}
			\State Reset the delay counter $d\gets 1$ and store vector $\myvec{a} \gets \myvec{0}$;
			\If{$\|\partial_{\tau}\omega^n\| > \beta_{\text{thr}}\|\partial_{\tau}\omega^{n-1}\|$ and $\tau_{n} > \tau_{\min}$}
			\State Update time step $\tau_{n} \gets \max\{\tau_{\min}, \tau_{n}/\beta_{\text{thr}}\}$;
			\State Continue;
			\EndIf
			\State Compute the adaptive time step $\tau_{\text{ada}}$ by \eqref{def: adaptive step size};
			\State Update time step $\tau_{n+1}\gets \tau_{\text{ada}}$;
			\Else
			\State Record the $L^2$ norm by $\myvec{a} (d) \gets \mynormt{\partial_{\tau}\omega^{n}}$;
			\If{$d<d_{\max}$}
			\State Update time step $\tau_{n+1} \gets \tau_n$ and the delay counter $d \gets d+1$.
			\Else
			\State Compute the variance $\gamma \gets \mathrm{Var} (\myvec{a}) $ of time series $\myvec{a}$.
			\If{$\gamma<\gamma_{\mathrm{{tol}}}$} 
			\State Compute $\tau_{\text{ada}}$ by \eqref{def: adaptive step size}; Update time step $\tau_{n+1}\gets \tau_{\text{ada}}$; 	
			\Else 
			\State  Update time step $\tau_{n+1}\gets \tau_n$.		
			\EndIf
			\State Reset the delay counter $d\gets 1$ and store vector $\myvec{a} \gets \myvec{0}$.					
			\EndIf
			\EndIf	
			\If	{$t_n+\tau_{n+1}>T$}
			\State Update time step $\tau_{n+1}\gets T-t_n$.
			\EndIf
			\State Update step index $n \gets n + 1$.
			\EndWhile
		\end{algorithmic}
	\end{algorithm}
	
	Our idea for the refinement in ATS-LDLB (Algorithm \ref{Adaptive-Time-Strategy-Delay-Recompute}) is to mitigate the potential lags of step reduction at the interface between steady and oscillatory regimes--most notably near the on-off points of external source term. It is to detect the abrupt changes of solution in the long-time dynamics and introduce backtrack process before the accumulation of large local errors. If the variation rate of vorticity at the current step significantly exceeds that at the previous step, $\|\partial_{\tau}\omega^n\| > \beta_{\text{thr}}\|\partial_{\tau}\omega^{n-1}\|$ for a prescribed threshold $\beta_{\text{thr}}$, the current step is deemed unreliable and rejected. The current step size $\tau_n$ is scaled down ($\tau_n \gets \tau_n /\beta_{\text{thr}}$), and the solution is recomputed using this refined step size, cf. Lines 8--11 of Algorithm \ref{Adaptive-Time-Strategy-Delay-Recompute}. This \textbf{local backtrack mechanism} ensures that the adaptive procedure can maintain highly responsive to sudden changes of external forcing or high-frequency fluctuations, thereby balancing computational efficiency and improving accuracy at critical transition points.


\begin{thebibliography}{99}
		
		
		
		
		\bibitem{ArchillaNovo:2022} 	
		{\sc B. Archilla and J. Novo}, 
		Robust error bounds for the Navier-Stokes equations using implicit-explicit second order BDF method with variable steps. 
		{\em IMA J. Numer. Anal.}, 43:5 (2023), pp. 2892--2933.
		
		\bibitem{AscherRuuthSpiteri:1997}
		{\sc U. Ascher, S. Ruuth and R.J. Spiteri},
		Implicit-explicit Runge-Kutta methods for time dependent partial differential equations,
		{\em Appl. Numer. Math.}, 25:2~3 (1997), pp. 151--167.
		
		
		
		
		\bibitem{BoscarinoPareschiRusso:2017}
		{\sc S. Boscarino, L. Pareschi and G. Russo},
		A unified IMEX Runge-Kutta approach for hyperbolic systems with multiscale relaxation,
		{\em SIAM J. Numer. Anal.}, 55:4 (2017), pp. 2085--2109.
		
		\bibitem{BoisneaultDubuisPicasso:2023}
		{\sc A. Boisneault, S. Dubuis and M. Picasso},
		An adaptive space-time algorithm for the incompressible Navier-Stokes equations,
		{\em J. Comput. Phys.}, 493 (2023), 112457.
		
		
		\bibitem{BoscarinoRusso:2009}
		{\sc S. Boscarino and G. Russo},
		On a class of uniformly accurate IMEX Runge-Kutta schemes and applications to hyperbolic systems with relaxation,
		{\em SIAM J. Sci. Comput.}, 31:3 (2009), pp. 1926--1945.
		
		
		
		\bibitem{CalvoFrutosNovo:2001}
		{\sc M.P. Calvo, J. De Frutos and J. Novo},
		Linearly implicit Runge-Kutta methods for advection-reaction-diffusion equations,
		{\em Appl. Numer. Math.}, 37:4 (2001), pp. 535--549.
		
		
		
		\bibitem{CardoneJackiewiczSanduZhang:2014MMA}
		{\sc A. Cardone, Z. Jackiewicz, A. Sandu and H. Zhang},
		Extrapolated implicit-explicit Runge-Kutta methods,
		{\em Math. Model. Anal.}, 19:1 (2014), pp. 18--43.
		
		
		
		\bibitem{ChenGunzburgerSunWang:2013}
		{\sc W.~Chen, M.~Gunzburger, D.~Sun, X.-M.~Wang},
		Efficient and long-time accurate second-order methods for the Stokes-Darcy system, 
		{\em SIAM J. Numer. Anal.}, 51:5 (2013), pp. 2563--2584.
		
		\bibitem{ChenGunzburgerSunWang:2016}
		{\sc W.~Chen, M.~Gunzburger, D.~Sun, X.-M.~Wang},
		An efficient and long-time accurate third-order algorithm for the {Stokes--Darcy system},
		{\em Numer. Math.}, 134:4 (2016), pp. 857--879.
		
		\bibitem{ChenHuangLiaoYi:2026cicp}
		{\sc Y. Chen, Y. Huang, H.-L. Liao and N. Yi},
		Energy behaviors of implicit-explicit Runge-Kutta methods in adaptive finite element simulations for the Cahn-Hilliard model,
		{\em Commun. Comput. Phys.}, 2026, to appear.
		
		\bibitem{ChengWangxiaoming:2008}
		{\sc W. Cheng and X.-M.~Wang},
		A semi-implicit scheme for stationary statistical properties of the infinite Prandtl number model,
		{\em SIAM J. Numer. Anal.}, 47:1 (2008), pp. 250--270.
		
		\bibitem{ChengWang:2016}
		{\sc K.~Cheng, C.~Wang},
		Long time stability of high order multistep numerical schemes for two-dimensional incompressible Navier-Stokes equations,
		{\em SIAM J. Numer. Anal.}, 54:5 (2016), pp. 3123--3144.
		
		\bibitem{ChengWangWiseYue:2016Weakly}
		{\sc K.~Cheng, C.~Wang, S.~Wise and X.~Yue},
		A second-order, weakly energy-stable pseudo-spectral scheme for the Cahn-Hilliard equation and its solution by the homogeneous linear iteration method,
		{\em J. Sci. Comput.}, 69:3 (2016), pp. 1083--1114.
		
		\bibitem{ChuWangWangZhang: 2023}
		{\sc T. Chu, J. Wang, N. Wang and Z. Zhang},
		Optimal-order convergence of a two-step BDF method for the Navier-Stokes equations with $H^1$ initial data,
		{\em J. Sci. Comput.}, 96:2 (2023), 62.
		
		\bibitem{ConstantinFoias:1988}
		{\sc P. Constantin and C. Foias},
		{Navier-Stokes equations},
		University of Chicago Press, New York, 1988.
		
		
		
		
		%
		%
		%
		%
		
		\bibitem{DeCariaSchneier:2021}
		{\sc V. DeCaria and  M. Schneier},
		An embedded variable step IMEX scheme for the incompressible Navier-Stokes equations,
		{\em Comput. Methods Appl. Mech. Eng.}, 376 (2021), 113661.
		
		
		\bibitem{DimarcoPareschi:2013}
		{\sc G. Dimarco and L. Pareschi},
		Asymptotic preserving implicit-explicit Runge-Kutta methods for nonlinear kinetic equations,
		{\em SIAM J. Numer. Anal.}, 51:2 (2013), pp. 1064--1087.
		
		
		
		
		
		
		
		
		
		
		
		
		\bibitem{FoiasManleyRosaTemam:2001}
		{\sc C. Foias, O. Manley, R. Rosa, and R. Temam},
		{Navier-Stokes equations and turbulence}, 
		Cambridge University Press, Cambridge, 2001.
		
		\bibitem{FoiasTemam:1998JFA}
		{\sc C. Foias and R. Temam}, 
		Gevrey class regularity for the solutions of the Navier-Stokes equations,
		{\em J. Funct. Anal.}, 87:2 (1989), pp. 359--369.
		
		
		
		
		
		\bibitem{FuTangYang:2024}
		{\sc Z. Fu, T. Tang and J. Yang}, 
		Energy diminishing implicit-explicit Runge-Kutta methods for gradient flows, 
		{\em Math. Comput.}, 93:350 (2024), pp. 2745--2767.
		
		
		
		
		
		
		\bibitem{GottliebToneWangWangWirosoetisno:2012}
		{\sc S.~Gottlieb, F.~Tone, C.~Wang, X.-M.~Wang, D.~Wirosoetisno},
		Long time stability of a classical efficient scheme for two-dimensional Navier--Stokes equations,
		{\em SIAM J. Numer. Anal.}, 50:1 (2012), pp. 126--150.
		
		\bibitem{GottliebWang:2012}
		{\sc S.~Gottlieb and C.~Wang},
		Stability and convergence analysis of fully discrete {Fourier} collocation spectral method for 3-D viscous Burgers' equation,
		{\em J. Sci. Comput.}, 53:2 (2012), pp. 102--128.
		
		
		
		
		\bibitem{GuesmiGrotteschiStiller:2023}
		{\sc M. Guesmi, M. Grotteschi and J. Stiller},
		Assessment of high-order IMEX methods for incompressible flow,
		{\em Int. J. Numer. Meth. Fluids}, 95:6 (2023), pp. 954--978.
		
		
		
		
		
		
		
		
		
		\bibitem{IzzoJackiewicz:2017}
		{\sc G. Izzo and Z. Jackiewicz},
		Highly stable implicit-explicit Runge-Kutta methods,
		{\em Appl. Numer. Math.}, 113 (2017), pp. 71-92.
		
		
		
		
		
		
		\bibitem{KanevskyCarpenterGottliebHesthaven:2007}
		{\sc A. Kanevsky, M.H. Carpenter, D. Gottlieb and J.S. Hesthaven},
		Application of implicit-explicit high order Runge-Kutta methods to discontinuous-Galerkin schemes,
		{\em J. Comput. Phys.}, 225:2 (2007), pp. 1753--1781.
		
		
		
		
		\bibitem{HillSuli:2000}
		{\sc A. T. Hill and E. S\"{u}li}, 
		Approximation of the global attractor for the incompressible Navier-Stokes
		equations, {\em IMA J. Numer. Anal.}, 20:4 (2000), pp. 633--667.
		
		%
		
		\bibitem{HornJohnson:2013book}
		{\sc R.A. Horn and C.R. Johnson}, Matrix Analysis, Second Edition, 
		Cambridge University Press, New York, 2013.
		
		
		%
		%
		%
		%
		
		
		\bibitem{KennedyCarpenter:2003}
		{\sc C.A. Kennedy and M.H. Carpenter}, 
		Additive Runge-Kutta schemes for convection-diffusion-reaction
		equations, {\em Appl. Numer. Math.}, 44:1~2 (2003), pp. 139--181.
		
		
		
		
		%
		
		\bibitem{LiLiao:2022}
		{\sc Z. Li and H.-L. Liao},
		{Stability of variable-step BDF2 and BDF3 methods},
		{\em SIAM J. Numer. Anal.}, 60:4 (2022), pp. 2253--2272.
		%
		%
		%
		
		
		
		\bibitem{LiaoTangWangZhou:2025R-IERK}
		{\sc H.-L. Liao, T. Tang, X.-P. Wang and T. Zhou},
		{A class of refined implicit-explicit Runge-Kutta methods with 
			robust time adaptability and unconditional convergence for the Cahn-Hilliard model}, 
		{\em Math. Comput.}, 2025, doi: 10.1090/mcom/4090.
		
		
		\bibitem{LiaoWangWen:2024JCP}
		{H.-L. Liao,  X.-P. Wang and C. Wen},
		{Original energy dissipation preserving corrections of 
			integrating factor Runge-Kutta
			methods for gradient flow problems}, 
		{\em J. Comput. Phys.},  2024, 519: 113456.
		
		\bibitem{LiaoWangWen:2024IERK}
		{\sc H.-L. Liao, X.-P. Wang and C. Wen},
		Average energy dissipation rates of additive implicit-explicit 
		Runge-Kutta methods for gradient flow problems, 	
		{\em CSIAM Trans. Appl. Math.},
		2026, to appear.
		
		\bibitem{LiaoZhang:2021}
		{\sc H.-L. Liao and Z. Zhang},
		{Analysis of adaptive BDF2 scheme for diffusion equations},
		{\em Math. Comput.}, 90:329 (2021), pp. 1207--1226.
		
		\bibitem{LiuZou:2006}
		{\sc H. Liu and J. Zou},
		{Some new additive Runge-Kutta methods and their applications},
		{\em J. Comput. Appl. Math.}, 190:1~2 (2006), pp. 74--98.
		
		\bibitem{LiuShu:2000}
		{\sc J.-G. Liu and C.-W. Shu}, A high-order discontinuous Galerkin method for 2D incompressible flows, {\em J. Comput. Phys.}, 160:2 (2000), pp. 577--596.
		
		
		
		
		
		%
		
		
		\bibitem{NguyenPeraireCockburn: 2011}
		{\sc N. C. Nguyen, J. Peraire and B. Cockburn}, 
		An implicit high-order hybridizable discontinuous Galerkin method for the incompressible Navier-Stokes equations, {\em J. Comput. Phys.}, 230:4 (2011), pp. 1147--1170.
		
		
		\bibitem{PareschiRusso:2005}
		{\sc L. Pareschi and G. Russo},
		Implicit-explicit Runge-Kutta schemes and applications to hyperbolic systems with relaxation,
		{\em J. Sci. Comput.}, 25:1~2 (2005), pp. 129--155.
		
		
		
		
		
		
		\bibitem{ShenTangWang:2011Spectral}
		{J.~Shen, T.~Tang and L.~Wang},
		\textit{Spectral methods: Algorithms, analysis and applications},
		Springer-Verlag, Berlin Heidelberg, 2011.
		
		
		
		
		
		
		\bibitem{StuartHumphries:1998}
		{\sc A. M. Stuart and A. R. Humphries},
		{Dynamical systems and numerical analysis},
		Cambridge University Press, New York, 1998.
		
		\bibitem{Tachim-Medjo:1996}
		{\sc T. Tachim-Medjo}, Navier-Stokes equations in the vorticity-velocity formulation: The two-dimensional
		case, {\em Appl. Numer. Math.}, 21:2 (1996), pp. 185--206.
		
		\bibitem{Temam:1997}
		{\sc R. Temam},
		{Infinite-dimensional dynamical systems in mechanics and physics}, 2nd ed., Springer-Verlag, New York, 1997.
		
		\bibitem{ToneWangWirosoetisno:2015}
		{\sc F. Tone, X.-M.~Wang, and D. Wirosoetisno},
		Long-time dynamics of 2d double-diffusive convection: analysis and/of numerics.
		{\em Numer. Math.}, 130:3 (2015), pp. 541--566.
		
		\bibitem{ToneWirosoetisno:2006}
		{\sc F. Tone and D. Wirosoetisno}, On the long-time stability of the implicit Euler scheme for the
		two-dimensional Navier-Stokes equations, {\em SIAM J. Numer. Anal.}, 44:1 (2006), pp. 29--40.
		
		
		\bibitem{WangWangMao:2022}
		{\sc X.-M.~Wang, Z. Wang and M. Mao}, Linearly implicit variable step-size BDF schemes with Fourier pseudospectral approximation for incompressible Navier-Stokes equations, {\em Appl. Numer. Math.}, 172 (2022), pp. 393--412.
		
		\bibitem{Wang:2010}
		{\sc X.-M.~Wang}, Approximation of stationary statistical properties of dissipative
		dynamical systems: time discretization, {\em Math. Comput.}, 79:269
		(2010), pp. 259--280.
		
		\bibitem{Wang:2012}
		{\sc X.-M.~Wang}, An efficient second order in time scheme for approximating long time statistical
		properties of the two dimensional Navier-Stokes equations, {\em Numer. Math.}, 121:4 (2012),
		pp. 753--779.
		
		\bibitem{Wang:2016}
		{\sc X.-M.~Wang}, Numerical algorithms for stationary statistical properties of
		dissipative dynamical systems, {\em Dis. Contin. Dyn. Sys.},
		36:8 (2016), pp. 4599--4618.
		
		
		
		\bibitem{WangZhaoLiao:2025}
		{\sc X.-P. Wang, X. Zhao and H.-L. Liao}, A unified framework on the original energy laws of three effective classes of Runge-Kutta methods for phase field crystal type models, {\em SIAM J. Numer. Anal.}, 63:4 (2025), pp. 1808--1832.
		
		
		
		%
		
		
		
		
		
		
		
		
		
		
	\end{thebibliography}

\begin{thebibliography}{99}
	
	\bibitem{AscherRuuthSpiteri:1997-SM}
	{\sc U. Ascher, S. Ruuth and R.J. Spiteri},
	Implicit-explicit Runge-Kutta methods for time dependent partial differential equations,
	{\em Appl. Numer. Math.}, 25:2~3 (1997), pp. 151--167.
	

	
\bibitem{GottliebToneWangWangWirosoetisno:2012-SM}
{\sc S.~Gottlieb, F.~Tone, C.~Wang, X.-M.~Wang, D.~Wirosoetisno},
Long time stability of a classical efficient scheme for two-dimensional Navier--Stokes equations,
{\em SIAM J. Numer. Anal.}, 50:1 (2012), pp. 126--150.

	\bibitem{IzzoJackiewicz:2017-SM}
{\sc G. Izzo and Z. Jackiewicz},
Highly stable implicit-explicit Runge-Kutta methods,
{\em Appl. Numer. Math.}, 113 (2017), pp. 71-92.

	\bibitem{LiaoWangWen:2024IERK-SM}
{\sc H.-L. Liao, X.-P. Wang and C. Wen},
Average energy dissipation rates of additive implicit-explicit 
Runge-Kutta methods for gradient flow problems, 	
{\em CSIAM Trans. Appl. Math.},
2026, to appear.
	
\end{thebibliography}
\end{document}